\def\RSsubtxt{section~}\newref{sub}{name = \RSsubtxt}}
\def\RSthmtxt{theorem~}\newref{thm}{name = \RSthmtxt}}
\def\RSlemtxt{lemma~}\newref{lem}{name = \RSlemtxt}}
\numberwithin{equation}{section}
\numberwithin{figure}{section}
  \theoremstyle{plain}
  \newtheorem*{thm*}{\protect\theoremname}
\theoremstyle{plain}
\newtheorem{thm}{\protect\theoremname}[section]
  \theoremstyle{plain}
  \newtheorem{prop}[thm]{\protect\propositionname}
  \theoremstyle{definition}
  \newtheorem{defn}[thm]{\protect\definitionname}
  \theoremstyle{plain}
  \newtheorem{lem}[thm]{\protect\lemmaname}
  \theoremstyle{plain}
  \newtheorem{cor}[thm]{\protect\corollaryname}
  \theoremstyle{remark}
  \newtheorem{rem}[thm]{\protect\remarkname}
  \newtheorem{example}[thm]{\protect\examplename}
  \providecommand{\corollaryname}{Corollary}
  \providecommand{\definitionname}{Definition}
  \providecommand{\lemmaname}{Lemma}
  \providecommand{\claimname}{Claim}
  \providecommand{\examplename}{Example}
  \providecommand{\propositionname}{Proposition}
  \providecommand{\remarkname}{Remark}
  \providecommand{\theoremname}{Theorem}
\providecommand{\theoremname}{Theorem}
\providecommand{\notename}{Note}
\providecommand{\acknowledgmentname}{Acknowledgment}
\newtheorem{convention}[thm]{Convention}
\begin{document}

\title{On generic $G$-graded Azumaya algebras}

\author{Eli Aljadeff \and Yaakov karasik}

\address{Department of Mathematics, Technion - Israel Institute of Technology,
Haifa 32000, Israel}

\address{Institut fur Algebra und Geometrie, KIT, Karlsruhe, Germany}

\email{aljadeff 'at' tx.technion.ac.il (E. Aljadeff),}

\email{igor.karasik 'at' kit.edu (Y. Karasik).}

\thanks{The first author was supported by the ISRAEL SCIENCE FOUNDATION (grant No 1516/16)}

\keywords{graded algebras, polynomial identities, verbally prime, graded division
algebras, Azumaya algebra.}
\begin{abstract}

Let $F$ be an algebraically closed field of characteristic zero and let $G$ be a finite group. Consider $G$-graded simple algebras $A$ which are finite dimensional and $e$-central over $F$, i.e. $Z(A)_{e} := Z(A)\cap A_{e} = F$. For any such algebra we construct a \textit{generic} $G$-graded algebra $\mathcal{U}$ which is \textit{Azumaya} in the following sense. $(1)$ \textit{$($Correspondence of ideals$)$}: There is one to one correspondence between the $G$-graded ideals of  $\mathcal{U}$ and the ideals of the ring $R$, the $e$-center of $\mathcal{U}$. $(2)$ \textit{Artin-Procesi condition}: $\mathcal{U}$ satisfies the $G$-graded identities of $A$ and no nonzero $G$-graded homomorphic image of  $\mathcal{U}$ satisfies properly more identities. $(3)$ \textit{Generic}: If $B$ is a $G$-graded algebra over a field then it is a specialization of $\mathcal{U}$ along an ideal $\mathfrak{a} \in spec(Z(\mathcal{U})_{e})$ if and only if it is a $G$-graded form of $A$ over its $e$-center.

We apply this to characterize finite dimensional $G$-graded simple algebras over $F$ that admit a $G$-graded division algebra form over their $e$-center.

\end{abstract}

\maketitle

\section{introduction}

Group gradings on finite dimensional algebras over a field $k$ and in particular on $k$-central simple algebras is key in the study of Brauer groups and division algebras. Indeed, the indispensable interpretation of $Br(k)$ in terms of Galois cohomology may be realized via a group gradings on $k$-central simple algebras. These are the well known \textit{crossed product} algebras. A rather different group grading, \textit{finer} than the crossed product, is the well known \textit{symbol algebra} construction which determines a group grading on $k$-central simple algebras with a group $H$ isomorphic to $\mathbb{Z}_{n} \times \mathbb{Z}_{n}$, where $\mathbb{Z}_{n}$ denotes the cyclic group of order $n$. This grading establishes the fundamental connection between $Br(k)$ and the $K$-group $K_{2}$. It will not be an exaggeration if we say that several of the main theorems and open questions in the theory of Brauer groups and division algebras involve the existence of a group grading on $k$-central simple algebras. To mention a few $(1)$ Merkurjev-Suslin's theorem on Brauer classes represented by products of symbol algebras $(2)$ Is every Brauer class represented by a $G$-crossed product where $G$ is abelian? $(3)$ Is every class represented by a product of cyclic algebras? $(4)$ Is every division algebra of prime index cyclic? $(5)$ Amitsur's theorem on noncrossed product division algebras $(6)$ Every division algebra over a local or global field is cyclic. For the reader convenience we recall below both constructions, namely the crossed product and the symbol algebra.

The geometric point of view in this context, namely the generic division algebra of degree $n$, the algebra of generic matrices or the generic $G$-crossed product where $G$ is a finite group, has been of great importance as well in the study of Brauer groups and division algebras. Here again we can mention Amitsur's example of noncrossed product division algebra, rationality questions on the center of the degree $n$ generic division algebra and topic which was studied in the last $20$ years or so, the essential dimension central simple algebras and in particular of the generic division algebra of degree $n$. Here are some well known references with some of the statements$/$open problems mentioned above (\cite{MerSus}, \cite{Jacobson}, \cite{Amitsur noncrossed products}, \cite{SERRE},\cite{Brussel}, \cite{EssDim}, \cite{Vishne etal problems}).

Our goal in this paper is to construct a generic $G$-graded algebra which is \linebreak Azumaya. In order to explain our terminology let $G$ be any finite group and let $F$ an algebraically closed field of characteristic zero. We let $A$ be a $G$-graded simple algebra and finite dimensional over its $e$-center $F$. By definition this means that $A\cdot A \neq 0$, $A$ has no nontrivial $G$-graded ideals and is finite dimensional over $F$, where $F$ is the intersection of $Z(A)$ and $A_{e}$, the center of $A$ and its $e$-component respectively. The following is the main result of the paper.

\begin{thm}\label{main theorem} Let $A$ be a $G$-graded simple algebra as above where $G$ is finite. Then there exists a $G$-graded algebra $\mathcal{U} = \mathcal{U}_{A}$ over a commutative ring $R$, the $e$-center of $\mathcal{U}$, with the following properties:

\begin{enumerate}

\item
The ring $R$ is an integral domain.

\item
The algebra $\mathcal{U}$ satisfies all graded identities of $A$, that is $Id_{G, F}(\mathcal{U}) \supseteq Id_{G, F}(A)$.

\item
If $\eta$ is a nonzero $G$-graded morphism defined on $\mathcal{U}$ and $Im(\eta)$ is its image, then $Id_{G, F}(Im(\eta)) \subseteq Id_{G, F}(A)$ and hence equality holds. We say that no nonzero $G$-graded homomorphic image of $\mathcal{U}$ satisfies more identities than $A$.

\item

There is a $1-1$ correspondence between the set of $2$-sided $G$-graded ideals of $\mathcal{U}$ and ideals of $R$. Moreover, the correspondence is given by the following maps which are inverses of each other
$$
I \rightarrow I\mathcal{U}
$$
$$
T \rightarrow T\cap R
$$
where I is an ideal of $R$ and $T$ is a $2$-sided $G$-graded ideal of $\mathcal{U}$.
\item

The algebra $\mathcal{U}$ is generic $($or representing$)$. This means the following: If $B$ is a $G$-graded algebra over a field then it is a specialization of $\mathcal{U}$ along an ideal $\mathfrak{a} \in spec(Z(\mathcal{U})_{e})$ if and only if it is a $G$-graded form of $A$ over its $e$-center. More precisely, this means:

\begin{enumerate}
\item
If $p$ is a prime ideal of $R$ then $\mathcal{U}/p\mathcal{U}$ is a $G$-graded form of $A$, that is, if $E$ is an algebraically closed field extending $R/pR$ and $F$, then $E \otimes_{R/pR} \mathcal{U}/p\mathcal{U} \cong E\otimes_{F}A$ as $G$-graded algebras.

\item
Conversely: If $C$, an algebra over a field, is a $G$-graded form of $A$ over $Z(C)_{e}$, then there exists a specialization $\mathcal{U}/p\mathcal{U}$ of $\mathcal{U}$ which extends to $C$ as $G$-graded algebras.

\end{enumerate}
\end{enumerate}

\end{thm}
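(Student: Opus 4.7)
The plan is to build $\mathcal{U}$ as a $G$-graded generic ring in the spirit of Procesi's generic matrices, and then to extract the Azumaya features by inverting values of graded central polynomials.

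First I would fix a homogeneous $F$-basis $\{a_{g,i}\}$ of $A$ and, for each positive integer $n$, commuting indeterminates $x^{(n)}_{g,i}$. Form the generic homogeneous elements
\[
\xi^{(n)}_g \;=\; \sum_i x^{(n)}_{g,i}\, a_{g,i} \;\in\; F[X] \otimes_F A,
\]
each of degree $g$ in the natural $G$-grading on $F[X]\otimes_F A$, and let $\mathcal{U}_0$ be the $F$-subalgebra generated by all the $\xi^{(n)}_g$. Since $F[X]\otimes_F A \hookrightarrow K\otimes_F A$ with $K = \mathrm{Frac}\,F[X]$, and $K\otimes_F A$ is a $G$-graded form of $A$ over $K$, item (2) is immediate: $\mathcal{U}_0$ satisfies $Id_{G,F}(A)$. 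The $e$-center $R_0 := Z(\mathcal{U}_0)\cap (\mathcal{U}_0)_e$ embeds into $F[X]$, hence is an integral domain, giving (1). Moreover, any graded identity of $\mathcal{U}_0$ is automatically a graded identity of $A$, since every homogeneous tuple of $A$ arises by specializing the $\xi^{(n)}_g$.

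The core of the argument is the Azumaya ideal correspondence (4). The plan is to take $\mathcal{U} = \mathcal{U}_0[S^{-1}]$, where $S \subseteq R_0$ is the multiplicative set generated by nonzero values of a sufficient supply of $G$-graded central polynomials for $A$. The existence of such polynomials---the graded analogue of the Formanek--Razmyslov theorem---provides multilinear graded polynomials that are not graded identities of $A$ but take all their values in the $e$-center; these are produced via the classification of finite-dimensional $G$-graded simple algebras (twisted group algebras combined with graded matrix algebras), or invoked from prior graded PI theory. The localization $R = R_0[S^{-1}]$ remains an integral domain, and $\mathcal{U}$ becomes a graded Azumaya algebra of constant rank $\dim_F A$ over $R$. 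The ideal correspondence then follows from the standard Azumaya argument: given a graded two-sided ideal $T$ of $\mathcal{U}$, evaluations of the central polynomials on tuples drawn from $T$ generate $T\cap R$ as an ideal of $R$, and $T = (T\cap R)\mathcal{U}$; conversely $I\mathcal{U}\cap R = I$ by faithful flatness.

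With (4) in hand, (5) follows from $e$-centrality: for $p \in spec(R)$, the quotient $\mathcal{U}/p\mathcal{U}$ is $G$-graded simple over $R/p$ and has the same graded identities as $A$, so after scalar extension to an algebraically closed field $E$ containing both $R/p$ and $F$ it becomes $G$-graded isomorphic to $E\otimes_F A$ by the classification of such algebras over algebraically closed fields. Conversely, given a form $C$ of $A$ over $Z(C)_e$, homogeneous generators of $C$ yield a $G$-graded morphism $\mathcal{U}\to C$ whose kernel, intersected with $R$, is the desired prime. Item (3) is then automatic: any nonzero image $\mathcal{U}/T$ surjects, via any minimal prime containing $T\cap R$, onto some $\mathcal{U}/p\mathcal{U}$ whose graded identities coincide with those of $A$, forcing $Id_{G,F}(\mathcal{U}/T)\subseteq Id_{G,F}(A)$. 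The principal obstacle throughout is (4): constructing the graded central polynomials and verifying the Azumaya-type localization, which is the PI-theoretic heart of the paper.
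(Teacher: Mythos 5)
Your overall architecture (relatively free algebra of $A$, localize at central elements, read off the Azumaya/ideal-correspondence properties) is the right one, but there are two genuine gaps. First, you build $\mathcal{U}$ inside $F[X]\otimes_F A$ with $F$ algebraically closed, so $\mathcal{U}$ is an $F$-algebra and every nonzero homomorphic image has $e$-center containing $F$. This kills part 5(b): a $G$-graded form $C$ of $A$ may live over a small field (e.g.\ a number field) not containing $F$, and then no graded morphism $\mathcal{U}\to C$ exists. The paper avoids this by first proving that $A$ and $Id_{G,F}(A)$ have a \emph{unique minimal field of definition} $k$ (a subfield of a cyclotomic extension of $\mathbb{Q}$, computed explicitly from the presentation $\{H,\alpha,\mathfrak{g}\}$ of $A$), and constructing $\mathcal{U}$ as a localization of $k\langle X_G\rangle/Id_{G,k}(A)$; since $k$ embeds in the $e$-center of every form $C$, the evaluation map onto homogeneous generators of $C$ makes sense. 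This is the content of the entire second section and is not recoverable from your construction over $F$.

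Second, localizing at ``nonzero values of a sufficient supply of graded central polynomials'' is not enough to force every fiber $\mathcal{U}/p\mathcal{U}$ to be a form of $A$ specifically, nor to get the Artin--Procesi condition (3). In the ungraded case a central polynomial alternating on $n^2$ variables automatically vanishes on $M_{n-1}(F)$, but in the graded case there exist non-isomorphic finite dimensional $G$-graded simple algebras $E$ with $\dim E_g=\dim A_g$ for all $g$ and $Id_G(E)\supsetneq Id_G(A)$ (different cocycle or different tuple $\mathfrak{g}$), on which a generic $e$-central polynomial of $A$ need not vanish. The paper has to prove (using the Aljadeff--Haile theorem that graded simple algebras are determined by their graded identities, together with the finiteness of the set of such $E$) that one can refine the central polynomial $f$ to an $e$-central nonidentity $m(X)$ of $A$ that is an identity of every such $E$; Kemer's graded representability theorem is then used to rule out arbitrary (not necessarily semisimple) images with strictly larger $T$-ideal. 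Without this step your identification of $\mathcal{U}/p\mathcal{U}$ with a form of $A$ in 5(a), and hence your derivation of (3), does not go through. A smaller but real issue of the same kind: the ``standard Azumaya argument'' gives the bijection between two-sided ideals and ideals of the full center $Z(\mathcal{U})$, whereas (4) asserts it for graded ideals against the $e$-center $R=Z(\mathcal{U})_e$; the paper needs $J(R)=0$, the graded Kaplansky theorem, and the fact that $R$ is an $R$-module direct summand of $Z(\mathcal{U})$ to pass from one to the other.
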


\begin{rem}
Note that if $G$ is the trivial group the theorem above just says that $\mathcal{U}$ is Azumaya over its center $R$. Indeed, this follows from the well known theorem of Artin and Procesi, which says that an algebra is Azumaya of rank $n$ over its center $R$ if and only if it satisfies the identities of $n \times n$-matrices and no nonzero homomorphic image of $\mathcal{U}$ satisfies the identities of $(n-1) \times (n-1)$-matrices.
\end{rem}

\begin{rem}
Note that we are not proving the generalization of Artin-Procesi's theorem for $G$-graded simple algebras. In particular we are not showing that any $G$-graded algebra which is Azumaya in the sense of Artin-Procesi has the $1-1$ correspondence property on ideals. We refer the reader to [\cite{AGPR}, Chapter $5$] for more on Azumaya algebras.
\end{rem}
In view of these remarks and the $2$nd and $3$rd statements in the theorem we make the following definition.

\begin{defn}
A $G$-graded algebra $\mathcal{U}$ is Azumaya in the sense of Artin-Procesi over its $e$-center $R$ if it satisfies the $G$-graded identities of a finite dimensional $G$-graded simple algebra and no nonzero $G$-graded homomorphic image satisfies properly more identities.
\end{defn}

Because we want the generic $G$-graded algebra $\mathcal{U}$ to specialize to \textit{every} $G$-graded form of $A$, whose $e$-center is an algebra over a field, it is important to have \textit{a unique minimal field of definition $k$ of $A$}. Furthermore, because the algebra $\mathcal{U}$ will be constructed via a suitable localization of a $G$-graded relatively free algebra $\mathcal{A} = k\langle X_{G}\rangle/Id_{G, k}(A)$, where $k\langle X_{G}\rangle$ is the free $G$-graded algebra over the field $k$ and $Id_{G, k}(A)$ is the $T$-ideal of $G$-graded identities of $A$ defined over $k$, we need to know the $T$-ideal $Id_{G, F}(A)$ is indeed defined over $k$. Here $X_{G} = \cup_{g\in G}X_{g}$ and $X_{g}$ is a countable set of variables indexed by $g\in G$. These two notions, namely fields of definition of $A$ and of $Id_{G, F}(A)$, are tightly related. It is easy to show that if $k$ is a field of definition of $A$ it is also a field of definition of $Id_{G, F}(A)$. A substantial part of the paper is devoted to showing the existence and an explicit construction of the unique minimal field of definition $k$ of the $G$-graded algebra $A$. We do this in two steps: Firstly, we construct explicitly a certain field $k$ which turns out to be a field of definition of $A$. Secondly, we show it is unique minimal by showing that if a field $L$ does not contain $k$ then there is an identity of $A$ which is\textit{ not} defined over $L$.

For future reference we record this in the following theorem.

\begin{thm}\label{field of definition theorem}
Let $A$ be a finite dimensional $G$-graded simple algebra as above. The following hold.

\begin{enumerate}

\item
There exists a unique minimal subfield $k$ of $F$ over which $A$ is defined, that is there is a $G$-graded simple algebra $A_{0}$, finite dimensional and $e$-central over $k$, such that $A_{0}\otimes_{k}F \cong A$ as $G$-graded algebras and if $L$ is a subfield of $F$ over which $A$ is defined then $L \geq k$. Furthermore, the field $k$ is contained in a finite cyclotomic extension of $\mathbb{Q}$.

\item
The field $k$ is also the unique minimal field of definition of the $T$-ideal of $G$-graded identities $Id_{G, F}(A)$.

\end{enumerate}
\end{thm}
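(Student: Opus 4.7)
The plan is to invoke the Bahturin--Sehgal--Zaicev classification of $G$-graded finite dimensional simple algebras over an algebraically closed field of characteristic zero: up to $G$-graded isomorphism, $A \cong F^{\alpha}[H] \otimes_F M_n(F)$, where $H \le G$ is a subgroup, $\alpha \in Z^2(H, F^{\times})$ is a $2$-cocycle encoding a fine $H$-grading on the twisted group algebra $F^{\alpha}[H]$, and $M_n(F)$ carries an elementary $G$-grading given by a tuple $\mathbf{g} = (g_1, \dots, g_n) \in G^n$. The graded isomorphism type of $A$ is then encoded in the triple $(H, [\alpha], \mathbf{g})$, up to the obvious equivalences (choice of cohomology representative and reordering of $\mathbf{g}$).

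For the existence in part (1), I construct $k$ explicitly. Since $H$ is finite, $H^2(H, F^{\times})$ is annihilated by $|H|$, so the class $[\alpha]$ admits a representative $\alpha_0$ whose values lie in the group $\mu_{|H|} \subset F^{\times}$ of $|H|$-th roots of unity. Let $k_0 \subseteq F$ be the subfield generated over $\mathbb{Q}$ by the values of $\alpha_0$; it sits inside $\mathbb{Q}(\zeta_{|H|})$. Define $A_0 := k_0^{\alpha_0}[H] \otimes_{k_0} M_n(k_0)$, $G$-graded via $\mathbf{g}$. One checks directly that $A_0 \otimes_{k_0} F \cong A$ as $G$-graded algebras, so $k_0$ is a field of definition of $A$ lying in a finite cyclotomic extension of $\mathbb{Q}$. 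The minimality of $k_0$ (and hence uniqueness, together with the fact that the intersection of two fields of definition is one, by faithfully flat descent) will be deduced from part (2), since any field of definition of $A$ is automatically a field of definition of its $T$-ideal of graded identities via multilinearization and flat base change.

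For part (2), the easy implication is that $k_0$ is a field of definition of the $T$-ideal: if $A$ is defined over $k_0$ via $A_0$ then $Id_{G, F}(A) = Id_{G, k_0}(A_0) \otimes_{k_0} F$ in the multilinear sense. The substantive direction, and the heart of the theorem, is minimality: if a subfield $L \subseteq F$ defines $Id_{G, F}(A)$, I must show $L \supseteq k_0$; equivalently, if $L \not\supseteq k_0$, exhibit a multilinear $G$-graded identity of $A$ whose coefficients are not all in $L$. The strategy is to use a Regev-style duality between the multilinear component of the relatively free $G$-graded algebra and the $G$-graded trace and central polynomials on $A$, in order to recover the cocycle values $\alpha_0(h, h')$ as explicit $\mathbb{Q}$-rational combinations of structure constants of $A$ that are detected by multilinear graded identities. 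Since the $\alpha_0(h, h')$ generate $k_0$ over $\mathbb{Q}$ by construction, any such $L$ must contain them all, so $L \supseteq k_0$, which yields both $k = k_0$ and the minimality in part (1).

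The main obstacle is precisely this reconstruction: extracting the cohomology class $[\alpha]$, together with the pair $(H, \mathbf{g})$, from the purely identity-theoretic datum $Id_{G, F}(A)$. This is where the $G$-graded central polynomial theory developed in the body of the paper must be combined with a careful analysis of multilinear graded polynomials modulo $Id_{G, F}(A)$, in the spirit of Razmyslov--Kemer trace arguments now adapted both to the graded setting and to the twisted group algebra factor of $A$; the graded simple structure of $A$ is what ultimately guarantees that the cocycle is visible at the level of identities rather than only at the level of $A$ itself.
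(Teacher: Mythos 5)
Your overall logical skeleton matches the paper's: show some explicit $k$ is a field of definition of $A$, note that a field of definition of $A$ is automatically one of $Id_{G,F}(A)$, and then prove minimality at the level of the $T$-ideal by exhibiting, for each $L\not\supseteq k$, a multilinear graded identity not defined over $L$. However, the field you propose is wrong, and this breaks both halves of the argument.

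First, the field $k_{0}=\mathbb{Q}(\alpha_{0}(h,h'))$ generated by the values of a chosen cocycle representative is not even a well-defined invariant of $A$ (different representatives of $[\alpha]$ generate different fields); the correct invariant at this stage is $\mathbb{Q}(\mu_{n})$ where $\mu_{n}=Im(\pi(\alpha))$ is the image of $[\alpha]$ under the Universal Coefficient map $H^{2}(H,F^{*})\to Hom(M(H),F^{*})$. More seriously, even $\mathbb{Q}(\mu_{n})$ is in general \emph{not} minimal. The paper's answer is $k=\mathbb{Q}(\mu_{n})^{S}$, where $S$ is the subgroup of $N_{G}(H)$ consisting of elements that both normalize $B_{\alpha}=\ker(\pi(\alpha))\subseteq M(H)$ and can be factored out of the multiset of $H$-cosets in $\mathfrak{g}$. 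The descent from $\mathbb{Q}(\mu_{n})$ to $\mathbb{Q}(\mu_{n})^{S}$ is achieved by an explicit twisted (crossed-product-like) form: one factors $\mathfrak{g}$ along $S$, writes $A\cong M_{r}(F)\otimes A_{S}$, and builds a $k$-form of $A_{S}$ generated by diagonal sums of conjugated basis elements together with permutation matrices $P_{\sigma}$ indexed by $Gal(\mathbb{Q}(\mu_{n})/k)$. This Galois-descent mechanism, which is the main content of the existence half, is entirely absent from your construction; your $A_{0}=k_{0}^{\alpha_{0}}[H]\otimes M_{n}(k_{0})$ only yields the (generally non-minimal) field $k_{0}\supseteq\mathbb{Q}(\mu_{n})\supseteq k$.

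Correspondingly, your plan for minimality cannot succeed as stated. You propose to recover the individual cocycle values $\alpha_{0}(h,h')$ from $Id_{G,F}(A)$ as $\mathbb{Q}$-rational combinations of structure constants, which would force any field of definition of the $T$-ideal to contain $k_{0}$. But the $T$-ideal is invariant under the simultaneous action of $S$ by conjugation on $H$ and by the induced Galois action on $\mu_{n}$ (this is exactly Lemma 2.22 of the paper), so it cannot distinguish $\alpha$ from $\alpha^{s}$ and hence cannot determine cocycle values beyond their $S$-orbit; whenever $S$ acts nontrivially on $\mathbb{Q}(\mu_{n})$ your reconstruction claim is false. The paper's actual minimality argument is a long explicit construction: an alternating Capelli-type polynomial over the $e$-component with designated variables, frames and bridges, refined by inserted $H$-variables and binomial-central polynomials $\beta_{\rho,d}$ attached to representatives in $K\setminus S$ so that nonvanishing permutations preserve the $S$-blocks, and finally the insertion of $q_{L}$ built from $m(X)=\prod_{s\in S}\beta_{\tau}(x_{h_{1}},\ldots,x_{h_{r}};s(\zeta_{n}))$, which is defined over $k=\mathbb{Q}(\mu_{n})^{S}$ but provably not over any $L$ with $L\cap\mathbb{Q}(\mu_{n})\not\supseteq k$. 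None of this machinery is replaceable by a generic Razmyslov--Kemer trace argument, precisely because the quantity being detected is a Galois-invariant of the cocycle, not the cocycle itself. Finally, note that uniqueness of a minimal field of definition of the algebra is itself not automatic (cf.\ the paper's Remark 1.6 on Meir's example, where the minimal field of definition of an algebra strictly exceeds that of its $T$-ideal); it is only the combination ``$k$ defines $A$'' plus ``no $L\not\supseteq k$ defines $Id_{G,F}(A)$'' that delivers both parts of the theorem simultaneously.
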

\begin{rem} \label{Ehud Meir Example}
In \cite{Ehud} Section $11$ there is an example of a family of $3$-dimensional nilpotent associative algebras $A_{a}$, parametrized by a parameter $a$. The algebra $A_a$ has then $\mathbb{Q}(a)$ as a unique minimal field of definition, while all the polynomial identities are generated by XYZ=0 and are therefore defined over $\mathbb{Q}$. The element $a$ may be any element $\neq 0,1$ of any extension field of $\mathbb{Q}$, and therefore in this case the minimal field of definition may be larger than the field of definition of the T-ideal of identities.
\end{rem}

In the Section $6$ we use our construction in order to characterize finite dimensional $G$-graded simple algebras $A$ that admit a graded division algebra form. These algebras can be characterized in terms of their graded presentation $P_{A}$ as given by Bahturin et. al. in \cite {BSZ} (or Section $2$ below) and also in terms of their $G$-graded $T$-ideal of identities. Let us explain our characterization in terms of identities and for that we recall Kemer's theory on verbally prime $T$-ideals. By definition a $T$-ideal $\Gamma$ is verbally prime if for any two $T$-ideals $I$ and $J$ such that $IJ \subseteq \Gamma$ then $I \subseteq \Gamma$ or $J \subseteq \Gamma$. Verbally prime $T$-ideals can be characterised also by means of polynomials, that is, $\Gamma$ is verbally prime if and only if for any two multilinear polynomials $p$ and $q$ with disjoint sets of variables such that $pq \in \Gamma$ then either $p \in \Gamma$ or $q \in \Gamma$. Kemer's theorem gives a concrete characterization of verbally prime $T$-ideals in terms of algebras $A$ where $\Gamma = Id(A)$ (see \cite{Kemer1984}). The theorem (which is well known) says that in case $\Gamma$ contains a Capelli polynomial then $\Gamma$ is verbally prime if and only if it is the $T$-ideal of identities of a matrix algebra $M_{n}(F)$, some $n$. In case $\Gamma$ contains no Capelli polynomial then $\Gamma$ is verbally prime if and only if it is the $T$-ideal of identities of $E(A)$, the Grassmann envelope of $A$, where $A$ is a finite dimensional $\mathbb{Z}_{2}$-graded simple algebra. Both definitions of verbally prime, namely in terms of $T$-ideals or polynomials, can be naturally extended to the $G$-graded setting where $G$ is a finite group. However, interestingly, the notions are not longer equivalent. It turns out that the definition in terms of polynomials is considerably stronger than the definition in terms of ideals. Consequently, the authors in \cite{AljKarasik} referred to the notion in terms of polynomials as strongly verbally prime. Here are the definitions.
\begin{defn}\label{definition verbally and strongly verbally}
Let $\Gamma$ be a $G$-graded $T$ ideal.
\begin{enumerate}
\item
We say $\Gamma$ is $G$-graded verbally prime if for any $G$-graded $T$-ideals $I$ and $J$, $IJ \subseteq \Gamma \Rightarrow$ either $I \subseteq \Gamma$ or $J \subseteq \Gamma$.
\item
We say $\Gamma$ is $G$-graded strongly verbally prime if for any $G$-homogeneous, multilinear polynomials $p$ and $q$ on disjoint sets of variables, $pq \in \Gamma \Rightarrow$ $p \in \Gamma$ or $q \in \Gamma$.
\end{enumerate}
\end{defn}
For $G$-graded verbally prime $T$-ideals the theory is similar to Kemer's. See \cite{AljKarasik}, Theorem $1.6$ for a complete characterization.
In case $\Gamma$ contains a Capelli polynomial, the characterization says that $\Gamma$ is $G$-graded verbally prime if and only if $\Gamma = Id_{G}(A)$ where $A$ is a finite dimensional $G$-graded simple algebra over $F$, an algebraically closed field. As for $T$-ideals $\Gamma$ that are $G$-graded strongly verbally prime, a characterization in terms of algebras is known only for $T$-ideals $\Gamma$ which contain a Capelli polynomial (or equivalently by \cite{AB}, $T$-ideals of identities of finite dimensional $G$-graded algebras). One such characterization is the following: A $G$-graded $T$-ideal $\Gamma$ containing a Capelli polynomial is strongly verbally prime if and only if $\Gamma = Id_{G}(A)$ where $A$ is a finite dimensional $G$-graded simple algebra which admits a $G$-graded division algebra form over a field $k$, and $k$ contains an algebraically closed field $F$.

It turns out, in contrast to trivially graded matrix algebras, that finite dimensional $G$-graded simple algebras may admit a $G$-graded division algebra form but at the same time may not admit $G$-graded division algebra forms over fields $k$ which contain $F$. In Theorem \ref{essentially verbally prime} we characterize $G$-graded $T$-ideals of identities $\Gamma = Id_{G}(A)$ where $A$ is finite dimensional $G$-graded simple which admits a $G$-graded division algebra form. These are precisely the $G$-graded $T$-ideals $\Gamma$ which contain a Capelli polynomial and are \textit{essentially verbally prime}.
\begin{defn}\label{essentially verbally prime definition}
We say a $G$-graded $T$ ideal $\Gamma$ is \textit{essentially verbally prime} if \linebreak $pq \in \Gamma \Rightarrow p \in \Gamma$ or $q \in \Gamma$ whenever $p$ and $q$ are $G$-homogeneous multilinear polynomials on disjoint sets of variables whose coefficients are in $k$, the unique minimal field of definition of $\Gamma$.
\end{defn}
For future reference we have
\begin{thm}\label{essentially verbally prime}
Let $A$ be a finite dimensional $G$-graded simple algebra over its $e$-center $F$. Then $A$ admits a $G$-graded division algebra form over its $e$-center if and only if $\Gamma = Id_{G}(A)$ is essentially verbally prime.
\end{thm}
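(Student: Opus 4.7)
The plan is to prove the two directions separately, leveraging the generic algebra $\mathcal{U}$ of Theorem \ref{main theorem} and the unique minimal field of definition $k$ of Theorem \ref{field of definition theorem}.

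For the forward direction, suppose $A$ admits a $G$-graded division algebra form $D$ over its $e$-center $\ell$. Since $\ell$ is a field of definition for $A$ and $k$ is minimal, $k$ embeds in $\ell$, and a base-change to a common algebraically closed extension of $F$ and $\ell$ shows that a $k$-coefficient polynomial is an identity of $A$ iff it is an identity of $D$. Given $G$-homogeneous multilinear polynomials $p, q$ on disjoint variables with $k$-coefficients and $pq \in \Gamma$, if neither were in $\Gamma$ then neither would vanish identically on $D$, so I could choose homogeneous substitutions making $p$ and $q$ evaluate to nonzero homogeneous elements of $D$. Since the variable sets are disjoint the substitutions are independent; the corresponding evaluation of $pq$ is a product of two nonzero homogeneous elements of the graded division algebra $D$, hence nonzero, contradicting $pq \in \Gamma$.

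For the backward direction, suppose $\Gamma$ is essentially verbally prime. The candidate form is $D := K \otimes_R \mathcal{U}$, where $K$ is the fraction field of the $e$-center $R$ of $\mathcal{U}$ (a field since $R$ is a domain by Theorem \ref{main theorem}(1)). Property (5a) of Theorem \ref{main theorem} applied to the prime $(0)$ identifies $D$ as a $G$-graded form of $A$ over $K$, and the task is to show $D$ is itself graded division. Since $D$ is a finite-dimensional $G$-graded simple algebra over $K$, the graded Wedderburn theorem gives $D \cong M_n(\Delta)$ for a $G$-graded division algebra $\Delta$ and some $n \geq 1$, and I need $n = 1$, equivalently that $D_e = K \otimes_R \mathcal{U}_e$ is a division ring. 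Since $D_e$ is finite-dimensional over $K$, this reduces to showing that $\mathcal{U}_e$ is a domain: taking hypothetical nonzero $u, v \in \mathcal{U}_e$ with $uv = 0$ and representing them by polynomials $f, h \in k\langle X_G\rangle$ of $G$-degree $e$ with $f, h \notin Id_{G,k}(A)$, one uses multilinearization (valid in characteristic zero) together with careful relabeling of generic variables to produce a contradiction with the essentially verbally prime hypothesis.

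The main obstacle is precisely this last step. The elements $u, v$ may be represented by polynomials $f, h$ sharing variables in $\mathcal{U}$, and the fact that $fh \in Id_{G,k}(A)$ for shared-variable $fh$ does not in general imply $f(X)h(Y) \in Id_{G,k}(A)$ for fresh disjoint $X, Y$, which is the hypothesis directly controlled by essentially verbally primeness. Making the reduction rigorous will require the explicit construction of $\mathcal{U}$ as a suitable localization of the relatively free algebra $\mathcal{A} = k\langle X_G\rangle / Id_{G,k}(A)$, the fine structure of $G$-graded simple algebras from \cite{BSZ}, and essential use of the fact that $k$ is the minimal field of definition, so that all polynomial representatives of elements of $\mathcal{U}$ naturally have $k$-coefficients and essentially verbally primeness applies exactly where needed.
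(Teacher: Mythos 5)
Your forward direction is correct and is essentially the paper's argument: since the $e$-center of the division algebra form contains the minimal field of definition $k$, multilinear $k$-nonidentities of $A$ on disjoint variable sets evaluate to invertible homogeneous elements of the form, so their product cannot vanish.

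The backward direction, however, has a genuine gap, and it is exactly the one you flag yourself: you never actually derive that $\mathcal{U}_{e}$ (equivalently $\mathcal{A}_{e}$, the $e$-component of the relatively free algebra over $k$) is a domain. Zero-divisors $u,v\in\mathcal{A}_{e}$ are classes of polynomials $f,h$ that in general share variables, and ``essentially verbally prime'' only controls $f(X)h(Y)\in\Gamma$ for \emph{disjoint} fresh variable sets; since $\Gamma$ is a $T$-ideal one gets $f(X)h(Y)\in\Gamma\Rightarrow f(X)h(X)\in\Gamma$ by substitution but not the converse, so no amount of relabeling or multilinearization alone closes this. The paper does not attempt your direct route. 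Instead it interpolates two further equivalent conditions: essentially verbally prime $\Rightarrow$ $\mathbb{Q}$-strongly verbally prime $\Rightarrow$ a structural condition on the presentation $P_{A}=\{H,\alpha,\mathfrak{g}\}$ (namely $G=S$: $H$ is normal, all $H$-cosets appear in $\mathfrak{g}$ with equal frequency, and $G$ normalizes $B_{\alpha}$), the second implication being proved contrapositively by exhibiting explicit $\mathbb{Q}$-rational zero divisors built from Capelli-type alternating polynomials and binomial identities of $F^{\alpha}H$. Only then does it prove $\mathcal{A}_{e}$ is a domain, and it does so from the structural condition $G=S$ rather than from verbal primeness: after reducing to multihomogeneous and then multilinear polynomials, it establishes the path property for pure $k$-polynomials (Proposition \ref{evaluations on the algebra}: a nonidentity hits every diagonal $e$-block), and concludes by a Zariski-density argument that finds a single common evaluation making both factors nonzero in a one-dimensional block; this handles shared variables, works first when each coset representative appears once ($d=1$), and the general case is settled by tensoring with a generic division algebra of index $d$. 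Your outline for $4\Rightarrow 1$ (via $\mathcal{U}$, the fraction field of $R$, and graded Wedderburn) is a sound frame, but the substantive content of the theorem lives precisely in the step you leave open.
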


Notions as $G$-graded Azumaya algebra, $G$-graded division algebra or $G$-graded Brauer group appear in the literature. This goes back to the fundamental paper of C.T.C. Wall on $\mathbb{Z}_{2}$-graded Brauer group followed with the work of C. Small and others. In these articles the authors require the group $G$ to be abelian, an assumption that seems to be essential in order to define a graded tensor product on Brauer classes (\cite{WALL}, \cite{Small}, \cite{Long}, \cite{Childs}, \cite{BereleRowen}, \cite{ElduqueKotchetov1}). The case where the group $G$ is not necessarily abelian is considered in structural and classification problems of $G$-graded simple algebras and in particular of $G$-graded division algebras (\cite{Karrer}, \cite{ElduqueKochetov}, \cite{BalbaMikhalev}, \cite{BahturinZaicev1}, \cite{BahturinZaicev2}, \cite{BEK}, \cite{ElduqueKotchetov1}). In this article we do not discuss the classification of $G$-graded division algebras nor (in case $G$ is an arbitrary finite group) the possibility to introduce an algebraic structure on equivalence classes of $G$-graded Azumaya algebras over a commutative ring $R$.

Here is the structure of the paper. The $2$nd section is devoted to the explicit determination of the unique minimal field of definition $k$ of the $G$-graded simple algebra $A$ and of its $T$-ideal of $G$-graded identities $Id_{G, F}(A)$ (Theorem \ref{field of definition theorem}). This allows us to construct the corresponding relatively free algebra $\mathcal{A}$ of $A$ over $k$. In the $3$rd section we show the algebra $\mathcal{A}$ can be localized by a suitable $e$-central polynomial $f$ so that the localized algebra $\mathcal{U} = f^{-1}\mathcal{A}$ is Azumaya in the sense of Artin-Procesi. In the $4$th section we show $\mathcal{U}$ satisfies the \textit{$1-1$ correspondence of ideals} condition. In the $5$th section we show $\mathcal{U}$ is generic and finally, in the $6$th section, we apply the construction of $\mathcal{U}$ in order to characterize $G$-graded simple algebras $A$ over $F$ which admit a $G$-graded division algebra form.

\section{Field of definition}

Let $A$ be a $G$-graded simple algebra over its $e$-center $F$ where $F$ is an algebraically closed field of characteristic zero. Let $\Gamma = Id_{G,F}(A)$ be the corresponding $T$-ideal of $G$-graded identities of $F\langle X_{G} \rangle$.

\begin{defn}
We say the $G$-graded $F$ algebra $A$ is defined over a field $k$ if there exists a $G$-graded $k$-algebra $A_{0}$, where $k$ is the $e$-center of $A_{0}$, such that $A_{0} \otimes_{k}F \cong A$ as $G$-graded algebras. Similarly, we say that $\Gamma$ is defined over a subfield $k$ of $F$ if there is a $T$-ideal $\Gamma_{0}$ of $k\langle X_{G} \rangle$ such that $\Gamma_{0}\otimes_{k}F = \Gamma$.
\end{defn}
Explicitly, an identity $p \in \Gamma = Id_{G,F}(A)$ is defined over a subfield $k$ if $p = \sum_{i}\gamma_{i}p_{i}$, where $\gamma_{i} \in F$ and the $p_{i}$'s are identities of $A$, with coefficients in $k$. Note that if $p$ admits such a decomposition we may assume $\gamma_{i}$ are linearly independent over $k$.
The following lemma is basic.

\begin{lem}\label{field of definition for algebra is also for identities}

If a $($$G$-graded$)$ $F$-algebra $A$ is defined over $k$, then the $T$-ideal of $($$G$-graded$)$ $F$-identities of $A$ is defined over $k$ as well.

\end{lem}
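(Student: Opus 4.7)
The plan is to take the natural candidate $\Gamma_0 := Id_{G,k}(A_0)$ and show that $\Gamma_0 \otimes_k F = \Gamma$, where $A_0$ is the $k$-form with $A_0\otimes_k F \cong A$ as $G$-graded algebras.

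First I would establish the easy inclusion $\Gamma_0 \otimes_k F \subseteq \Gamma$: any $G$-graded identity of $A_0$ with coefficients in $k$ remains an identity after the base change $A_0 \rightsquigarrow A_0\otimes_k F \cong A$, since homogeneous evaluations on $A$ can be $F$-linearly expanded in terms of homogeneous evaluations on $A_0$ via the grading-preserving embedding $A_0 \hookrightarrow A$.

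For the reverse inclusion, I would use two standard reductions. Because $\mathrm{char}(F)=0$, the $T$-ideal $\Gamma$ is generated by its multilinear $G$-homogeneous elements, and the same holds for $\Gamma_0$; therefore it suffices to prove that every multilinear $G$-homogeneous $p \in \Gamma$ lies in $\Gamma_0 \otimes_k F$. Fix a $k$-basis $\{\lambda_j\}_{j\in J}$ of $F$ with, say, $\lambda_{j_0}=1$, and write $p = \sum_j \lambda_j p_j$ uniquely with $p_j \in k\langle X_G\rangle$, each $p_j$ multilinear and $G$-homogeneous of the same multidegree as $p$. The aim is to show $p_j \in \Gamma_0$ for every $j$, which then gives $p = \sum_j \lambda_j p_j \in \Gamma_0\otimes_k F$.

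To prove this, evaluate $p$ on homogeneous elements $a_1,\dots,a_n \in A_0$ of the appropriate degrees. Since the $p_j$ have coefficients in $k$ and $A_0$ is a $k$-algebra, each $p_j(a_1,\dots,a_n) \in A_0$. The identification $A = A_0 \otimes_k F = \bigoplus_{j \in J} \lambda_j\, A_0$ as a $k$-module then gives
\[
0 = p(a_1,\dots,a_n) = \sum_{j} \lambda_j\, p_j(a_1,\dots,a_n),
\]
and the uniqueness of the expansion forces $p_j(a_1,\dots,a_n) = 0$ for every $j$. Since multilinear homogeneous identities of a $G$-graded algebra are detected by homogeneous substitutions, and since homogeneous components of $A$ of degree $g$ are spanned over $F$ by $(A_0)_g$, the identity $p_j \equiv 0$ on homogeneous elements of $A_0$ in turn forces $p_j \in Id_{G,k}(A_0) = \Gamma_0$. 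This completes the reverse inclusion, so $\Gamma = \Gamma_0 \otimes_k F$ as required.

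The only mildly delicate point is the multilinearization step in the $G$-graded setting; this is routine once one recalls that in characteristic zero every $G$-graded $T$-ideal is generated by its multilinear $G$-homogeneous elements, so no real obstacle arises.
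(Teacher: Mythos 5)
Your proof is correct and follows essentially the same route as the paper's: write the identity as $\sum_j \lambda_j p_j$ over $k$-linearly independent scalars, evaluate on the $k$-form $A_0$ inside $A_0\otimes_k F=\bigoplus_j\lambda_j A_0$, and use linear independence to force each $p_j$ to vanish on $A_0$. You merely make explicit the multilinear reduction and the easy inclusion $\Gamma_0\otimes_k F\subseteq\Gamma$, which the paper leaves implicit.
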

\begin{proof}

Let $A\cong A_{0}\otimes_{k}F$ for some $G$-graded $k$-algebra $A_{0}$.
If we write an identity of $A$ with coefficients in $F$ as a linear combination
$$
\gamma_1\times p_{1} +\gamma_2\times p_{2} + \cdots + \gamma_n\times p_{n}
$$
where the polynomials $p_{i}$ have coefficient in $k$ and the $\gamma_{i}$'s are linearly independent over $k$, then evaluating on $A_{0}$ we obtain a sum
$$
\gamma_1\otimes b_1 + \gamma_2\otimes b_2 + \cdots + \gamma_n\otimes b_{n}
$$
in $F\otimes_{k}A_{0}$. Because the $\gamma_{i}$'s are linearly independent over $k$ all $b_{i}$'s must be zero.

\end{proof}

This simple observation allows us to show that certain algebras $A$ do not admit a form over some field $k_{0}$, that is there is no $B = B_{k_{0}}$ over $k_{0}$ such that $A = B_{F} = B_{k_{0}}\otimes_{k_{0}}F$. The idea is to exhibit a multilinear polynomial $f$ with coefficients in $F$ which on the one hand it is an identity of $A$, and on the other hand if it is written as a sum $f = \gamma_1\times p_{1} +\gamma_2\times p_{2} + \cdots + \gamma_n\times p_{n}$
in $F\otimes_{k_{0}}k_{0}\langle X_{G} \rangle$ where the $\gamma_{i}$'s are linearly independent over $k_{0}$, then at least one $p_{i}$ is a nonidentity on $A$. Indeed, this will show that such a $B$ does not exist for if it did, $f$ would clearly be an identity of $B$ and hence every $p_{i}$ would be an identity of $B$. Then by multilinearity, every $p_{i}$ would be an identity of $A$.

Before presenting the construction of the unique minimal field of definition of $A$ and $Id_{G,F}(A)$ we recall some basic facts about finite dimensional $G$-graded simple algebras over an algebraically closed field $F$ of characteristic zero. We shall assume $G$ is finite although some of the constructions hold also for infinite groups.

If $A$ is finite dimensional $G$-graded simple it admits a presentation \linebreak $P_{A}= \{H, \alpha, \mathfrak{g} = (g_{1},\ldots,g_{n})\}$ where $H$ is a subgroup of $G$, $\alpha$ is a $2$-cocycle of $H$ with coefficients in $F^{*}$ and the $n$-tuple $\mathfrak{g} = (g_{1},\ldots,g_{n})$ consists of elements, possibly with repetitions, taken from a fixed right transversal of $H$ in $G$ which we denote by $T$. With this presentation
$$A \cong F^{\alpha}H \otimes_{F} M_{n}{(F)}$$ and $A_{g} = Span_{F}\{u_{h} \otimes e_{i,j}: g = g_{i}^{-1}hg_{j}\}$, where $g \in G$. Here, $A_{g}$ denotes the $g$-homogeneous component of $A$. In particular, the presentation $P_{A}$ determines the isomorphism type of the $G$-graded algebra $A$ (see \cite{BSZ} for details).

We recall also that a $G$-graded simple algebra $A$ may admit different presentations $P_{A}$. In fact there exists a set of admissible (or basic) moves on presentations, such that on the one hand two presentations that are obtained one from the other by means of an admissible move represent $G$-graded isomorphic algebras and on the other hand every two presentations that represent $G$-graded isomorphic algebras are obtained one from the other by means of a finite sequence of admissible moves. The admissible moves are classified into three \textit{type}, referred as moves of type $I$, $II$ and $III$ (see \cite{AljHaile}). For the reader convenience we recall them here.

\textit{Type} $I$: Multiplying an element in the tuple $\mathfrak{g}$ by elements of $H$ on the left.

\textit{Type} $II$: Permuting the elements of the tuple $\mathfrak{g}$.

\textit{Type} $III$: Multiplying the tuple $\mathfrak{g}$ by some $g \in G$, replacing $H$ by $H^{g}= gHg^{-1}$ and replacing the $2$-cocycle $\alpha$ by $\alpha^{g}$ where $\alpha^{g}(h_{1}, h_{2}) = \alpha(g^{-1}h_{1}g, g^{-1}h_{2}g)$.

By means of the basic moves we may normalize the tuple $\mathfrak{g}$ in the presentation $P_{A}= \{H, \alpha, \mathfrak{g} = (g_{1},\ldots,g_{n})\}$ so that it satisfies the following conditions:

\begin{enumerate}
\item
The element $e$ appears in the tuple $\mathfrak{g}$ and with maximal number of repetitions.

\item
Equal representatives are adjacent to each other in $\mathfrak{g}$.

\item

Representatives that normalize $H$ appear earlier in $\mathfrak{g}$.
\end{enumerate}

Before we proceed with the construction of the minimal field of definition let us make the following convention.

\begin{convention}
Given a $G$-simple algebra $A$ we shall always fix a presentation $P_{A}$ and a basis of homogeneous elements of the form $u_{h} \otimes e_{i,j}$. All polynomials in $F\langle X_{G}\rangle$ we consider will be multilinear and all their evaluations on $A$ will be on elements of the fixed basis.
\end{convention}

Let $N_{G}(H)$ denote the normalizer of $H$ in $G$. Note that any element of $N_{G}(H)$ acts on $M(H)$, the Schur multiplier of $H$.
For the given $[\alpha] \in H^{2}(H, F^{*})$, we denote by $\pi(\alpha)$ (rather than $\pi([\alpha])$) the image of $[\alpha]$ by the (surjective) map induced by the Universal Coefficient Theorem
$$
\pi: H^{2}(H, F^{*})\rightarrow Hom(M(H), F^{*}).
$$
and let $B_{\alpha} = ker(\pi(\alpha))\subseteq M(H)$.

We denote by $N$ the elements of $N_{G}(H)$ which normalize $B_{\alpha}$. We see that if the group $\mu_{n}$ of all $n$th roots of unity is precisely the image of $\pi(\alpha)$ in $F^{*}$, from the isomorphism of $M(H)/B_{\alpha} \cong \mu_{n}$ we obtain an induced action of $N$ on $\mu_{n}$ and hence on $\mathbb{Q}(\mu_{n})$. As we shall see below, the minimal field of definition of $A$ is a field $k$ with $\mathbb{Q}(\mu_{n})^{N} \leq k \leq \mathbb{Q}(\mu_{n})$.

We now come to an important lemma which characterizes subgroups of $N_{G}(H)$ that can be \textit{factored out} from the $n$-tuple $\mathfrak{g}$ of $H$-right cosets representatives. A close inspection on the moves of type $I,II,III$, shows that moves of type $I$ or $II$ on $P_{A}$ do not change the subgroup $H$. Furthermore, the subgroup $H$ is not changed also if we apply a move of type $III$ as long as $g \in N_{G}(H)$.

Let $\Lambda = \Lambda_{P_{A}}$ be the multiset (with repetitions) consisting of the $H$-right cosets represented by the elements of $\mathfrak{g} \in P_{A}$. We see that if we apply moves of type $I$ and $II$ on $P_{A}$, the multiset $\Lambda$ remains invariant. Applying a move of type $III$ with an element $g \in N_{G}(H)$ sends right $H$-cosets to right $H$-cosets, however it may change the multiset $\Lambda$. We therefore consider the family $\Omega_{G,H,n}$ of multisets of cardinality $n$ of $H$-right cosets in $G$. Clearly $\Lambda \in \Omega_{G,H,n}$. Multiplication on the left
$$
Hb \mapsto gHb = Hgb
$$
of the right $H$-cosets appearing in $P \in \Omega_{G,H,n}$ determines an action of $N_{G}(H)$ on $\Omega_{G,H,n}$ and we let $K$ be the subgroup of $G$ which fixes $\Lambda$. We say the elements $g \in K$ normalize $\Lambda$ and write $g \times \Lambda = \Lambda$. We have $H \unlhd K \leq N_{G}(H) \leq G$.

In order to have a better insight on the group $K \leq N_{G}(H)$ let us present two additional characterizations of it. Let $U$ be a subgroup of $N_{G}(H)$ which contains $H$.

\begin{defn}
We say $U$ has the \textit{equal repetition property} (or the \textit{equal frequency property}) in $\Lambda$ if any two $H$-cosets in the tuple $\Lambda$ that determine the same $U$-coset, they appear the same number of times in $\Lambda$.  So in fact for such $U$ we have that all $H$-cosets contained in $U$ are represented the same number of times in $\Lambda$.
\end{defn}

Next, we introduce the \textit{factorization condition}. As above we let $H \leq U \leq N_{G}(H)$.

\begin{defn} We say $U$ can be \textit{factored out} from $\Lambda$ (or  equivalently the multiset $\Lambda$ can be factored out along $U$) if the following hold: If $\mathfrak{U} = \{e, u_{i_2}, \ldots, u_{i_p}\}$  is a transversal of $[U:H]$, then $\Lambda$ can be expressed as
$$
\{e, u_{i_2}, \ldots, u_{i_p}\}\times \Lambda_{0}
$$
where $\Lambda_{0}$ is a multiset of $H$-right cosets.

\end{defn}
Note that if the factorization condition holds for some transversal of $[U:H]$ then it holds for every transversal.
\begin{example}
Let $G \cong D_{6} = \{e, \sigma, \sigma^{2}, \tau, \sigma\tau, \sigma^{2}\tau \}$, $H = \{e\}$ and
$$\mathfrak{g} = (e, e, \tau, \tau, \sigma, \sigma, \sigma^{2}\tau,  \sigma^{2}\tau) =  (e, \tau)\times (e, e, \sigma, \sigma^{2}\tau).$$
It is clear that left multiplication by $\tau$ preserves the tuple whereas multiplication on the left by $\sigma^{2}\tau$ does not.
\end{example}
We have

\begin{lem}\label{three equivalent conditions}
Notation as above. For a group $U = \{e,u_{2},\ldots,u_{q}\}$ with $H \leq U \leq N_{G}(H)$ the following conditions are equivalent.
\begin{enumerate}
\item
$U$ normalizes $\Lambda$ via left multiplication.

\item

$U$ satisfies the equal frequency property.

\item

The multiset $\Lambda$ can be factored along $U$.
\end{enumerate}
\end{lem}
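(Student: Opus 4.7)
The plan is to prove the equivalences in the cyclic order $(1)\Rightarrow(2)\Rightarrow(3)\Rightarrow(1)$, exploiting the fact that since $H\unlhd U\leq N_G(H)$, left multiplication by any $u\in U$ carries right $H$-cosets to right $H$-cosets (because $uH=Hu$), so there is a well-defined left action of $U$ on the set of right $H$-cosets. Moreover, two right $H$-cosets $Hb_1,Hb_2$ lie in the same right $U$-coset if and only if $Hb_2=Hub_1$ for some $u\in U$, i.e.\ they lie in the same $U$-orbit under this left action. Throughout I will denote multiplicities in $\Lambda$ by $m(Hb)$.

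For $(1)\Rightarrow(2)$, suppose $u\cdot\Lambda=\Lambda$ for every $u\in U$. Since left multiplication by $u$ is a bijection of the set of right $H$-cosets sending $Hb$ to $Hub$, the multiset identity $u\cdot\Lambda=\Lambda$ forces $m(Hub)=m(Hb)$ for every $b$. Hence any two $H$-cosets in a common $U$-orbit, i.e.\ in a common right $U$-coset, carry the same multiplicity. In particular, the $[U:H]$ right $H$-cosets inside $U$ itself all occur equally often.

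For $(2)\Rightarrow(3)$, choose a transversal $\mathfrak{U}=\{e,u_{i_2},\ldots,u_{i_p}\}$ for $H$ in $U$. Each right $U$-coset $\mathcal V$ in $G$ decomposes disjointly as the union of the $p=[U:H]$ right $H$-cosets $Hu_{i_j}b_{\mathcal V}$, for $b_{\mathcal V}$ any chosen representative. By the equal frequency hypothesis there is a common multiplicity $m_{\mathcal V}\geq 0$ with which all these $H$-cosets occur in $\Lambda$. Define the multiset $\Lambda_0$ by taking the coset $Hb_{\mathcal V}$ with multiplicity $m_{\mathcal V}$, ranging over all right $U$-cosets $\mathcal V$. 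Then
\[
\mathfrak{U}\times\Lambda_0 \;=\; \sum_{\mathcal V} m_{\mathcal V}\sum_{j=1}^{p}\bigl[Hu_{i_j}b_{\mathcal V}\bigr] \;=\; \Lambda,
\]
which is the required factorization. (One checks at once that this is independent of the choice of representatives $b_{\mathcal V}$ since $H$-cosets are the indexing objects.)

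For $(3)\Rightarrow(1)$, assume $\Lambda=\mathfrak{U}\times\Lambda_0$ and fix $u\in U$. Then $u\mathfrak{U}$ is again a transversal for $H$ in $U$, so there is a permutation $\sigma$ of $\{1,\dots,p\}$ and elements $h_j\in H$ with $uu_{i_j}=h_j u_{i_{\sigma(j)}}$ (using $H\unlhd U$). Since $h_j\in H$ acts trivially on right $H$-cosets, for each $\lambda\in\Lambda_0$ the multiset $\{uu_{i_j}\lambda:j=1,\ldots,p\}$ equals $\{u_{i_{\sigma(j)}}\lambda:j=1,\ldots,p\}=\{u_{i_j}\lambda:j=1,\ldots,p\}$. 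Summing over $\lambda\in\Lambda_0$ with multiplicities yields $u\cdot\Lambda=\mathfrak{U}\times\Lambda_0=\Lambda$, completing the cycle. The only mildly delicate point is the bookkeeping for $(2)\Rightarrow(3)$, where one must verify that the definition of $\Lambda_0$ via chosen $U$-coset representatives produces exactly $\Lambda$ as a multiset; this is the step I expect to require the most care.
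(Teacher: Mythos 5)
Your proof is correct and follows essentially the same route as the paper: the same cyclic chain of implications, the same observation that left multiplication by $u\in U$ permutes right $H$-cosets within a right $U$-coset, and the same transversal bookkeeping for the factorization. You simply spell out $(3)\Rightarrow(1)$ in detail where the paper dismisses it as clear, which is harmless.
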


\begin{proof}
It is clear that if $U$ can be factored out from $\Lambda$ then any element of $U$ normalizes $\Lambda$, hence $(3 \Rightarrow 1)$. For the proof of $(1 \Rightarrow 2)$ suppose $U$ normalizes $\Lambda$ and we assume as we may that $Hg_{1}$ and $Hg_2$ are two $H$-cosets contained in $U$. Suppose by contradiction they appear in $\Lambda$ with frequencies $d_1$ and $d_2$ where $d_{1} \neq d_{2}$. Clearly there is $u \in U$ such that $ug_{1} = g_{2}$ and hence $uHg_{1} = Hug_{1} = Hg_{2}$ which shows that the multiplication of $\Lambda$ by $u$ on the left sends the right $H$ cosets represented by $g_1$ to the cosets represented by $g_2$. Because they have different frequency, $u$ does not normalize $\Lambda$. Contradiction.

$(2 \Rightarrow 3)$: Suppose $U$ satisfies the equal frequency property. We want to show the multiset $\Lambda$ can be decomposed into disjoint sets $\Lambda_{i}$ where each $\Lambda_{i}$ has the form $\{e, u_{2},\ldots, u_{q}\}Hb = \{H, Hu_{2},\ldots, Hu_{q}\}b$. This will follow if we show $Hb$ and $Hub$ appear with the same frequency in $\Lambda$ for every $u \in U$. But this is exactly our assumption because $Hb$ and $Hub$ determine the same right coset of $U$ in $G$.
\end{proof}
In view of the first condition of the lemma we have
\begin{cor}\label{Definition of the subgroup K}
Notation as above.  There exists a unique maximal subgroup $K$ with $H \lhd K \leq N_{G}(H)$, that satisfies the conditions of Lemma \ref{three equivalent conditions}.
\end{cor}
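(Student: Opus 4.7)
The plan is to exhibit $K$ as the join of all subgroups of $N_{G}(H)$ that satisfy the conditions of Lemma \ref{three equivalent conditions}, and to establish uniqueness by showing that this family of subgroups is closed under taking the subgroup generated by two of its members. Since $G$ is finite, this will force the existence of a unique maximal element.

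More precisely, let $\mathcal{F}$ denote the family of all subgroups $U$ with $H \lhd U \leq N_{G}(H)$ that satisfy (any one, hence all, of) the three equivalent conditions of Lemma \ref{three equivalent conditions}. I would work with condition $(1)$, namely that $U$ normalizes $\Lambda$ under left multiplication, since that condition is the cleanest for checking closure. First I would note that $\mathcal{F}$ is nonempty, as $H$ itself normalizes $\Lambda$ (left multiplication by an element of $H$ permutes any right $H$-coset trivially, sending each $Hb$ to $Hb$).

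The key step is to show that if $U_{1}, U_{2} \in \mathcal{F}$ then $U := \langle U_{1}, U_{2} \rangle \in \mathcal{F}$. Clearly $U \leq N_{G}(H)$, so $H \lhd U$. It remains to verify that $U$ normalizes $\Lambda$, but this is immediate from the fact that the set $\operatorname{Stab}(\Lambda) = \{g \in N_{G}(H) : g \times \Lambda = \Lambda\}$ is itself a subgroup of $N_{G}(H)$ under the left multiplication action on $\Omega_{G,H,n}$. Since $U_{1} \cup U_{2} \subseteq \operatorname{Stab}(\Lambda)$, every word in their generators also lies in $\operatorname{Stab}(\Lambda)$, and so $U \subseteq \operatorname{Stab}(\Lambda)$, which is exactly condition $(1)$.

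With this closure property in hand, I would simply take $K$ to be the subgroup generated by the (finitely many) elements of $\mathcal{F}$; iterating the closure step shows $K \in \mathcal{F}$, and any $U \in \mathcal{F}$ satisfies $U \leq K$ by construction, giving both maximality and uniqueness. I do not anticipate a real obstacle here: once one observes that the stabilizer of $\Lambda$ in $N_{G}(H)$ under the induced action on $\Omega_{G,H,n}$ is itself a subgroup, the corollary is essentially a formal consequence of Lemma \ref{three equivalent conditions}.
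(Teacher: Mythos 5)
Your proof is correct and rests on exactly the observation the paper uses implicitly: $K$ is just the stabilizer of $\Lambda$ in $N_{G}(H)$ under the action on $\Omega_{G,H,n}$, which is automatically a subgroup containing $H$ (the paper in fact \emph{defines} $K$ this way right before Lemma \ref{three equivalent conditions}, so the corollary is immediate). Your detour through the join of the family $\mathcal{F}$ is harmless but unnecessary once you have noted that $\operatorname{Stab}(\Lambda)$ is itself the desired maximal element.
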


We now return to the subgroup $B_{\alpha}$ of $M(H)$ we considered above and let $S$ be the subgroup of $K$ that normalizes $B_{\alpha}$, that is $S = N\cap K$. We have the following sequence of subgroups
$$
H \unlhd S \leq K \leq N_{G}(H) \leq G.
$$
Recalling that $N$ acts on $\mathbb{Q}(\mu)$, we have an action of $S$ and we let $k = \mathbb{Q}(\mu)^{S}$.
The main result of this section is the following.
\begin{thm}\label{minimal field of definition Algebra and ideal}
$(1)$ The field $k$ is the unique minimal field of definition of the $G$-graded simple algebra $A$. $(2)$ The field $k$ is the unique minimal field of definition of $Id_{G,F}(A)$.
\end{thm}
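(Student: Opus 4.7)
By Lemma \ref{field of definition for algebra is also for identities} any field of definition of $A$ is a field of definition of $\Gamma := Id_{G,F}(A)$, so the theorem reduces to (i) showing that $k$ is a field of definition of $A$, and (ii) showing that every field of definition of $\Gamma$ contains $k$. Part (i) establishes existence in both (1) and (2), and part (ii) gives minimality for $\Gamma$; combined with Lemma \ref{field of definition for algebra is also for identities} this also gives minimality for $A$.

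\textbf{Proof of (i).} I would first pick a cocycle representative $\alpha$ taking values in $\mu_n$: this is possible because the divisibility of $F^{*}$ makes the Universal Coefficient map $\pi:H^{2}(H,F^{*})\to Hom(M(H),F^{*})$ an isomorphism, so $[\alpha]$ is recovered from an element of $Hom(M(H),\mu_n)$. Then $A_{1}:=\mathbb{Q}(\mu_n)^{\alpha}H\otimes M_n(\mathbb{Q}(\mu_n))$ with the $G$-grading prescribed by $\mathfrak{g}$ satisfies $A_{1}\otimes_{\mathbb{Q}(\mu_n)}F\cong A$, so $A$ is already defined over $\mathbb{Q}(\mu_n)$. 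Writing $S'$ for the image of $S$ in $\mathrm{Gal}(\mathbb{Q}(\mu_n)/\mathbb{Q})$, so that $k=\mathbb{Q}(\mu_n)^{S'}$, the next step is to build Galois descent data for $\mathbb{Q}(\mu_n)/k$: for $\sigma\in S'$ pick a lift $s\in S$, and combine the $G$-graded self-isomorphism of $A_{1}$ arising from admissible moves of types II and III by $s$ (which carry $(H,\alpha,\mathfrak{g})$ to $(H,\alpha^{s},\mathfrak{g})$, using $s\in K$) with the Galois twist by $\sigma$ on scalars. The crucial compatibility is that via $M(H)/B_{\alpha}\cong\mu_n$ the action of $s\in N$ on $M(H)/B_{\alpha}$ matches the action of $\sigma$ on $\mu_n$, forcing $\pi(\alpha^{s})=\pi(\sigma(\alpha))$ and hence $[\alpha^{s}]=[\sigma(\alpha)]$ in $H^{2}(H,F^{*})$. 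This yields a $\sigma$-semilinear $G$-graded automorphism $\phi_\sigma$ of $A_{1}$; after verifying the cocycle condition $\phi_{\sigma\tau}=\phi_\sigma\circ\phi_\tau$ on the basis elements $u_h\otimes e_{i,j}$, Galois descent produces a $k$-form $A_{0}$ with $A_{0}\otimes_{k}F\cong A$.

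\textbf{Proof of (ii).} Write $\Gamma=\Gamma_{0}\otimes_{L}F$ with $\Gamma_{0}\subseteq L\langle X_{G}\rangle$. Every $\tau\in\mathrm{Aut}(F/L)$ fixes $\Gamma_{0}$ pointwise and therefore stabilizes $\Gamma$ setwise, so $Id_{G,F}(\tau(A))=\tau(\Gamma)=\Gamma=Id_{G,F}(A)$. Invoking the classification of finite dimensional $G$-graded simple algebras over an algebraically closed field by their $T$-ideal of $G$-graded identities (Aljadeff--Haile, \cite{AljHaile}), this forces $\tau(A)\cong A$ as $G$-graded algebras. Since the presentations of $A$ and $\tau(A)$ share the same $H$ and $\mathfrak{g}$ and differ only by $\alpha$ versus $\tau(\alpha)$, this isomorphism is realized by a finite sequence of admissible moves: preservation of $H$ restricts the type III moves to $N_{G}(H)$; preservation of $\mathfrak{g}$ up to type II reordering forces the element to lie in $K$ by Lemma \ref{three equivalent conditions}; and the matching $\pi(\alpha^{g})=\pi(\tau(\alpha))$ forces it to normalize $B_{\alpha}$, i.e.\ to lie in $N\cap K=S$. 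Tracking the induced action on $\mu_n$ yields $\tau|_{\mathbb{Q}(\mu_n)}\in S'$, so $\tau$ fixes $k$. Since this holds for every $\tau\in\mathrm{Aut}(F/L)$ and $k/\mathbb{Q}$ is a finite abelian extension inside $\mathbb{Q}(\mu_n)\subseteq F$, one concludes $k\subseteq L$.

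\textbf{Main obstacle.} The most delicate point is the descent in (i): distinct lifts $s\in S$ of a given $\sigma\in S'$ can differ by an element acting trivially on $\mu_n$ but still twisting $\alpha$ by a coboundary, so the family $\{\phi_\sigma\}$ is only well-defined after a careful choice of the representatives $u_h\otimes e_{i,j}$, and the cocycle identity must be verified against this normalization. In (ii) the essential non-trivial input is the classification theorem identifying $G$-graded simple algebras by their $T$-ideals; without it the equality $\tau(\Gamma)=\Gamma$ at the level of identities cannot be promoted to a graded isomorphism at the level of algebras.
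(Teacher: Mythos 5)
Your reduction of the theorem to the two claims (i) ``$k$ is a field of definition of $A$'' and (ii) ``every field of definition of $\Gamma$ contains $k$'', via Lemma \ref{field of definition for algebra is also for identities}, is exactly the paper's. For claim (ii) your route is genuinely different and, granting the inputs you invoke, correct. The paper proves (ii) by constructing an explicit identity $p_{4}(X, X^{\diamond}, Y, W, T_{H}, \beta_{\rho, d}, q_{L})$ of $A$ that is defined over $k$ but not over $L$ whenever $L \nsupseteq k$; this occupies most of Section 2 (designated variables, frames, bridges, the binomial/Regev insertions forcing nonvanishing permutations to preserve the $S$-blocks, and the final factor $q_{L}$ built from $\Pi_{s\in S}\beta_{\tau}(\ldots; s(\zeta_{n}))$). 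You instead argue: $\tau \in Aut(F/L)$ fixes $\Gamma$ setwise, hence $Id_{G,F}(A_{\tau(\alpha)}) = \tau(\Gamma) = \Gamma$ (this is in substance the paper's Lemma \ref{binomial identities}), hence $A_{\tau(\alpha)} \cong A$ by the Aljadeff--Haile theorem, hence by the classification of presentations up to admissible moves there is $g \in K$ with $[\alpha^{g}] = [\tau(\alpha)]$; comparing kernels of $\pi$ (injective on $H^{2}(H,F^{*})$ since $F^{*}$ is divisible) gives $g \in N \cap K = S$, so $\tau|_{\mathbb{Q}(\mu_{n})} \in \bar{S}$, $\tau$ fixes $k$, and $F^{Aut(F/L)} = L$ forces $k \subseteq L$. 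Each link checks out. What the paper's longer route buys is an explicit separating identity (of independent use in Section 6) and independence from the classification theorem at this point; what yours buys is brevity and a conceptual explanation of why exactly the group $S$ governs $k$.

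For claim (i), however, there is a genuine gap, and it sits precisely at the point you call ``the most delicate''. For Galois descent you need the assignment $\sigma \mapsto \phi_{\sigma}$ to be a group homomorphism from $\bar{S} = Gal(\mathbb{Q}(\mu_{n})/k)$ into the semilinear graded automorphisms of $A_{1}$. With naive choices it is not one: two lifts of $\sigma$ to $S$ differ by an element of $T$, which still acts on $A_{1}$ by a generally nontrivial $\mathbb{Q}(\mu_{n})$-linear graded automorphism, and the identification of $A_{(H,\alpha^{s},\mathfrak{g})}$ with $A_{(H,\sigma(\alpha),\mathfrak{g})}$ involves a choice of coboundary; so $\phi_{\sigma}\phi_{\sigma'}\phi_{\sigma\sigma'}^{-1}$ is an a priori nontrivial linear automorphism. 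The existence of a compatible system of lifts (a splitting) is \emph{equivalent} to the existence of the $k$-form, so asserting that the cocycle identity ``can be verified after a careful choice'' begs the question. The paper circumvents this entirely: it first factors $\Lambda$ along $S$ (Lemma \ref{three equivalent conditions}) to reduce to the $S$-graded algebra $A_{S}$, and then writes the form down explicitly as a generalized crossed product, generated by the diagonally conjugated elements $\sum_{i}u_{h}^{g_{i}^{-1}}\otimes I_{t}\otimes e_{i,i}$ and $\sum_{i}z^{g_{i}^{-1}}\otimes I_{t}\otimes e_{i,i}$, the matrix units of the $M_{t}$ factor, and the permutation matrices $P_{\sigma}$. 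The $P_{\sigma}$'s realize the would-be descent datum inside the extra matrix factor $M_{m}$, so no splitting of $1 \to T \to S \to \bar{S} \to 1$ and no cocycle identity ever need to be checked. To complete your version of (i) you must either exhibit the splitting explicitly (which essentially reproduces the paper's generators) or replace fixed-point descent by this crossed-product construction.
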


We know by Lemma \ref{field of definition for algebra is also for identities} that if $L$ is a field of definition of $A$ it is also a field of definition of $Id_{G,F}(A)$. So, in order to prove the theorem we will show that (a) $k$ is a field of definition of $A$ (b) any field $L$ which does not contain $k$ is not a field of definition of $Id_{G,F}(A)$. For the proof of (a) we claim it is sufficient to show the $S$-simple algebra $A_{S}$ with presentation $P_{A_{S}}= \{H, \alpha, \mathfrak\{g_{S} = (s_{1},\ldots,s_{d})\}$, where $S = \{s_1, \ldots, s_{d}\}$, is defined over $k$ (Warning: $A_{S}$ is not the full $S$-homogeneous component of $A$). To see this, let
$\Lambda = (s_{1},\ldots,s_{d})(Hb_{1},\ldots,Hb_{r})$ be a decomposition of the tuple $\Lambda$ along $S$ and let
let $B_{S, k}$ be a $k$-form of $A_{S}$ as an $S$-graded algebra. Consider the algebra $M_{r}(k)\otimes_{k} B_{S, k}$ where the homogeneous degree of $e_{i,j} \otimes z_{\sigma}$, $z_{\sigma} \in (B_{S, k})_{\sigma}$, is $b_{i}^{-1}\sigma b_{j} \in G$. Extending scalars to $F$ yields
\begin{equation}\label{A_S}
F \otimes_{k}M_{r}(k)\otimes_{k} B_{S, k} \cong M_{r}(k)\otimes_{k}F \otimes B_{S, k} \cong M_{r}(F) \otimes_{F} A_{S} \cong A.
\end{equation}

Let us show now the algebra $A_{S}$ is defined over $k$. Let $T$ be the kernel of the action of $S$ on $\mathbb{Q}(\mu_{n})$ and $S/T = Gal(\mathbb{Q}(\mu_{n})/k)$. We exhibit elements which generate the $k$-form $B_{S, k}$ of $A_{S}$ over $k$. Because the cohomology class $\alpha$ gets values in $\mathbb{Q}(\mu_{n})$ we know the algebra $A_{S}$ has an obvious $\mathbb{Q}(\mu_{n})$-form, which we still denote by $A_{S}$. Decompose $A_{S} \cong \mathbb{Q}(\mu_{n})^{\alpha}H \otimes_{\mathbb{Q}(\mu_{n})}M_{tm}(\mathbb{Q}(\mu_{n}))$ where $t$ is the order of $T/H$ and $m$ the order of $S/T$. We may decompose the matrix algebra $M_{tm}(\mathbb{Q}(\mu_{n}))$ and get
$$
A_{S} \cong \mathbb{Q}(\mu_{n})^{\alpha}H \otimes_{\mathbb{Q}(\mu_{n})}M_{t}(\mathbb{Q}(\mu_{n}))\otimes_{\mathbb{Q}(\mu_{n})} M_{m}(\mathbb{Q}(\mu_{n}))
$$
where $M_{t}(\mathbb{Q}(\mu_{n}))$ is elementary graded by a transversal of $H$ in $T$ and $M_{m}(\mathbb{Q}(\mu_{n}))$ is elementary graded by a transversal of $T$ in $S$.
The form we are looking for is the algebra over $k$ generated by the following sets:

\begin{enumerate}
\item
$$
\{\sum_{i}u_{h}^{g_{i}^{-1}}\otimes I_{t} \otimes e_{i,i}: h \in H \}
$$
\item
$$
\{\sum_{i}z^{g_{i}^{-1}}\otimes I_{t} \otimes e_{i,i}: z \in \mathbb{Q}(\mu_{n}) \} (^{*})
$$
\item
$$
\{1 \otimes e_{i,j} \otimes I_{m}: i, j=1,\ldots,r \}
$$
\item
$$
\{1 \otimes I_{t} \otimes P_{\sigma}: \sigma \in Gal(\mathbb{Q}(\mu_{n})/k) \}
$$
where $P_{\sigma}$ is the $m\times m$ permutation matrix, homogeneous of degree $\sigma$.
\end{enumerate}
$(^{*})$ Note that the $2$nd set above is obtained from the $1$st set so in fact it can be omitted from the generating set.

We proceed now with the second part of the proof, namely for any field $L$ not containing $k$ we can find an identity of $A$ that is not defined over $L$.
It is convenient to regard $\mathfrak{g}$ as an ordered tuple. Our first task is to get a better understanding of the ordered tuples one may obtain by left multiplication of $\mathfrak{g}$ by an element of $N_{G}(H)$.

\begin{defn}
Consider the $n$-tuple $\mathfrak{g} = \{g_{1} = e ,g_{2},\ldots,g_{n}\} \in G^{n}$ and the $n$-tuple $\Lambda = \{Hg_{1},Hg_{2},\ldots,Hg_{n}\}$ of right $H$-cosets represented by $\mathfrak{g}$. For an $n$-tuple $(h_{1},\ldots,h_{n}) \in H^{n}$ we refer to
$$
\mathfrak{g}'_{(h_{1},\ldots,h_{n})} = (g_{1}^{-1}h_{1}g_{2}, g_{2}^{-1}h_{2}g_{3},\ldots, g_{n-1}^{-1}h_{n-1}g_{n}, g_{n}^{-1}h_{n}g_{1})
$$
as the \textit{derivative} of $\mathfrak{g}$ \textit{along} $(h_{1},\ldots,h_{n})$. Likewise, we refer to the $n$-tuple of sets
$$
\Lambda' = (g^{-1}_{1}Hg_{2}, g_{2}^{-1}Hg_{3},\ldots, g_{n-1}^{-1}Hg_{n}, g_{n}^{-1}Hg_{1})
$$
as the \textit{full derivative} of $\Lambda$ along $H^{n}$. Clearly, $\Lambda'$ is the set of all derivatives of $\mathfrak{g}$ along the elements of $H^{n}$.

\end{defn}

It is easy to see that if we multiply the $n$-tuple $\Lambda$ by an element $\theta \in N_{G}(H)$ on the left, the obtained tuple ${\hat{\Lambda}} = \theta\times {\Lambda}$ has the same full derivative as ${\Lambda}$. We claim the converse holds.

\begin{lem}\label{equal derivative I}
Let $\Lambda$ and $\hat{\Lambda}$ be $n$-tuples of right $H$-coset. If $\Lambda' = \hat{\Lambda}'$ then there exists $\theta\in N_{G}(H)$ such that $\hat{\Lambda} = \theta\times \Lambda$.
\end{lem}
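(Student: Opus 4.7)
The plan is to rewrite the equality $\Lambda' = \hat{\Lambda}'$ as a system of coset equations that force a single global left-multiplier $\theta$. Choose any representatives $g_i$ and $\hat{g}_i$ of the $H$-cosets forming $\Lambda$ and $\hat{\Lambda}$ respectively. Componentwise, the hypothesis becomes
\[
g_i^{-1} H g_{i+1} \;=\; \hat{g}_i^{-1} H \hat{g}_{i+1}, \qquad i=1,\ldots,n,
\]
with indices read cyclically (so that for $i=n$ one obtains $g_n^{-1}Hg_1 = \hat{g}_n^{-1}H\hat{g}_1$). Introducing $a_i := \hat{g}_i g_i^{-1}$ and multiplying each equality on the left by $\hat{g}_i$ and on the right by $g_{i+1}^{-1}$ yields the cyclic system
\[
a_i H \;=\; H a_{i+1}, \qquad i=1,\ldots,n.
\]

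From each equation $a_i H = H a_{i+1}$ I would extract two facts. First, since $a_{i+1} \in H a_{i+1} = a_i H$, there exists $h_i \in H$ with $a_{i+1} = a_i h_i$; substituting this back into the equation gives $H a_i h_i = a_i H$, hence $a_i^{-1} H a_i = H h_i^{-1} = H$, so $a_i \in N_G(H)$. Second, once $a_i$ normalizes $H$, we have $a_i H = H a_i$, and combined with $a_i H = H a_{i+1}$ this gives $H a_i = H a_{i+1}$. Running the argument cyclically (including the wrap-around equation $a_n H = H a_1$) shows that all the elements $a_1,\ldots,a_n$ lie in a single left $H$-coset.

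Setting $\theta := a_1 = \hat{g}_1 g_1^{-1}$, we obtain $\theta \in N_G(H)$ and $H a_i = H \theta$ for every $i$. Consequently $\hat{g}_i = a_i g_i \in H\theta g_i = \theta H g_i$, that is $H\hat{g}_i = \theta\cdot Hg_i$ for every $i$. Componentwise this is exactly $\hat{\Lambda} = \theta \times \Lambda$, as desired.

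The only thing to watch is careful bookkeeping of left versus right cosets: every step hinges on translating the two-sided derivative equation into the mixed identity $a_i H = H a_{i+1}$. There is no deeper obstacle. Note that $\theta$ is determined only modulo $H$, which is consistent both with the ambiguity in choosing the representatives $g_i, \hat{g}_i$ and with the fact that $\Lambda'$ is well defined on cosets (since $(hg_i)^{-1} H (h'g_{i+1}) = g_i^{-1} H g_{i+1}$ for any $h,h' \in H$).
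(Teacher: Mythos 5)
Your proof is correct and follows essentially the same route as the paper's: both identify $\theta=\hat{g}_1g_1^{-1}$ and use the key observation that a left $H$-coset equal to a right $H$-coset forces its elements into $N_G(H)$. Your version just organizes the chaining more explicitly through the cyclic system $a_iH=Ha_{i+1}$, where the paper compresses this into the single equality $g_1^{-1}Hg_j=\hat{g}_1^{-1}H\hat{g}_j$.
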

\begin{proof}
From the condition on the derivatives we have that $g_{1}^{-1}Hg_{j} = \hat{g}_{1}^{-1}H\hat{g}_{j}$ and so $\theta = \hat{g}_{1}g_{1}^{-1} \in N_{G}(H)$ because it determines a left coset which is also a right coset and hence it normalizes $H$. Moreover, we obtain that $\hat{g}_{1}g_{1}^{-1}g_{j}$ and $\hat{g}_{j}$ represent the same $H$-right coset as desired.
\end{proof}

We need a stronger lemma regarding the derivative of the $n$th tuple \linebreak $\mathfrak{g}=(g_{1},\ldots,g_{n})$. The proof is contained in the statement.

\begin{lem}\label{equal derivative II}
Let $\mathfrak{g}_{id} = \mathfrak{g}$ be an $n$-tuple of right $H$-coset representatives and let $\mathfrak{g}_{\sigma}$ be the permutation of $\mathfrak{g}$ by $\sigma \in Sym(n)$. Suppose $\mathfrak{g}_{id}$ and $\mathfrak{g}_{\sigma}$ have a \textit{common derivative}, that is there are tuples $\vec{h}=(h_{1},\ldots,h_{n}), \vec{h}'= (h'_{1},\ldots, h'_{n})$ such that $\mathfrak{g}'_{id, \vec{h}}:=(g_{1}^{-1}h_{1}g_{2},\ldots,g_{n}^{-1}h_{n}g_{1} )= \mathfrak{g}'_{{\sigma, \vec{h'}}} := (g_{\sigma(1)}^{-1}h'_{1}g_{\sigma(2)},\ldots,g_{\sigma(n)}^{-1}h'_{n}g_{\sigma(1)})$. Then $g_{\sigma(1)}g_{1}^{-1}h_{1}\cdots h_{i} g_{i} = h'_{1}\cdots h'_{i} g_{\sigma(i)}$. In particular, if $g_{\sigma(1)}g_{1}^{-1}\in N_{G}(H)$, the multiplication of the tuple $\mathfrak{g}$ by $u = g_{\sigma(1)}g_{1}^{-1}$ determines the same tuple of $H$-right cosets as $\mathfrak{g}_{\sigma}$.
\end{lem}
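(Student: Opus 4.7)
The plan is to prove the main identity by induction on $i$ via a telescoping argument, after which the ``in particular'' consequence falls out from the normalizer assumption. First I would unpack the hypothesis $\mathfrak{g}'_{id,\vec{h}} = \mathfrak{g}'_{\sigma,\vec{h}'}$ into the $n$ coordinatewise equations
$$g_{i}^{-1} h_{i} g_{i+1} \;=\; g_{\sigma(i)}^{-1} h'_{i} g_{\sigma(i+1)}, \qquad i=1,\ldots,n,$$
with indices read cyclically so that $g_{n+1}=g_{1}$ and $\sigma(n+1)=\sigma(1)$. The observation driving the argument is that the long product on the left of the asserted identity telescopes once one inserts factors of the form $g_{j}g_{j}^{-1}$ and applies the $j$-th coordinate equation at each junction.

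For the base case, I would simply rewrite the first coordinate equation as $g_{\sigma(1)}g_{1}^{-1} h_{1} g_{2} = h'_{1} g_{\sigma(2)}$. For the inductive step, I would split
$$g_{\sigma(1)}g_{1}^{-1}h_{1}\cdots h_{i-1}h_{i}\, g_{i+1} \;=\; \bigl(g_{\sigma(1)}g_{1}^{-1}h_{1}\cdots h_{i-1}\, g_{i}\bigr)\bigl(g_{i}^{-1}h_{i}g_{i+1}\bigr),$$
apply the inductive hypothesis to the first factor (replacing it by $h'_{1}\cdots h'_{i-1}\, g_{\sigma(i)}$), and apply the $i$-th coordinate equation to the second factor (replacing it by $g_{\sigma(i)}^{-1} h'_{i} g_{\sigma(i+1)}$). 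Cancelling $g_{\sigma(i)}g_{\sigma(i)}^{-1}$ yields $h'_{1}\cdots h'_{i}\, g_{\sigma(i+1)}$, closing the induction.

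For the consequence, set $u := g_{\sigma(1)}g_{1}^{-1}$ and assume $u\in N_{G}(H)$. I need to show that $H u g_{i} = H g_{\sigma(i)}$ for every $i$, since this is precisely what it means for the tuple $u\times \mathfrak{g}$ and $\mathfrak{g}_{\sigma}$ to determine the same $n$-tuple of right $H$-cosets. The case $i=1$ is immediate because $ug_{1} = g_{\sigma(1)}$. For $i\geq 2$, the identity just established reads $u\, h_{1}\cdots h_{i-1}\, g_{i} = h'_{1}\cdots h'_{i-1}\, g_{\sigma(i)}$. Using that $u$ normalizes $H$, I would rewrite $u\, h_{1}\cdots h_{i-1} = (uh_{1}u^{-1})\cdots(uh_{i-1}u^{-1})\cdot u =: h^{\sharp}\, u$ with $h^{\sharp}\in H$, whence $h^{\sharp}\, u g_{i} = h'_{1}\cdots h'_{i-1}\, g_{\sigma(i)}$ and therefore $u g_{i} \in H g_{\sigma(i)}$, as required.

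There is no serious obstacle; the argument is essentially a bookkeeping exercise in telescoping products. The only points requiring real care are the cyclic indexing convention for the derivative (so that the final entry of each derivative wraps around to $g_{1}$) and the timely use of $u\in N_{G}(H)$, which is what converts the pointwise group-element identity into the desired equality of right $H$-cosets.
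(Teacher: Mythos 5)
Your proof is correct and is exactly the computation the paper intends (the paper omits it, remarking that ``the proof is contained in the statement''): the coordinatewise equations telescope, and the normalizer hypothesis converts the group identity into an equality of right $H$-cosets. You have also, appropriately, read the conclusion with the consistent indexing $g_{\sigma(1)}g_{1}^{-1}h_{1}\cdots h_{i}g_{i+1}=h'_{1}\cdots h'_{i}g_{\sigma(i+1)}$, which is what the first coordinate equation actually gives at $i=1$ and what the ``in particular'' clause requires.
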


We now come to the construction of a polynomial identity $p_{L}$ of $A$, where $L$ is any field not containing $k$. As mentioned above the polynomial $p_{L}$ we construct is defined (as an identity of $A$) over the field $k$ but it is not defined over $L$.

We start by putting a special ordering on the $n$-tuple $\mathfrak{g} = (g_{1},\ldots,g_{n})$ (the first two items below were already mentioned earlier).
\begin{enumerate}

\item
By means of the basic moves we arrange equal representative to be adjacent in $\mathfrak{g}$.
\item

We let $g_{1} = e$ and arrange so that it appears in $\mathfrak{g}$ with maximal multiplicity, say $d \geq 1$.
\item
Temporarily we order the coset representatives with multiplicities in decreasing order. We change the notation as follows:
$$\mathfrak{g} = (g_{[1, d_{1}]} = e_{d_{1}}, g_{[2, d_{2}]},\ldots, g_{[r, d_{r}]})$$
where (now) $g_{1},\ldots,g_{r}$ are the different $H$-coset representatives, $g_{[i, d_{i}]} = (g_{i},\ldots,g_{i})$, $d_{i}$ times and $(d_{1}\geq d_{2} \geq \ldots \geq  d_{r})$. We have $\sum^{r}_{i=1}d_{i} = n.$
\item
Factor out from the tuple $\mathfrak{g}$ the $H$-coset representatives from the group $K$ (see Lemma \ref{three equivalent conditions} and Corollary \ref{Definition of the subgroup K}), starting, i.e. put them on the left, with the representatives from $S$. Thus, we get the following configuration (again, changing notation).
$$
\mathfrak{g} = (e,s_{2},\ldots,s_{p}, k_{p+1},\ldots,k_{m})\times (e_{d_{1}}, g_{[2, d_{2}]},\ldots, g_{[q, d_{q}]})
$$
where $g_{[j, d_{j}]}$ stands for $(g_{j},\ldots,g_{j})$, $d_{j}$ times.
We assume as we may, that the elements $(e, g_{2},\ldots,g_{q})$ are different right $K$-coset representatives in $G$.
With this notation we have $p = [S:H]$, $m = [K : H]$, all elements of $K$ appear with frequency $d = d_{1}$. The order of $\mathfrak{g}$ is $n = m\times (d_{1}+ \cdots + d_{q})$.
\item
We remind the reader that $K$ is the maximal subgroup of $N_{G}(H)$ that can be factored out from $\mathfrak{g}$. Therefore, it may be the case that among the elements $g_{2}, \ldots, g_{q}$ there are representatives which belong to $N_{G}(H)$. If we have $t \geq 1$ such elements (including $e$) we write
$$
\mathfrak{g} = (e,s_{2},\ldots,s_{p}, k_{p+1},\ldots,k_{m})\times (e_{d_{1}}, \nu_{[2, d_{2}]},\ldots, \nu_{[t, d_{t}]}, g_{[{t+1},d_{t+1}]}\ldots, g_{[{q},d_{q}]}).
$$

In the next steps we treat the representatives $(g_{[{t+1},d_{t+1}]}\ldots, g_{[{q},d_{q}]})$, namely those that are outside $N_{G}(H)$. To this end note that if $\nu\in N_{G}(H)$ and $g \in G$ is an element with $H\nu \neq Hg$ then $\nu^{-1}Hg \cap H = \varnothing$. On the other hand, for $g_{i} \not \in N_{G}(H)$ this is false in the following strong sense: There exists $g \in G$, possibly not in $\mathfrak{g}$, with $Hg_{i} \neq Hg$ and yet $g_{i}^{-1}Hg \cap H \neq \varnothing$. Indeed, because $g_{i}$ does not normalize $H$, $g^{-1}_{i}Hg_{i} \neq H$. It follows there is $g \in G$ with $g_{i}^{-1}Hg \neq g_{i}^{-1}Hg_{i}$ which intersect nontrivially $H$. We introduce an equivalence relation on the right $H$-coset representatives in $\mathfrak{g}$.
\begin{defn}
We say $g, g' \in \mathfrak{g}$ are related if $g^{-1}Hg' \cap H \neq \varnothing$.
\end{defn}
Let $[g]$ be the equivalence class represented by $g \in \mathfrak{g}$. By the previous paragraph we see that $H$-coset representatives which belong to $N_{G}(H)$ represent classes of cardinality one. We refer to such classes as \textit{singletons} and \textit{nonsingletons} otherwise. Because some $H$-cosets may not be represented in $\mathfrak{g}$, we may find among the classes $[g_{i}]$ where $g_{i} \notin N_{G}(H)$, singletons as well as nonsingletons. Thus, with the above notation the nonsingletons classes are contained in $(g_{[{t+1},d_{t+1}]}\ldots, g_{[{q},d_{q}]}).$ We can set now the final refinement on the ordering of the tuple $\mathfrak{g}$, namely
$$
\mathfrak{g} = (e,s_{2},\ldots,s_{p}, k_{p+1},\ldots,k_{m})\times
$$
$$(e_{d_{1}}, \nu_{[2, d_{2}]},\ldots, \nu_{[t, d_{t}]}, g_{[{t+1},d_{t+1}]}\ldots, g_{[{f},d_{f}]}, g_{[{f+1},d_{f+1}]}, \ldots, g_{[{q},d_{q}]})
$$
where $g_{i}$, $t+1 \leq i \leq f$, represent singletons and nonsingletons if $f+1 \leq i \leq q$.
\end{enumerate}
Next, we want to describe the homogeneous degrees of the blocks determined by $\mathfrak{g}$. By definition of the $G$-grading, as determined by $P_{A}$, we see that the $n\times n$ matrix algebra is decomposed into blocks which arise from repetitions of the $H$-coset representatives. Note that if a representative appears $d_{i}$ times it determines a $d_{i} \times d_{i}$-diagonal block of $e$-homogeneous elements which may be denoted by $u_{e}\otimes e_{[k_{a}\times i_{l}, k_{a}\times i_{s}]}$ where $g_{i_l} = g_{i_s}$ and $k_{a}$ is any element of $K$. Note that the $e$-elements may appear only in this way, namely if $u_{h} \otimes e_{[k_{a}\times i_{l}, k_{b} \times j_t]}$ is homogeneous of degree $e$, then $h = e$, $a=b$ and  $g_{i_l} = g_{j_t}$ which implies $i = j$. We decompose the set of all basis elements $\{u_{h}\otimes e_{[k_{a}\times i_{l}, k_{b} \times j_{t}]}\}$ into blocks, where the $[k_{a}\times i, k_{b} \times j]$-block is of order $d_{i} \times d_{j}$, that is $d_{i}$-rows and $d_{j}$ columns, and with \textit{depth} $(= ord(H))$. The block is spanned by the basis elements $u_{h}\otimes e_{[k_{a} \times i_{l}, k_{b} \times j_{t}]}$, $l=1,\ldots, d_{i}$, $t = 1,\ldots, d_{j}$ and $h \in H$. It is convenient to regard a block as above as a $3$-dimensional \textit{box}, of order $(d_{i}, d_{j}, ord(H))$, as the $[k_{a} \times i, k_{b} \times j]$-block with $ord(H)$ pages.

As mentioned earlier, the $e$-component of $A$ appears only in diagonal blocks. Moreover, we see that the homogeneous degrees which appear in the box which corresponds to a diagonal block are all in $H$ if and only if the representative $g \in \mathfrak{g}$ normalizes $H$. In that case we say the $[k_{a}\times i, k_{a}\times i]$-diagonal block has pages only in $H$.

Now, take two arbitrary basis elements. Their multiplication
$$u_{h_{1}}\otimes e_{[k_{a_1} \times i_{l}, k_{b_1} \times j_{t}]}\times u_{h_{2}}\otimes e_{[k_{a_2} \times x_{f}, k_{b_2} \times y_{s}]}$$
is nonzero if and only if $[k_{b_1} \times j_{t}] =  [k_{a_2} \times x_{f}]$. On the other hand, it is clear that any two basis elements $z$ and $w$ can be bridged by a basis element, that is there exists $c$ with $zcw \neq 0$. The next lemma explains the reason for introducing the terminology of singletons and nonsingletons.

\begin{lem}\label{bridges of singletons and nonsingletons}
Consider diagonal blocks represented by representatives $k_{a}g_{i_{l}}$ and $k_{b} g_{j_{t}}$ in $\mathfrak{g}$ and let $z_e$ and $w_e$ be basis elements of $A$, homogeneous of degree $e$, which belong to these blocks respectively. Then the following hold:

\begin{enumerate}
\item
Suppose $z_{e}$ and $w_{e}$ belong to the same diagonal block. Then, there exists an $e$-element $c_{e}$ such that $z_{e}c_{e}w_{e} \neq 0$. Furthermore, if $c_{g}$ is homogeneous of degree $g$ such that $z_{e}c_{g}w_{e} \neq 0$, then necessarily $g = e$.
\item
Suppose $k_{a}g_{i_{l}}$ and $k_{b} g_{j_{t}}$ are not related by the equivalence relation (e.g. one of them is a singleton) and suppose $z_{e}c_{g}w_{e} \neq 0$, some $g \in G$. Then necessarily $g \notin H$.
\item
Suppose the diagonal blocks are different and are represented by elements $k_{a}g_{i_{l}}$ and $k_{b}g_{j_{t}}$ which are related by the equivalence relation. Then there is $c_{h}$, $h \in H$ with $z_{e}c_{h}w_{e} \neq 0$. However, there is no such bridge $c_{h}$ with $h = e$.
\end{enumerate}
\end{lem}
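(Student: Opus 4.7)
The plan is to prove all three parts by directly computing the homogeneous degree of a bridge, starting from the formula
\[
\deg(u_h \otimes e_{[k_a i_l, k_b j_t]}) \;=\; (k_a g_{i_l})^{-1} h (k_b g_{j_t}) \;=\; g_i^{-1} k_a^{-1} h k_b g_j,
\]
together with the observation that $e$-homogeneity inside a diagonal block forces the $H$-component to be $e$, so we may write $z_e = u_e \otimes e_{[k_a i_l, k_a i_{l'}]}$ and $w_e = u_e \otimes e_{[k_b j_t, k_b j_{t'}]}$. Matrix-multiplication compatibility with $z_e c w_e \neq 0$ pins the bridge uniquely to position $(k_a i_{l'}, k_b j_t)$, so $c = u_{h'} \otimes e_{[k_a i_{l'}, k_b j_t]}$ for some $h' \in H$, and
\[
g \;=\; g_i^{-1} k_a^{-1} h' k_b g_j \;=\; g_i^{-1}\, h''\, k_a^{-1} k_b\, g_j,
\]
where $h'' = k_a^{-1} h' k_a \in H$ (using $k_a \in N_G(H)$) ranges over all of $H$ as $h'$ does.

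Part (2) is then immediate: the ``unrelated'' hypothesis is literally the statement $g_i^{-1} k_a^{-1} H k_b g_j \cap H = \varnothing$, which says exactly that $g \notin H$ for every choice of $h'$.

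For part (3), the ``related'' hypothesis directly supplies an $h'$ for which $g \in H$, giving the required $H$-bridge. To rule out $g = e$: the equation $g = e$ rearranges to $h'' \, k_a^{-1} k_b = g_i g_j^{-1}$, and since $h'', k_a^{-1} k_b \in K$, this forces $g_i g_j^{-1} \in K$, so $g_i$ and $g_j$ represent the same right $K$-coset in $G$. Because $g_1, \ldots, g_q$ were chosen in step~(4) of the normalization of $\mathfrak{g}$ as distinct right $K$-coset representatives, we must have $i = j$; with $g_i = g_j$ the equation collapses to $k_a^{-1} k_b = (h'')^{-1} \in H$, and since $k_1, \ldots, k_m$ were chosen as distinct $H$-coset representatives inside $K$, we conclude $k_a = k_b$, contradicting the hypothesis that the diagonal blocks are distinct.

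For part (1), the same-block hypothesis gives $k_a = k_b$ and $i = j$, and the degree formula collapses to $g = g_i^{-1} h'' g_i$ with $h'' \in H$; the required $e$-bridge is produced by taking $h' = e$, which yields $c_e = u_e \otimes e_{[k_a i_{l'}, k_a i_t]}$ and establishes existence. The uniqueness assertion $g = e$ is the subtler point: I plan to derive it by the same coset bookkeeping as in part (3), now applied \emph{internally} to a single block, showing that among all $h' \in H$ producing bridges with $z_e c w_e \neq 0$ the admissibility constraints inherited from the fixed basis and the specific ordering of $\mathfrak{g}$ (notably step~(4), which cleanly separates $K$-coset and $H$-coset contributions) force $h'' = e$. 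The main obstacle I anticipate is this last uniqueness in part (1); parts (2) and (3) reduce to essentially formal coset algebra once the degree formula is written down, whereas the same-block case is internal to one block and so requires an extra structural observation drawn from steps~(4)--(5) of the ordering of $\mathfrak{g}$.
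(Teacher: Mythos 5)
Your treatment of parts (2) and (3) is correct and is essentially the paper's own argument: the paper writes out only part (3), and does so exactly as you do, by exhibiting the bridge $u_{h}\otimes e_{[k_{a}\times g_{i_{l}},\,k_{b}\times g_{j_{t}}]}$ with $h$ supplied by the relatedness hypothesis; your coset bookkeeping ruling out an $e$-bridge between distinct blocks is also sound (it amounts to the earlier observation that a degree-$e$ basis element forces $h'=e$, $a=b$ and $i=j$).

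The gap is exactly where you anticipated it, in the ``furthermore'' of part (1), and the plan you sketch cannot close it. By your own degree formula, for \emph{every} $h'\in H$ the element $c=u_{h'}\otimes e_{[k_{a}\times i_{l'},\,k_{a}\times i_{t}]}$ belongs to the fixed basis, satisfies $z_{e}c w_{e}=\lambda\,u_{h'}\otimes e_{[k_{a}\times i_{l},\,k_{a}\times i_{t'}]}\neq 0$ with $\lambda\neq 0$ a cocycle value, and has homogeneous degree $(k_{a}g_{i})^{-1}h'(k_{a}g_{i})$, which equals $e$ only when $h'=e$. So the set of degrees realized by admissible bridges inside a single diagonal block is the entire conjugate $(k_{a}g_{i})^{-1}H(k_{a}g_{i})$, not $\{e\}$. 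No constraint coming from the fixed basis or from steps (4)--(5) of the ordering of $\mathfrak{g}$ eliminates the bridges with $h'\neq e$: those steps only govern which coset representatives occur and in what order, not the $ord(H)$ pages of each box, all of which are present in the basis. Hence the uniqueness assertion, read literally, fails whenever $H$ is nontrivial, and further coset algebra will not rescue it. What is true --- and what the subsequent construction of $E_{1}$ and $p_{1}$ actually relies on --- is the converse-flavoured statement: a bridge of degree $e$ between two $e$-elements forces $h'=e$, $a=b$, $i=j$, i.e.\ an $e$-bridge can only join $e$-elements of one and the same diagonal block (equivalently, combined with parts (2)--(3), distinct blocks admit no $e$-bridge). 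You should prove that statement in place of the one you set out to prove, or explicitly record that part (1) requires this reformulation; as it stands, your part (1) is incomplete and the missing half is not provable.
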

\begin{proof}
The proofs of parts $1$ and $2$ follow directly from the definitions. Let us prove part $3$: Clearly, any bridge between different blocks cannot be homogeneous of degree $e$. By assumption we have that $(k_{a}g_{i_{l}})^{-1}Hk_{b}g_{j_{t}} \cap H \neq \varnothing$, so if $h' = (k_{a}g_{i_{l}})^{-1}\times h\times k_{b}g_{j_{t}}$. It follows that the basis element $u_{h} \otimes e_{[k_{a}\times g_{i_{l}}, k_{b}\times g_{j_{t}}]}$ is homogeneous of degree $h'$ and bridges $z_{e}$ and $w_{e}$.

\end{proof}

Let $L$ be a field not containing $k$. As mentioned earlier, our goal is to construct a polynomial identity $p_{L}$ of $A$ which is defined over $k$ but not over $L$.

Let
$$\Delta = \{u_{h}\otimes e_{[k_{a} \times i_{l}, k_{b} \times j_{t}]}\}$$ be a basis of $A$ and let $$\Delta_{e} = \{u_{e}\otimes e_{[k_{a} \times i_{l}, k_{a} \times i_{t}]}\}$$ be the subset of $\Delta$ consisting of all $e$-elements. Recall that the elements of $\Delta_{e}$ appear in diagonal blocks represented by $k_{a}g_{i_l}$.

We consider a \textit{nonzero} product $E_{0}$ of basis elements of the form $u_{h}\otimes e_{[k_{a} \times i_{l}, k_{b} \times j_{t}]}$ which visits \textit{all} basis elements of $\Delta_{e}$.

For each element $\{u_{e}\otimes e_{[k_{a} \times i_{l}, k_{a} \times i_{t}]}\} \in \Delta_{e}$ we choose an appearance of it in the product $E_{0}$ (we know each one of these appears at least once) and refer to it as a designated element of $E_{0}$.
\begin{rem}\label{Euler path}
In fact it is possible to arrange that visits in the $e$-diagonal blocks consist only of designated elements but this will not play a role in the construction.
\end{rem}

We assume as we may, the product satisfies the following conditions
\begin{enumerate}
\item
Elements of $\Delta_{e}$ which belong to the same $e$-block are separated exclusively by $e$-elements.
\item
(For convenience) The $e$-blocks appear in $E_{0}$ successively along the main diagonal, that is the product starts with the $e$-block in place $(1,1)$, $(2,2)$ and so on. In particular we visit each diagonal block exactly once.
\item
Multiplying the entire monomial $E_{0}$ on the left and on the right by suitable basis elements we may assume the monomial does not start and does not end with a designated element. Moreover we may assume the monomial $E_{0}$ starts and ends with an element in $\Lambda_{e}$.
\item
The bridges $w_{g}$ between $e$-blocks are chosen as in Lemma \ref{bridges of singletons and nonsingletons}.
\end{enumerate}

Consider the homogeneous degrees of all basis elements which appear in $E_{0}$ and let $Z_{0}$ be a multilinear monomial whose indeterminates are homogeneous of degrees corresponding to the elements of $E_{0}$. Clearly, by its very construction, the monomial $Z_{0}$ has a nonzero evaluation $\hat{Z}_{0} = E_{0}$. Note however that the monomial $Z_{0}$ may admit many different nonzero evaluations.

Now, we want to extend the monomial $E_{0}$ by bordering the designated elements with $e$-diagonal elements. These extra elements are called \textit{frames}. We denote the monomial obtained by $E_{1}$.

We denote by $Z_{1}$ the multilinear monomial corresponding to $E_{1}$ and denote by $\hat{Z}_{1} = E_{1}$. The designated variables of $Z_{1}$ are those $e$-variables, denoted by $X$, that correspond to the designated $e$-elements in $E_{1}$. We denote by $X^{\diamond}$ the extra $e$-variables that correspond to $e$-elements in $E_{0}$ (see Remark \ref{Euler path}), by $Y$ the $e$-variables that correspond to the frame elements in $E_{1}$ and by $W$ the necessarily non-$e$-variables that correspond to the $e$-block bridges.

We let $p_{1} = p_{1}(X, X^{\diamond}, Y, W)$ be the polynomial obtained by alternating the designated variables
$$
p_{1} = \sum_{\sigma \in Sym(d_{e})}(-1)^{\sigma}Z_{1, \sigma}
$$
where $d_{e}$ is the cardinality of $\Lambda_{e}$.
Here are some observations regarding the polynomial $p_{1}$ and its evaluations on $\Lambda$.
\begin{enumerate}
\item
Because of the alternation, the designated variables $X$ must be evaluated on the full set $\Lambda_{e}$, the basis elements of $A_{e}$, as long as the value is nonzero.
\item
For each evaluation at most one monomial, say $Z_{1, \sigma}$, obtains a nonzero evaluation. Indeed, the values of the designated variables $X$ determine uniquely the values of the frames variables $Y$ if the value of the monomial is nonzero. It follows that if the value of the frames are compatible with the value of the designated variables of the monomial $Z_{1, \sigma}$, they are not compatible with the value of the designated variables of the monomial $Z_{1, \tau}$ for $\tau \neq \sigma$ and hence the value of $Z_{1, \tau}$ is zero.
\item
From the construction it follows that $Z_{1} = Z_{1,id}$ has a nonzero evaluation and hence $p_{1}$ is a nonidentity (because, by the previous statement, the other monomials vanish).
\end{enumerate}

Our next task is to insert variables/polynomials so that, roughly speaking, they \textit{reduce} the possible monomials $Z_{1, \sigma}$, $\sigma \in Sym(d_{e})$ and $d_{e} = card(\Lambda_{e})$, which may get a nonzero value.
Let $p_{1} = p_{1}(X, X^{\diamond}, Y, W)$.  As mentioned above, by the alternation, in any nonzero evaluation we must evaluate the set of designated variables $X$ by a full basis of $A_{e}$. Taking the \textit{basic evaluation} of $X$ as in $E_{1}$ we obtain $p_{1}(\hat{X}, X^{\diamond}, Y, W)$, and clearly we can choose evaluations of the sets $Y$, $X^{\diamond}$ and $W$ (again, as in $E_{1}$), so the first monomial does not vanish and hence so does the entire polynomial. It is not difficult to construct examples such that by changing the values of $Y$, $X^{\diamond}$ and $W$ (and keeping the evaluation $\hat{X}$ of $X$) a different monomial, say $Z_{1, \sigma}$, $\sigma\neq id$, may get a nonzero value. We note on the other hand, that for the polynomial $p_{1}(\hat{X}, X^{\diamond}, Y, W)$, there may be monomials $Z_{1, \tau}(\hat{X}, X^{\diamond}, Y, W)$, which vanish for every evaluation of $Y$, $X^{\diamond}$ and $W$.

Let us illustrate this with a simple example.

Consider the algebra of $3 \times 3$-matrices, $G = \{e,g\}$-graded with the elementary grading determined by the tuple $\mathfrak{g} = (e,e,g)$. We consider the monomial $E_{0}$ (in the above notation) $E_{0} = e_{1,1}e_{1,2}e_{2,2}e_{2,1} \times e_{2,3} \times e_{3,3}$. We multiply the monomial on the left and on the right by $e$-elements, so we get $E_{0} = e_{1,1}\textbf{e}_{1,1}\textbf{e}_{1,2}\textbf{e}_{2,2}\textbf{e}_{2,1} \times e_{2,3} \times \textbf{e}_{3,3}e_{3,3}$. We insert the frames bordering the designated elements and get the monomial
$$
E_{1} = e_{1,1}e_{1,1}\cdot\textbf{e}_{1,1}\cdot e_{1,1}\cdot \textbf{e}_{1,2}\cdot e_{2,2}\cdot\textbf{e}_{2,2}\cdot e_{2,2}\cdot\textbf{e}_{2,1}\cdot e_{1,1} \times e_{2,3} \times e_{3,3}\cdot\textbf{e}_{3,3}\cdot e_{3,3}e_{3,3}.
$$
Next we construct the multilinear monomial with variables corresponding to the elements in $E_{0}$

$$Z_{1} = w_{e,1}y_{e,1}x_{e,1}y_{e,2}x_{e,2}y_{e,3}x_{e,3}y_{e,4}x_{e,4}y_{e,5}w_{g, 2}y_{e,6}x_{e,5}y_{e,7}w_{e,3}$$
and alternate
$$
p_{1}(X, Y, Z) = \sum_{\sigma \in Sym(5)}(-1)^{\sigma}Z_{1, \sigma}.
$$
Now, we evaluate the designated variables $x_{e,i}$ as in the monomial $E_{1}$. It is clear that if we extend the evaluation of the variables in $Y$ and $W$ as in the monomial $E_{1}$ we get a nonzero value ($= e_{1,3}$) of the monomial which correspond to $id \in Sym(5)$ and hence a nonzero value of $p_{1}$. Consider now any permutation $\sigma \in Sym(5)$ which permutes the variables $x_{e,1},\ldots,x_{e,4}$ (and fixes $x_{e,5}$). Evaluating $W$ as above, there exists an evaluation of the set $Y$ such that the monomial $Z_{1,\sigma}$ gets a nonzero value. On the other hand, for any permutation $\tau \in Sym(5)$ which does not fix $x_{e,5}$, there is no value of $Y$ and $W$ which makes the evaluation of the monomial $Z_{1,\tau}$ nonzero.

\begin{rem}
Note that if we allow changing the values of the set $X$, there is no essential difference among the monomials and hence if there is an evaluation of $p_{1}(X, X^{\diamond}, Y, W)$ making the $id$ monomial nonzero, the same can be done for any $Z_{1, \sigma}$.
\end{rem}
We return to the general case where $p_{1}(X, X^{\diamond}, Y, W)$ is the alternating polynomial on the set $X$ constructed above. We know that if we evaluate the monomial $Z_{1}(X, X^{\diamond}, Y, W)$ as in $E_{1}$, we get a nonzero value (by construction), and hence a nonzero value for $p_{1}(X, X^{\diamond}, Y, W)$. Consider the generalized polynomial $p_{1}(\hat{X}, X^{\diamond}, Y, W)$ where the values of $X$ are as in $E_{1}$.
\begin{defn}\label{nonvanishing permutations}
We denote by  $Ad_{Sym(d_{e})}(p_{1}(\hat{X}, X^{\diamond}, Y, W))$ the set of all permutations $\sigma \in Sym(d_{e})$ for which there is an evaluation of the variables $X^{\diamond}$, $Y$ and $W$ such that the monomial $Z_{1, \sigma}(\hat{X}, X^{\diamond}, Y, Z)$ admits a nonzero value.
\end{defn}

Let us pause for a moment and explain what we want to do. With this terminology our goal will be, roughly speaking, to insert successively variables/polynomials $B_{1},\ldots,B_{l}$ in the polynomial $p_{1}(X, X^{\diamond}, Y, W)$ and get polynomials $$p_{(1+i)}(X, X^{\diamond}, Y, W, B_{1},\ldots,B_{i})$$ so that $Ad_{Sym(d_{e})}(p_{(1+i)}(\hat{X}, X^{\diamond}, Y, W, B_{1},\ldots,B_{i}))$ (see Definition \ref{nonvanishing permutations general} below extending \ref{nonvanishing permutations}) has fewer elements but is nonempty (i.e. the polynomial is a nonidentity). Also we want the coefficients  of polynomials $p_{(1+i)}$ and in particular $p_{(1+l)}(X, X^{\diamond}, Y, W, B_{1},\ldots,B_{l})$ to be in $\mathbb{Q}$. Our main objective in that step is to show that permutations $\sigma \in Ad_{Sym(d_{e})}(p_{(1+l)}(\hat{X}, X^{\diamond}, Y, W, B_{1},\ldots,B_{l}))$ preserve the subset of $\Lambda_{e}$ whose elements are represented by $S$, that is $S$-elements can be permuted only to $S$-elements.
The final step will be to insert a polynomial $q_{L}$ defined over $k$, which makes $$p_{(2+l)}(X, X^{\diamond}, Y, W, B_{1},\ldots,B_{l}, q_{L})$$ an identity of $A$. We will show the polynomial identity is defined over $k$ but not over $L$. In order to explain our plan more precisely we introduce some additional terminology.

Our starting point is the polynomial $p_{1}(X, X^{\diamond}, Y, W)$ constructed above and the evaluation of the designated variables $p_{1}(\hat{X}, X^{\diamond}, Y, W)$. As mentioned above, the set $\Lambda_{e}$ is decomposed into diagonal blocks of dimension $d_{i}\times d_{i}$ which correspond to right $H$-coset representatives which appear $d_{i}$-times. By the fixed evaluation $\hat{X}$ we label the variables of $X$ accordingly. We say a variable $x_{i} \in X$ corresponds to the $[k_{a}\times i]$-diagonal block if $\hat{x}_{i}$ belongs to that block.

We fix once for all a nonvanishing evaluation $\hat{X}$ of the set $X$, that is \linebreak $p_{1}(\hat{X}, X^{\diamond}, Y, W)$ is a generalized nonidentity of $A$. Note that by the alternation this can be any of the $d_{e}!$ possible nonvanishing evaluations of $X$. Once we have chosen the evaluation $\hat{X}$ of $X$ we fix a nonvanishing monomial and regard it as the monomial corresponding to the identity permutation in $p_{1}(\hat{X}, X^{\diamond}, Y, W)$. The following definition extends Definition \ref{nonvanishing permutations}. Notation as above.
 \begin{defn}\label{nonvanishing permutations general}
Let $0 \leq i \leq l$. We say a permutation $\sigma \in Sym(d_{e})$ is \textit{nonvanishing} in the polynomial $p_{(1+i)}(\hat{X}, X^{\diamond}, Y, W, B_{1},\ldots,B_{i})$,  if there is an evaluation of $p_{(1+i)}(\hat{X}, X^{\diamond}, Y, W, B_{1},\ldots,B_{i})$ so that the monomial $Z_{1, \sigma}(\hat{X}, X^{\diamond}, Y, Z, B_{1},\ldots,B_{i})$ admits a nonzero value.
\end{defn}

As mentioned earlier there are three type of basis elements in $\Lambda_{e}$, namely (Type $I$) elements that belong to blocks represented by coset representatives of $N_{G}(H)$, (Type $II$) basis elements that belong to blocks represented by coset representatives outside $N_{G}(H)$ and singletons and finally (Type $III$) basis elements that belong to blocks represented by coset representatives outside $N_{G}(H)$ and nonsingletons. By the correspondence established above between the designated variables and the elements of $\Lambda_{e}$, we have designated variables of Type $I$, Type $II$ and Type $III$.

Step $1$:
Consider the polynomial $p_{1}(X, X^{\diamond}, Y, W)$ and insert in each $e$-segment corresponding to an $e$-diagonal block represented by elements in $N_{G}(H)$ a product of variables $x_{h}$, (at least) one for each $h \in H$ and such that the homogeneous degree of the product is $e$ (we may need to add an additional variable). If we denote by $T_{H}$ the set of the inserted $H$-variables and by $p_{2}(X, X^{\diamond}, Y, W, T_{H})$ the obtained polynomial, because diagonal blocks represented by elements in $N_{G}(H)$ have only $H$-pages, the nonzero evaluation of $Z_{1}(\hat{X}, X^{\diamond}, Y, W)$ can be extended to a nonzero evaluation of $Z_{2}(\hat{X}, X^{\diamond}, Y, W, T_{H})$ and hence $p_{2}(X, X^{\diamond}, Y, W, T_{H})$ is a nonidentity.
\begin{prop}
Notation as above. Every nonvanishing permutation of $$p_{2}(\hat{X}, X^{\diamond}, Y, W, T_{H})$$ sends variables of Type $I$ (resp. $II$, $III$) to values of Type $I$ (resp. $II$, $III$).
\end{prop}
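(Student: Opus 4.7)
The plan is to reduce everything to two elementary facts about the blocks of $A$: in the block $[k_{a}\times i]$ the set of $G$-degrees occurring is exactly $(k_{a}g_{i})^{-1}H(k_{a}g_{i})$, and this set contains $H$ if and only if $k_{a}g_{i}\in N_{G}(H)$, i.e.\ iff the block is of Type~I; moreover, every basis element whose $G$-degree lies in $H$ (in particular every $e$-element) has both row- and column-indices in the \emph{same} diagonal block, because $g_{\alpha}^{-1}h g_{\beta}\in H$ forces $Hg_{\alpha}=Hg_{\beta}$. I will first show that in any nonzero evaluation of a monomial $Z_{2,\sigma}$ every segment of the monomial is confined to a single diagonal block: between two designated variables inside a segment one sees only $e$-frames $Y$ and (in a Type~I segment) the inserted $T_{H}$, and all of these have degrees in $H$, so by the second fact the path cannot leave the diagonal block that contains the first designated variable.

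For the Type~I part I will argue as follows. Let $\sigma$ be a nonvanishing permutation and pick a segment of $p_{2}$ originally carrying a Type~I designated variable. By the confinement above, the whole segment sits inside some diagonal block $B$; by construction the $T_{H}$ variables in this segment carry degrees covering every $h\in H$, each of which must be matched against a basis element of $B$ of the same degree. Hence the degree set of $B$ contains $H$, which by the first key fact forces $B$ to be a Type~I block. Therefore every designated variable of Type~I is evaluated by $\sigma$ on an $e$-element belonging to a Type~I block.

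For Types II and III the distinguishing feature is carried by the bridges $W$ between consecutive segments. By Lemma~\ref{bridges of singletons and nonsingletons}, a bridge of degree $h\in H\setminus\{e\}$ between two distinct diagonal blocks can exist only when those blocks are related, i.e.\ lie in the same $H$-double coset. A direct check shows that two distinct Type~I blocks are never related (since $\nu_{1}^{-1}H\nu_{2}=H\nu_{1}^{-1}\nu_{2}$ meets $H$ only if $H\nu_{1}=H\nu_{2}$), nor is a Type~I block ever related to a Type~II or Type~III block. Combined with the fact that Type~II blocks are singletons of the relation, this shows that every $H$-degree bridge appearing in $E_{0}$ sits between two Type~III blocks in the same double coset; by the way the tuple $\mathfrak{g}$ has been arranged (equal representatives adjacent, $K$-factoring, etc.), at least one such $H$-bridge connects each Type~III block to a related Type~III partner. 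Now suppose $\sigma$ sent a Type~III block $A$ to a Type~II block; then the bridge of degree $h\in H\setminus\{e\}$ between $A$ and one of its related partners $B$ would, after applying $\sigma$, have to connect a Type~II block to $\sigma(B)$. Since a Type~II block is a singleton of the relation, this is impossible unless $\sigma(A)=\sigma(B)$, which contradicts the injectivity of $\sigma$. Hence Type~III is preserved, and by the complementary pigeonhole (Type~I being already preserved) so is Type~II.

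The main technical point — and the one that will require the most care — is the confinement statement: justifying that inside one segment the $e$-frames and the $T_{H}$ variables really cannot route a nonzero evaluation through off-diagonal blocks, which is what allows the $T_{H}$ argument to pin the whole segment to one block of the correct type. Once that is in place, the Type I argument is immediate from the degree-set characterisation, and the Type~II/III separation is a purely combinatorial consequence of the $H$-double-coset structure together with the fact that the bridges of $W$ keep the degrees they had in $E_{0}$.
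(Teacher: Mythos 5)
There is a genuine gap, and it sits exactly where you predicted: the ``confinement'' claim. Your justification --- that $g_{\alpha}^{-1}hg_{\beta}\in H$ forces $Hg_{\alpha}=Hg_{\beta}$ --- is false. From $g_{\alpha}^{-1}hg_{\beta}=h'$ one only gets $Hg_{\beta}=Hg_{\alpha}h'$, and this is the same $H$-coset only when $g_{\alpha}$ conjugates $h'$ back into $H$. It is true for elements of degree exactly $e$ (there $h=e$ and $g_{\alpha},g_{\beta}$ represent the same coset), but for degrees in $H\setminus\{e\}$ it fails precisely in the \emph{related} (nonsingleton) situation: part (3) of Lemma \ref{bridges of singletons and nonsingletons} exhibits basis elements of degree $h'\in H$ joining two \emph{distinct} related diagonal blocks --- that is the whole point of the equivalence relation and of the Type~III class. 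Consequently your confinement statement holds only when the block being visited is a singleton, and your Type~I argument (``the $T_{H}$ degrees must all be pages of $B$, hence $B$ normalizes $H$'') does not exclude the possibility that a Type~I segment is evaluated on a Type~III block, through which the $T_{H}$ portion can wander into related blocks and back. Your Type~II/III step also ends on a non sequitur: ``$\sigma(A)=\sigma(B)$ contradicts injectivity'' --- but $\sigma$ is a permutation of the $e$-basis elements, not of the blocks (that nonvanishing permutations permute blocks is only established in Step~2 of the paper, after this proposition), so two segments landing in the same block is not by itself a contradiction.

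The paper's proof avoids both problems by running the two mechanisms in the opposite order and leaning on a counting argument. First, each Type~III segment is adjacent in $E_{1}$ to a $W$-bridge of degree in $H\setminus\{e\}$, which is incompatible with evaluating that segment on a singleton block; hence Type~III variables land on nonsingleton (= Type~III) values, and by bijectivity singleton variables land on singleton values. Second, for a Type~I segment evaluated on a Type~II block the confinement you want \emph{does} hold --- not for your reason, but because Type~II blocks are singletons, so no $H$-degree element can leave them --- and then the $T_{H}$ insertion covering all of $H$ forces the block's representative to normalize $H$, a contradiction. A final application of bijectivity separates Types~I and~II. If you replace your false general confinement fact by the correct singleton version and add the counting step, your argument collapses into the paper's.
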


\begin{proof}
We show first a nonsingleton variable cannot be evaluated on a singleton element of $\Lambda_{e}$: Indeed, as chosen for nonsingletons, one of the bridges (a variable in $W$) from left or right, are $H$-elements and this is not compatible with a singleton evaluation. In order to prove that singleton variables in $N_{G}(H)$ cannot be evaluated on singletons represented by elements outside $N_{G}(H)$, note that because of the insertion of the $H$-variables in the polynomial, these variables $H$ must be evaluated with elements in $\Lambda$ from the corresponding block. But this is impossible because the evaluation belongs to an element that does not normalize $H$ so not all pages are from $H$, or equivalently, some of the $H$-elements do not belong to the diagonal $e$-block.
\end{proof}
Step $2$: We have shown that $N_{G}(H)$-variables must be evaluated only with basis elements represented by $N_{G}(H)$. Suppose $\sigma \in Sym(d_{e})$ is such nonvanishing permutation. By the proposition, $\sigma$ restricts to permutations of the disjoint sets that correspond to $(a)$ elements that are represented by $N_{G}(H)$ $(b)$ variables represented by representatives not in $N_{G}(H)$ and singletons $(c)$ variables represented by representatives not in $N_{G}(H)$ and nonsingletons. Now, applying Lemma \ref{equal derivative II} and in particular its last part, there is an element in $g_{0} \in N_{G}(H)$ such that $g_{0}\mathfrak{g}$ determines the same $H$-coset representatives as determined by the permutation $\sigma$. Note that in particular, nonvanishing permutations permute the diagonal blocks because by left multiplication by $g_{0}$,  equal representatives are mapped to equal representatives.

Applying Lemma \ref{three equivalent conditions} and Corollary \ref{Definition of the subgroup K} we conclude that $g_{0} \in K$. It follows that the diagonal blocks represented by $K$ correspond to values represented by $K$ and so, in particular, the blocks represented by $S$ are mapped to $K$-blocks. This completes the second step.

Our next step is to insert additional polynomials in $p_{2}(X, X^{\diamond}, Y, W, T_{H})$ so that nonvanishing permutations send $S$-representatives to $S$-values. For that we consider the coset representatives that are in $K \setminus S$ and their action on $M(H)$. Before we do it let us recall in some detail how the binomial identities of a twisted group algebra $F^{\alpha}H$ are related to the Schur Multiplier $M(H)$.

\subsection {}\textit{ Hopf formula for $M(H)$ and binomial identities of $F^{\alpha}H$.}

Let
$$
1 \rightarrow R \rightarrow F_{H} \rightarrow H \rightarrow 1
$$
be a presentation of $H$ where $F_{H}$ is the free group on elements $\{x_{h}: h \in H\}$ and the map $\nu: F_{H} \rightarrow H$ maps $x_{h}$ to $h$. By the Hopf formula we have $M(H) \cong (R \cap [F_{H}, F_{H}])/[R, F_{H}]$. Exchanging elements of $R$ and $F_{H}$ it is not difficult to see that each class in $M(H)$ is represented by an element of the form $$z =(x_{h_1}\cdots x_{h_r})(x_{h_\tau(1)}\cdots x_{h_\tau(r)})^{-1}$$
where $z \in R$ and $\tau \in Sym(r)$. Furthermore, by adding some extra elements of $F_{H}$ on the right of the words $(x_{h_1}\cdots x_{h_r})$ and $(x_{h_\tau(1)}\cdots x_{h_\tau(r)})$ we may assume each of them is in $R$.

Now, let $\alpha \in H^{2}(H, F^{*})$ and let $\pi(\alpha)(z) = \zeta \in F^{*}$, a root of unity. Based on this data we construct an $H$-graded polynomial identity of the twisted group algebra $F^{\alpha}H$. Let $F\langle X_{H} \rangle$ be the free $H$-graded algebra generated by elements $x_{i, h_{i}}, h_{i} \in H, i\in \mathbb{N}$.
\begin{rem}
Our notation of the variables of $X_{H}$ and of the generators of the free group $F_{H}$ is almost the same. This is clearly an abuse of notation especially because on the one hand the elements in $F_{H}$ are invertible and on the other hand the variables in $F\langle X_{H} \rangle$ they are not. Nevertheless, we adopt this notation because the polynomial identities we construct in $F\langle X_{H} \rangle$ are induced by representatives $z \in R \cap [F_{H}, F_{H}]$.
\end{rem}
The following proposition is proved in (\cite{AljHaileNat}, Section $2$).
\begin{prop} \label{binomial identities and Schur multiplier}
Let $\alpha$, $z$, $\zeta$ as above. Then the following binomial
$$
\beta_{\tau}(x_{1, h_1}, \ldots, x_{r, h_r}; \zeta) = x_{1,h_1} \cdots x_{r, h_r}- \zeta\cdot x_{\tau(1), h_\tau(1)}\cdots x_{\tau(r), h_\tau(r)}
$$
is a polynomial identity of $F^{\alpha}H$. Furthermore, the $T$-ideal of identities of $F^{\alpha}H$ is generated by these binomials.
\end{prop}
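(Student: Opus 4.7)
The first assertion follows by direct cocycle computation. An $H$-graded evaluation takes $x_{i,h_i}\mapsto c_i u_{h_i}$ with $c_i\in F$, giving
$$
\widehat{x_{1,h_1}\cdots x_{r,h_r}} \;=\; c_1\cdots c_r\cdot \mu_\alpha(h_1,\ldots,h_r)\, u_{h_1\cdots h_r},
$$
where $\mu_\alpha(h_1,\ldots,h_r):=\alpha(h_1,h_2)\alpha(h_1 h_2,h_3)\cdots\alpha(h_1\cdots h_{r-1},h_r)$. The map $x_h\mapsto u_h$ extends to a group homomorphism $\psi_\alpha\colon F_H\to (F^\alpha H)^\times$; its restriction to $R$ lands in $F^*$ (since $\psi_\alpha(R)\subset F^*\cdot u_e$) and kills $[R,F_H]$ because $F^*$ is central, thereby inducing precisely the map $\pi(\alpha)$ on $M(H)=(R\cap[F_H,F_H])/[R,F_H]$. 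Since $h_1\cdots h_r = h_{\tau(1)}\cdots h_{\tau(r)}$ (both factors of $z$ project to the same element of $H$), the two monomials of $\beta_\tau$ hit the same basis vector $u_{h_1\cdots h_r}$; the common prefactor $c_1\cdots c_r=c_{\tau(1)}\cdots c_{\tau(r)}$ cancels, and the remaining scalar ratio is exactly $\psi_\alpha(z)=\pi(\alpha)(\bar z)=\zeta$. Hence $\beta_\tau$ is an identity.

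For the generation claim let $I$ denote the $T$-ideal generated by all such binomials. In characteristic zero it suffices to check that every multilinear $H$-graded identity lies in $I$. Write a multilinear $p$ on variables $x_{1,h_1},\ldots,x_{r,h_r}$ as
$$
p \;=\; \sum_{\sigma\in Sym(r)} c_\sigma\, x_{\sigma(1),h_{\sigma(1)}}\cdots x_{\sigma(r),h_{\sigma(r)}},
$$
and pass to one $H$-graded component so that all surviving $\sigma$ satisfy $h_{\sigma(1)}\cdots h_{\sigma(r)}=g$ for a common $g\in H$. For each such $\sigma$ the word
$$
w_\sigma \;=\; (x_{h_1}\cdots x_{h_r})(x_{h_{\sigma(1)}}\cdots x_{h_{\sigma(r)}})^{-1}
$$
belongs to $R$, and it lies in $[F_H,F_H]$ automatically since both factors use the same multiset of generators and therefore agree in $F_H^{ab}$. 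Hence $\bar w_\sigma\in M(H)$ is defined, $\zeta_\sigma:=\pi(\alpha)(\bar w_\sigma)\in F^*$, and the corresponding binomial $\beta_\sigma$ is a generator of $I$.

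Using each $\beta_\sigma$ to rewrite the $\sigma$-monomial as $\zeta_\sigma^{-1}\cdot x_{1,h_1}\cdots x_{r,h_r}$ modulo $I$, the polynomial $p$ collapses to
$$
p \;\equiv\; \Bigl(\sum_{\sigma} c_\sigma\, \zeta_\sigma^{-1}\Bigr)\, x_{1,h_1}\cdots x_{r,h_r} \pmod{I}.
$$
Since the canonical monomial is \emph{not} an identity of $F^\alpha H$ — its evaluation on $(u_{h_1},\ldots,u_{h_r})$ is $\mu_\alpha(h_1,\ldots,h_r)\,u_g\neq 0$ — and $p$ is an identity, the bracketed scalar must vanish, forcing $p\in I$. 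The only point that needs some care is verifying that the word $w_\sigma$ already lies in $R\cap[F_H,F_H]$ so that Hopf's formula applies directly; this is the payoff of restricting to multilinear $H$-graded identities and using that $F_H^{ab}$ is the free abelian group on $\{x_h\}_{h\in H}$. With that observation in place, the rest is pure bookkeeping with the iterated cocycle $\mu_\alpha$ and the map $\pi(\alpha)$.
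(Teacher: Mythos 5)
Your proof is correct. The paper does not actually prove this proposition --- it cites \cite{AljHaileNat}, Section $2$ --- and your argument (evaluating on the one-dimensional homogeneous components via the iterated cocycle $\mu_\alpha$, realizing $\pi(\alpha)$ as the lift $\psi_\alpha$ restricted to $R\cap[F_H,F_H]$ modulo $[R,F_H]$, and then reducing an arbitrary multilinear $H$-homogeneous identity modulo the binomials to a scalar multiple of a single non-identity monomial) is essentially the standard proof given there, consistent with the formula $\zeta=\alpha(h_1,\ldots,h_r)\alpha(h_{\tau(1)},\ldots,h_{\tau(r)})^{-1}$ that the paper itself uses later.
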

\begin{rem}
\begin{enumerate}

\item
Note that if we replace $\zeta$ in $\beta_{\tau}(x_{1,h_1}, \ldots, x_{h_r}; \zeta)$ by a scalar $\zeta^{'} \neq \zeta$, then binomial $\beta_{\tau}(x_{1,h_1}, \ldots, x_{h_r}; \zeta^{'})$ is a nonidentity of $F^{\alpha}H$ and moreover its nonzero values in $F^{\alpha}H$ are invertible elements.
\item
In the sequel we will abuse notation even more and denote the variables of $X_{H}$ by $x_{h_i}$ with the implicit assumption that the variables are different even if $h_{i} = h_{j}$ in $H$.
\end{enumerate}
\end{rem}
Step $3$: Let $k_{i}$ a representative in $K \setminus S$. We know it appears $d$-times in $\mathfrak{g}$. Recall that $k_{i}$ does not normalize $B_{\alpha}$ and hence there is a binomial identity of $F^{\alpha}H$
$$\beta_{\rho}(x_{h_1},\ldots,x_{h_m}) = x_{h_{1}}\cdots x_{h_{m}} - x_{h_{\rho{(1)}}}\cdots x_{h_{\rho{(m)}}}$$ some $\rho \in Sym(m)$,
which is not an identity of $F^{\alpha^{k_{i}}}H$. Clearly, by adding one more variable if needed, we may assume the homogeneous degree of $\beta_{\rho}(x_{h_1},\ldots,x_{h_m}) $ is $e$.  We now proceed as in \cite{AljHaile}. Let $R_{d} = R(x_{1},\ldots,x_{d^{2}}, y_{1},\ldots,y_{d^{2}})$ be a Regev polynomial on $2d^{2}$ variables. Recall that this is a nonidentity central polynomial of the algebra of $d \times d$-matrices over $F$. The polynomial is alternating on the $x$ and $y$-variables respectively. We replace all but one variable by homogeneous variables of degree $e$ and a single variable, say $x_{1}$, by an element of degree $h \in H$. We denote the polynomial obtained by $R_{d, h}$. Note that the homogeneous degree of $R_{d, h}$ is $h \in H$. We can now replace each variable $x_{h_{i}}$ in $\beta_{\rho}(x_{h_1},\ldots,x_{h_m})$ by $R_{d, h_{i}}$ and get the polynomial $\beta_{\rho, d} = \beta_{\rho}(R_{d, h_{1}},\ldots,R_{d, h_{m}})$. Note that the homogeneous degree of $\beta_{\rho, d}$ is also $e$ and so can replace (by the $T$-operation) a variables of degree $e$ of $\beta_{\rho, d}$.

In the next lemma we explain how these binomial-central polynomials are used. Insert $\beta_{\rho, d}$ in the segment represented by $k_{i}\in K \setminus S$.
\begin{lem}
The polynomial $p_{3}(X, X^{\diamond}, Y, W, T_{H}, \beta_{\rho, d})$ is a nonidentity of $A$. Furthermore, if $\sigma \in Sym(d_{e})$ is a permutation which sends the variables corresponding to the block represented by $k_{i} \in K \setminus S$ to elements in $\Lambda_{e}$ which correspond to an $S$-block, then $\sigma$ is a vanishing permutation.
\end{lem}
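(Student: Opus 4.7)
My plan is to treat the two assertions separately, reducing each to a computation inside a diagonal $d\times d$ block of $A$, which carries the structure $F^{\alpha^{u}}H\otimes M_{d}(F)$ for the appropriate $u\in K\le N_{G}(H)$. The unifying observation is that $\beta_{\rho,d}$ is built from Regev central polynomials, so on such a block the $M_{d}(F)$-factors always evaluate to scalar matrices; consequently the vanishing behaviour of $\beta_{\rho,d}$ reduces to the vanishing behaviour of the underlying binomial $\beta_{\rho}$ inside the twisted group algebra $F^{\alpha^{u}}H$.

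For the first assertion I start from the nonvanishing evaluation of $p_{2}(\hat X,X^{\diamond},Y,W,T_{H})$ constructed in Step~2, and extend it to the new variables of $\beta_{\rho,d}$, which lie in the $k_{i}$-segment. Choosing matrix-unit inputs for each Regev block $R_{d,h_{j}}$ yields a value of the form $\mu_{j}\,u_{h_{j}}\otimes I_{d}$ with $\mu_{j}\neq 0$, so that
\[
\beta_{\rho,d}\;=\;\Bigl(\prod_{j=1}^{m}\mu_{j}\Bigr)\,\beta_{\rho}(u_{h_{1}},\ldots,u_{h_{m}})\otimes I_{d}
\]
inside $F^{\alpha^{k_{i}}}H\otimes M_{d}(F)$. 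Because $k_{i}\in K\setminus S$ does not normalize $B_{\alpha}$, the choice of $\rho$ together with Proposition~\ref{binomial identities and Schur multiplier} guarantees that $\beta_{\rho}$ is a nonidentity of $F^{\alpha^{k_{i}}}H$; choosing the $u_{h_{j}}$ appropriately makes $\beta_{\rho}(u_{h_{1}},\ldots,u_{h_{m}})$ a nonzero element of the $e$-component, giving the required nonzero extension of the evaluation. Hence $p_{3}$ is a nonidentity.

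For the second assertion, let $\sigma\in Sym(d_{e})$ transport the $d$ designated variables of the $k_{i}$-block to basis elements in some $S$-block. In $Z_{1,\sigma}$ the insertion $\beta_{\rho,d}$ sits between permuted $X$-values that now lie in this $S$-block, so for any chance of a nonzero value its variables must themselves be evaluated inside the corresponding block $F^{\alpha^{s}}H\otimes M_{d}(F)$. The key cohomological point is that the element $z\in M(H)$ associated to $\beta_{\rho}$ lies in $B_{\alpha}$ (its coefficient $\zeta$ equals $1$, i.e.\ $\pi(\alpha)(z)=1$), while $s\in S=N\cap K$ normalizes $B_{\alpha}$; hence the induced conjugation action satisfies $c_{s}(z)\in B_{\alpha}$ and therefore $\pi(\alpha^{s})(z)=\pi(\alpha)(c_{s}(z))=1$. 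Thus $\beta_{\rho}$ is also an identity of $F^{\alpha^{s}}H$. The Regev reduction from the previous paragraph then shows that $\beta_{\rho,d}$ vanishes in that block, regardless of the remaining free variables, and so $Z_{1,\sigma}(\hat X,\ldots)$ vanishes identically. Therefore such $\sigma$ is a vanishing permutation.

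The main technical step I expect to require care is the verification that Regev substitution really forces $\beta_{\rho,d}$ to reduce to $\bigl(\prod_{j}\mu_{j}\bigr)\,\beta_{\rho}(u_{h_{1}},\ldots,u_{h_{m}})\otimes I_{d}$ inside $F^{\alpha^{u}}H\otimes M_{d}(F)$. This depends on $R_{d}$ being a central polynomial of $M_{d}(F)$ together with a standard normalization of the cocycle $\alpha$ so that the $u_{h}$-factors can be pulled cleanly past the scalar matrices. Once this reduction is granted, both parts follow at once from the characterization of $B_{\alpha}$ and from the definition of $S$ as the subgroup of $K$ normalizing $B_{\alpha}$.
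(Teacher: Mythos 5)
Your proof is correct and follows essentially the same route as the paper: the nonidentity claim comes from $\beta_{\rho}$ being a nonidentity of $F^{\alpha^{k_{i}}}H$ together with the invertibility of the basic evaluation of $\beta_{\rho,d}$ in the $k_{i}$-block, and the vanishing claim comes from the fact that $s\in S$ normalizes $B_{\alpha}$, so that $\beta_{\rho}$ remains a binomial identity of $F^{\alpha^{s}}H$. The only difference is that you make explicit the Regev reduction of $\beta_{\rho,d}$ to $\bigl(\prod_{j}\mu_{j}\bigr)\,\beta_{\rho}(u_{h_{1}},\ldots,u_{h_{m}})\otimes I_{d}$ inside a diagonal block, which the paper leaves implicit.
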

\begin{proof}
The polynomial remains a nonidentity since the conjugation of the binomial $\beta_{\rho}(x_{h_1},\ldots,x_{h_m})$ by $k_{i}$ is a nonidentity of $F^{\alpha}H$ and moreover taking the basic evaluation, the value of the polynomial $\beta_{\rho, d}$ is an invertible element in the $k_{i}$th block. On the other hand, because an element $s \in S$ normalizes $B_{\alpha}$, we have that conjugating the $h$ elements of $\beta_{\rho}(x_{h_1},\ldots,x_{h_m})$ by $s$ yields a polynomial which is again a binomial identity of $F^{\alpha}H$ and hence such evaluation must vanish.
\end{proof}

We complete this step by inserting similar polynomials for all $k_{i} \in K \setminus S$. For simplicity we abuse notation and denote the resulting polynomial again by $p_{3}(X, X^{\diamond}, Y, W, T_{H}, \beta_{\rho, d})$. Note that the binomial we choose depends on $k_{i}$. As a result we obtain that elements of $S$ may be mapped only to $S$-values as long as the permutation is nonvanishing.

The final step, step $4$, is to insert in the first $e$-segment, namely the segment that corresponds to the representative $g_{1} = e$ with $d$ repetitions, a polynomial $q_{L}$ defined over the field $k$ so that the resulting polynomial $p_{4}(X, X^{\diamond}, Y, W, T_{H}, \beta_{\rho, d}, q_{L})$ is an identity of $A$, defined over $k$ but not over $L$.

The general construction in this step is somewhat similar to the one in the previous step.
Recall that by construction, the group $S$ consists of all elements $s \in K$ which normalize $B_{\alpha}$, that is if

$$\beta_{\tau}(x_{h_1},\ldots,x_{h_r}) = x_{h_{1}}\cdots x_{h_{r}} - x_{h_{\tau{(1)}}}\cdots x_{h_{\tau{(r)}}}, \tau \in Sym(r)$$ is an identity of $F^{\alpha}H$ then also
$$\beta_{\tau}(x_{s^{-1}h_1s},\ldots,x_{s^{-1}h_rs}) = x_{s^{-1}h_1s}\cdots x_{s^{-1}h_rs} - x_{s^{-1}h_{\tau{(1)}}s}\cdots x_{s^{-1}h_{\tau{(r)}}s}$$ is an identity of $F^{\alpha}H$.

Let us return to the map $\pi(\alpha) : M(H)\rightarrow F^{*}$ which corresponds to the $2$-cocycle $\alpha \in H^{2}(H, F^{*})$ by means of the Universal Coefficient Theorem. We recall that because $H$ is finite, the Schur multiplier is finite abelian which implies $Im(\pi(\alpha)) = \mu_{n}$, the group of $n$th roots of unity for some $n$. We recall also that because the group $S$ normalizes $B_{\alpha} = ker(\pi(\alpha))$ it acts on the group $\mu_{n}$. We would like to be more precise about how the action of $S$ on $\mu_{n}$ is defined.

Recall that by Proposition \ref{binomial identities and Schur multiplier} the $T$-ideal of identities of $F^{\alpha}H$ is generated by binomial identities and moreover if $\zeta \in \mu_{n}$ where $\mu_{n} = Im(\pi(\alpha)$ there exists a binomial identity of the form
$$\beta_{\tau}(x_{h_1},\ldots,x_{h_r}; \zeta) = x_{h_{1}}\cdots x_{h_{r}} - \zeta x_{h_{\tau{(1)}}}\cdots x_{h_{\tau{(r)}}},$$ for some $r$ and some $\tau \in Sym(r).$
Now, if $u_{h_1}$ and $u_{h_2}$ are representatives of $h_{1}$ and $h_{2}$ in $F^{\alpha}H$, applying the formula $u_{h_i}u_{h_j} = \alpha(h_{i}, h_{j})u_{h_ih_{j}}$ several times we have $u_{h_1}\cdots u_{h_r} = \alpha(h_{1},\ldots,h_{r})u_{h_{1}\cdots h_{r}}$ and from the fact that $\beta_{\tau}(x_{h_1},\ldots,x_{h_r}; \zeta)$ is a binomial identity of $F^{\alpha}H$ we have that
$$
\zeta = \alpha(h_{1},\ldots,h_{r})(\alpha(h_{\tau{(1)}},\ldots,h_{\tau{(r)}}))^{-1}.
$$
Let $s \in S$. Using this expression we set
$$
s(\zeta) = \alpha(s^{-1}h_{1}s,\ldots,s^{-1}h_{r}s)\alpha(s^{-1}h_{\tau{(1)}}s,\ldots,s^{-1}h_{\tau{(r)}}s)^{-1}.
$$
The fact that $S$ normalizes $B_{\alpha}$ says that the above action on $\mu_{n}$ is well defined, that is, it does not depend on the choice of the binomial identity. Recalling our notation $k = \mathbb{Q}(\mu_{n})^{S}$ we have a well defined surjective map
$$
S \rightarrow Gal(\mathbb{Q}(\mu_{n})/k) = \bar{S}
$$
and composing with the embedding of $\bar{S} \leq Gal(\mathbb{Q}(\mu_{n})/\mathbb{Q})$ we obtain a homomorphism
$$
\phi: S \rightarrow Gal(\mathbb{Q}(\mu_{n})/\mathbb{Q}).
$$

Next, we shall consider the action of the groups $N_{G}(H)$ and $Gal(\mathbb{Q}(\mu_{n})/\mathbb{Q})$ on $H^{2}(H, \mathbb{Q}(\mu_{n})^{*})$ and the induced action on the set of  multilinear binomials

$$ \{\beta_{\tau}(x_{h_1},\ldots,x_{h_r}; \zeta) = x_{h_{1}}\cdots x_{h_{r}} - \zeta x_{h_{\tau{(1)}}}\cdots x_{h_{\tau{(r)}}}\}_{r >1, \tau \in Sym(r), \zeta \in \mu_{n}}.$$

If $g \in N_{G}(H)$ we let $\alpha^{g}: H \times H \rightarrow F^{*}$ be the $2$-cocycle defined by $\alpha^{(g)}(h_1, h_2) = \alpha(g^{-1}h_{1}g, g^{-1}h_{2}g)$ whereas for $g \in Gal(\mathbb{Q}(\mu_{n})/\mathbb{Q})$ we let $\alpha^{[g]}(h_1, h_2) = g(\alpha(h_1, h_2))$. Note that with this notation the actions on $H^{2}(H, \mathbb{Q}(\mu_{n})^{*})$ coincide upon restriction to $S$ and so we write $\alpha^{s} = \alpha^{(s)} = \alpha^{[s]}$

\begin{lem} \label{different actions on binomials} The following hold for every element $s \in S$.

\begin{enumerate}
\item
For every binomial identity of $F^{\alpha}H$
$$\beta_{\tau}(x_{h_1},\ldots,x_{h_r}; \zeta) = x_{h_{1}}\cdots x_{h_{r}} - \zeta x_{h_{\tau{(1)}}}\cdots x_{h_{\tau{(r)}}}$$ where $\zeta \in \mu_{n} = Im(\pi(\alpha))$, $r > 1$ and $\tau \in Sym(r)$, the polynomial
$$\beta_{\tau}(x_{s^{-1}h_1s},\ldots,x_{s^{-1}h_{r}s}; s(\zeta)) = x_{s^{-1}h_{1}s}\cdots x_{s^{-1}h_{r}s} - s(\zeta) x_{s^{-1}h_{\tau{(1)}}s}\cdots x_{s^{-1}h_{\tau{(r)}}s}$$ is an identity of $F^{\alpha}H$.

\item
For every binomial identity of $F^{\alpha}H$
$$\beta_{\tau}(x_{h_1},\ldots,x_{h_r}; \zeta) = x_{h_{1}}\cdots x_{h_{r}} - \zeta x_{h_{\tau{(1)}}}\cdots x_{h_{\tau{(r)}}}$$ where $\zeta \in \mu_{n} = Im(\pi(\alpha))$, $r > 1$ and $\tau \in Sym(r)$, the polynomial
$$\beta_{\tau}(x_{h_1},\ldots,x_{h_r}, s(\zeta))$$ is an identity of $F^{\alpha^{s}}H$.

\item
For every binomial identity of $F^{\alpha}H$
$$\beta_{\tau}(x_{h_1},\ldots,x_{h_r}; \zeta) = x_{h_{1}}\cdots x_{h_{r}} - \zeta x_{h_{\tau{(1)}}}\cdots x_{h_{\tau{(r)}}}$$ where $\zeta \in \mu_{n} = Im(\pi(\alpha))$, $r > 1$ and $\tau \in Sym(r)$, the polynomial
$$\beta_{\tau}(x_{s^{-1}h_1s},\ldots,x_{s^{-1}h_rs}; \zeta)$$ is an identity of $F^{\alpha^{s^{-1}}}H$.

\end{enumerate}
Furthermore, let $g \in Gal(\mathbb{Q}(\mu_{n})/\mathbb{Q}) \setminus \bar{S}$, and let $\zeta_{n} \in \mu_{n}$ be a primitive $n$th root of unity. If $\beta_{\tau}(x_{h_1},\ldots,x_{h_r}; \zeta_{n})$ is a binomial identity of $F^{\alpha}H$ for some $r > 1$ and some $\tau \in Sym(r)$, then $$\beta_{\tau \in Sym(r)}(x_{h_1},\ldots,x_{h_r}, g(\zeta_{n}))$$ is a nonidentity of $F^{\alpha^{s}}H$, for every $s \in S$.

\end{lem}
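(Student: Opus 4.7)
The key reduction for all three parts $(1)$--$(3)$ is the one-dimensionality of each homogeneous component $(F^\alpha H)_h = F\cdot u_h$. It implies that a binomial $\beta_\tau(x_{h_1},\ldots,x_{h_r};\zeta)$ is an identity of $F^\alpha H$ if and only if the scalar equality $\alpha(h_1,\ldots,h_r) = \zeta\cdot\alpha(h_{\tau(1)},\ldots,h_{\tau(r)})$ holds in $F$, where $\alpha(h_1,\ldots,h_r)$ denotes the iterated $2$-cocycle product arising from $u_{h_1}\cdots u_{h_r} = \alpha(h_1,\ldots,h_r)\,u_{h_1\cdots h_r}$. Moreover, because the original binomial comes from a relator $z \in R\cap [F_H,F_H]$, we have $h_1\cdots h_r = h_{\tau(1)}\cdots h_{\tau(r)}$ in $H$, so both monomials land on the same basis vector after any evaluation on canonical generators.

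With this reduction, each of $(1)$, $(2)$, $(3)$ becomes a short unwinding. For $(1)$, since $s\in N_G(H)$ the conjugated tuple $(s^{-1}h_is)$ lies in $H^r$ with the same product, and the required scalar identity $\alpha(s^{-1}h_1s,\ldots,s^{-1}h_rs) = s(\zeta)\cdot\alpha(s^{-1}h_{\tau(1)}s,\ldots,s^{-1}h_{\tau(r)}s)$ is literally the definition of $s(\zeta)$ stated just before the lemma. For $(2)$, apply $\alpha^s(h_i,h_j) = \alpha(s^{-1}h_is,s^{-1}h_js)$ iteratively to get $\alpha^s(h_1,\ldots,h_r) = \alpha(s^{-1}h_1s,\ldots,s^{-1}h_rs)$, which again matches the definition of $s(\zeta)$. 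Part $(3)$ is the symmetric manipulation: $\alpha^{s^{-1}}(s^{-1}h_is,s^{-1}h_js) = \alpha(h_i,h_j)$, so the required relation collapses to the original hypothesis $\zeta = \alpha(h_1,\ldots,h_r)\alpha(h_{\tau(1)},\ldots,h_{\tau(r)})^{-1}$.

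For the ``Furthermore'' clause, part $(2)$ tells us that $\beta_\tau(x_{h_1},\ldots,x_{h_r};s(\zeta_n))$ is already an identity of $F^{\alpha^s}H$; subtracting it from $\beta_\tau(x_{h_1},\ldots,x_{h_r};g(\zeta_n))$ leaves the single monomial $(s(\zeta_n)-g(\zeta_n))\,x_{h_{\tau(1)}}\cdots x_{h_{\tau(r)}}$. Its canonical evaluation $x_{h_i}\mapsto u^{(s)}_{h_i}$ equals $(s(\zeta_n) - g(\zeta_n))\,\alpha^s(h_{\tau(1)},\ldots,h_{\tau(r)})\,u^{(s)}_{h_1\cdots h_r}$, and the $\alpha^s$-factor is a unit. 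Hence non-vanishing reduces to $g(\zeta_n)\neq s(\zeta_n)$, which follows from $g\notin\bar S$: the action of $\mathrm{Gal}(\mathbb{Q}(\mu_n)/\mathbb{Q})$ on a primitive $n$th root $\zeta_n$ is faithful, and the image $\phi(s)\in\bar S$ acts on $\zeta_n$ identically to $s\in S$, so $g\notin\bar S$ forces $g(\zeta_n)\neq s(\zeta_n)$ for every $s\in S$. The only genuinely delicate point in this whole plan is the tacit identification between the geometric action $\alpha\mapsto\alpha^{(s)}$ (from conjugation in $N_G(H)$) and the Galois action $\alpha\mapsto\alpha^{[s]}$ (on coefficients); once one grants their coincidence on $S$, as asserted in the excerpt, everything else is bookkeeping with the definition of $s(\zeta)$.
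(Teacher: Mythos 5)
Your proposal is correct and follows essentially the same route as the paper: the paper disposes of parts $(1)$--$(3)$ by appealing directly to the definition of $S$ and of $s(\zeta)$ (which is exactly your scalar identity $\alpha(s^{-1}h_1s,\ldots,s^{-1}h_rs)=s(\zeta)\,\alpha(s^{-1}h_{\tau(1)}s,\ldots,s^{-1}h_{\tau(r)}s)$, made legitimate by the one-dimensionality of homogeneous components and the invariance of graded identities under change of cocycle within a cohomology class), and proves the ``furthermore'' clause from part $(2)$ together with $g(\zeta_n)\neq s(\zeta_n)$, precisely as you do. Your write-up merely supplies the bookkeeping the paper leaves implicit; no gap.
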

\begin{proof}
The proofs of $1-3$ follow from the definition of $S$. The proof of the last part follows from $2$ and the fact that $g(\zeta_{n}) \neq s(\zeta_{n})$ for every $s \in S$.
\end{proof}

Let $$\beta_{\tau}(x_{h_1},\ldots,x_{h_r}; \zeta_{n}) = x_{h_{1}}\cdots x_{h_{r}} - \zeta_{n}x_{h_{\tau{(1)}}}\cdots x_{h_{\tau{(r)}}}$$
be an identity of $F^{\alpha}H$ where $\zeta_{n}$ is a primitive $n$th root of unity and consider the product
$$
m(X) = \Pi_{s \in S}\beta_{\tau}(x_{h_1},\ldots,x_{h_r}; s(\zeta_{n})).
$$
By the lemma, the product is an identity of $F^{\alpha^{s}}H$ for every $s \in S$ and is clearly defined over $k = \mathbb{Q}(\zeta_{n})^{S}$. As usual we assume the polynomial is multilinear. Our next goal is to show the following
\begin{prop} \label{Definition over L}
The identity $m(X)$ is not defined over $L$.
\end{prop}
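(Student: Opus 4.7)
The plan is to argue by contradiction using a Galois-theoretic observation. I would assume for contradiction that $m(X)$ were defined over $L$, writing $m(X)=\sum_{j}\gamma_{j}p_{j}$ with $\gamma_{j}\in F$ linearly independent over $L$ and each $p_{j}\in L\langle X_{G}\rangle$ an identity of $F^{\alpha}H$. The first step is to exploit that $k=\mathbb{Q}(\mu_{n})^{\bar{S}}\not\subseteq L$, so $L\cap\mathbb{Q}(\mu_{n})\subsetneq k$, and by the Galois correspondence I can pick $g\in Gal(\mathbb{Q}(\mu_{n})/L\cap\mathbb{Q}(\mu_{n}))\setminus\bar{S}$. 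Because $\mathbb{Q}(\mu_{n})/\mathbb{Q}$ is Galois (so $L$ and $\mathbb{Q}(\mu_{n})$ are linearly disjoint over their intersection) and $F$ is algebraically closed, $g$ extends to an $L$-automorphism $\tilde{g}$ of $F$.

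Next I would apply $\tilde{g}$ coefficientwise to the assumed decomposition. The right hand side becomes $\sum_{j}\tilde{g}(\gamma_{j})p_{j}$, still an identity of $F^{\alpha}H$ because the $p_{j}$'s have $L$-coefficients (fixed by $\tilde{g}$) and are identities. The left hand side transforms to
$$
\tilde{g}(m(X))=\prod_{s\in S}\beta_{\tau}\bigl(x^{(s)}_{h_{1}},\ldots,x^{(s)}_{h_{r}};(g\circ s)(\zeta_{n})\bigr),
$$
so the whole argument reduces to exhibiting a nonvanishing evaluation of this product in $F^{\alpha}H$.

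For each $s\in S$ the composition $g\circ s$ lies outside $\bar{S}$ (since $\bar{S}$ is a subgroup not containing $g$), so by the last clause of Lemma \ref{different actions on binomials} every factor is a nonidentity of $F^{\alpha}H$; the Remark following Proposition \ref{binomial identities and Schur multiplier} moreover guarantees that the nonzero values of such a ``wrong-root'' binomial are \emph{invertible} in $F^{\alpha}H$. Since the factors use pairwise disjoint variable sets (by multilinearity), I can independently choose nonvanishing evaluations of each factor; invertibility then forces the product of these evaluations to be invertible, in particular nonzero. This contradicts the fact that the right hand side vanishes on $F^{\alpha}H$, and so $m(X)$ cannot be defined over $L$.

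The main obstacle I anticipate is precisely this last passage — going from ``each factor is a nonidentity'' to ``the product is a nonidentity'' — which in a general ring would simply fail. The argument will depend crucially on the invertibility assertion in the Remark following Proposition \ref{binomial identities and Schur multiplier} and on the disjointness of the variable sets guaranteed by multilinearity. Extending $g$ to $\tilde{g}\in Aut(F/L)$ is a routine consequence of the algebraic closedness of $F$ together with the linear disjointness of $L$ and $\mathbb{Q}(\mu_{n})$ over their intersection.
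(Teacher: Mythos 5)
Your argument is correct and is essentially the paper's own proof in dual form: where the paper invokes Lemma~\ref{binomial identities} to keep $m(X)$ fixed and twist the algebra to $F^{\alpha^{g}}H$ for some $g\in Gal(\mathbb{Q}(\mu_{n})/(L\cap\mathbb{Q}(\mu_{n})))\setminus\bar{S}$, you extend $g$ to $\tilde{g}\in\mathrm{Aut}(F/L)$ and twist the polynomial instead. Both routes then close in the identical way, via the last clause of Lemma~\ref{different actions on binomials} together with the invertibility of nonzero values of the wrong-root binomial factors on their disjoint variable sets.
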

Once we know that, we shall proceed with the last step, that is, given a field $L$ which does not contain $k$, construct an identity of $A$ which is not defined over $L$. For the proof of the proposition we need a general lemma.

Let $C$ be a $G$-graded simple algebra over an algebraically closed field of characteristic zero. Let $P_{C} = (H, \gamma, \mathfrak{g})$ be a presentation of $C$. Denote by $\eta_{\gamma}: M(H) \rightarrow F^{*}$ the map defined on the Schur multiplier $M(H)$ which corresponds to $\gamma$ by means of the Universal Coefficient Theorem. Let $Im(\eta_{\gamma}) = \mu_{n}$ be the group of all $n$th roots of unity. Clearly we may assume the cocycle $\gamma$ has values in $F^{*}_{0}$ where $F_{0} = \mathbb{Q}(\mu_{n})$. It follows that we may consider the twisted group algebra $F_{0}^{\gamma}H$ and hence also the corresponding $G$-graded simple algebra $C_{\gamma}$ over $F_{0}$. Let $L$ be a subfield of $F_{0}$ and let $U= Gal(F_{0}/L)$ be the corresponding Galois group. Consider the twisted group algebras $F_{0}^{\alpha^{g}}H$ with cohomology classes $[\alpha^{g}] \in H^{2}(H, F_{0}^{*})$, $g \in U$, and let $C_{\gamma^{g}}$ be the corresponding $G$-graded simple algebra. Note that $[\gamma^{g}]$ is the cohomology class that corresponds to $g\circ \eta: M(H) \rightarrow F^{*}$. Let $p(X)$ be an identity of $C_{\gamma}$ and suppose $p$ is defined over $L$, that is $p(X) \in F \otimes_{L}Id_{L}(C_{\gamma})$. Explicitly, there exist identities $p_{i} \in Id_{L}(C_{\gamma})$ and scalars $b_{i}$ such that

$$
p = \sum_{i}b_{i}p_{i}.
$$

\begin{lem} \label{binomial identities}
Notation as above. The polynomial $p(X)$ is an identity of $C_{\gamma^{g}}$, for every $g \in U$.
\end{lem}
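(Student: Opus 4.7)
The plan is to exhibit, for each $g \in U$, a $G$-graded ring isomorphism $\widetilde{g}: C_\gamma \to C_{\gamma^g}$ that acts on scalars of $F_0$ as $g$, and then exploit the fact that the coefficients of the polynomials $p_i$ lie in $L = F_0^{U}$ to transport the vanishing of $p_i$ from $C_\gamma$ to $C_{\gamma^g}$.

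First I would construct $\widetilde g$. Recall $C_\gamma \cong F_0^\gamma H \otimes_{F_0} M_n(F_0)$ with $G$-grading determined by the tuple $\mathfrak g$, namely $(C_\gamma)_x = \mathrm{span}\{u_h \otimes e_{i,j} \mid x = g_i^{-1} h g_j\}$. Define $\widetilde g$ on basis elements by $\widetilde g(u_h \otimes e_{i,j}) = u_h \otimes e_{i,j}$, where on the right the symbol $u_h$ now refers to the canonical basis of $F_0^{\gamma^g} H$, and extend $g$-semilinearly by $\widetilde g(a z) = g(a)\, \widetilde g(z)$ for $a \in F_0$. To check multiplicativity it suffices to observe that the relation $u_{h_1} u_{h_2} = \gamma(h_1,h_2) u_{h_1 h_2}$ in $F_0^\gamma H$ is carried by $\widetilde g$ to $u_{h_1} u_{h_2} = g(\gamma(h_1,h_2)) u_{h_1 h_2} = \gamma^g(h_1,h_2) u_{h_1 h_2}$, which is precisely the defining relation of $F_0^{\gamma^g} H$. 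Since the formula for the homogeneous components depends only on $\mathfrak g$ and $H$, and not on scalars, $\widetilde g$ preserves the $G$-grading.

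With $\widetilde g$ in hand, fix an identity $p_i \in Id_L(C_\gamma)$ and an arbitrary graded evaluation $c_1, \ldots, c_r$ in $C_{\gamma^g}$. Setting $c'_j = \widetilde{g}^{-1}(c_j) \in C_\gamma$, the hypothesis gives $p_i(c'_1, \ldots, c'_r) = 0$. Apply $\widetilde g$ to both sides: since the coefficients of $p_i$ lie in $L$ and are therefore fixed by $g$, each monomial $a_J c'_{J_1} \cdots c'_{J_{r_J}}$ is carried to $a_J c_{J_1} \cdots c_{J_{r_J}}$, and summing yields $p_i(c_1, \ldots, c_r) = 0$. Thus $p_i$ is an identity of $C_{\gamma^g}$, and because $p = \sum_i b_i p_i$ the same holds for $p$.

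The main content lies in the construction of $\widetilde g$ and the verification that it is a $G$-graded $g$-semilinear ring isomorphism $C_\gamma \to C_{\gamma^g}$; once that is established, the transport of the identity is a direct computation based on the $U$-invariance of the coefficients of the $p_i$.
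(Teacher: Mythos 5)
Your proof is correct and is essentially the paper's argument in a cleaner package: the paper directly compares evaluations on basis elements of $C_{\gamma}$ and $C_{\gamma^{g}}$, observing that the structure constants $\lambda_{\sigma}$ get replaced by $\lambda_{\sigma}^{g}$ while the coefficients $\beta_{\sigma}\in L$ are $U$-invariant, which is precisely the statement that your basis-preserving $g$-semilinear map $\widetilde{g}$ is a graded isomorphism fixing the coefficients of the $p_{i}$. Your formulation has the mild advantage of handling arbitrary graded evaluations rather than only basis evaluations of multilinear polynomials, but the underlying mechanism is the same.
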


\begin{proof}
It is sufficient to show that each $p_{i}$ is an identity of $C_{\gamma^{g}}$. Replacing $p_{i}$ by $p$ we let
$$
p = \sum_{\sigma}\beta_{\sigma}m_{\sigma}
$$
where $\beta_{\sigma} \in L$ and $m_{\sigma}$ are monomials.
Moreover we can assume $p$ is multilinear and so every evaluation of the variables on basis elements $u_{h}\otimes e_{i,j} \in C_{\gamma^{g}}$ yields the same value as the corresponding evaluation on $C_{\gamma}$ where a scalar $\lambda_{\sigma} \in \mu_{n}$ is replaced by $\lambda^{g}_{\sigma}$. Because $p$ is an identity of $C_{\gamma}$ we have $\sum_{\sigma}\beta_{\sigma}\lambda_{\sigma}T_{\sigma} = 0$ where we may assume $T_{\sigma} = 1 \otimes e_{1,1}$ or zero.
Now, if we evaluate the polynomial $p$ on $C_{\gamma^{g}}$, we obtain
$$
\sum_{\sigma}\beta_{\sigma}\lambda^{g}_{\sigma}T_{\sigma} = \sum_{\sigma}(\beta_{\sigma}\lambda_{\sigma})^{g}T_{\sigma} = 0
$$
because the $\beta's$ are $U$ invariant.

\end{proof}

We proceed now with the proof of Proposition \ref{Definition over L}.

\begin{proof}
Suppose the converse holds. By Lemma \ref{binomial identities} and the paragraph preceding it, replacing $L$ in the lemma by the field $L\cap \mathbb{Q}(\mu_{n})$, the polynomial $m(X)$ is an identity of $F^{\alpha^{g}}H$ for every element $g \in Gal(\mathbb{Q}(\mu_{n})/(L\cap \mathbb{Q}(\mu_{n})))$ where here the action of $Gal(\mathbb{Q}(\mu_{n})/(L\cap \mathbb{Q}(\mu_{n})))$ on $\alpha$ is induced by the action on its values, namely on $\mu_{n}$.

Now if $g \in Gal(\mathbb{Q}(\mu_{n})/(L\cap \mathbb{Q}(\mu_{n}))) \setminus \bar{S}$, the value $g(\zeta_{n}) \neq s(\zeta_{n})$ for every $s \in S$ and hence invoking the last part of Lemma \ref{different actions on binomials} each binomial factor in $m(X)$ is a nonidentity of $F^{\alpha^{g}}H$. We claim also the product
$$
m(X) = \Pi_{s \in S}\beta_{\tau}(x_{h_1},\ldots,x_{h_r}; s(\zeta_{n}))$$
is a nonidentity of $F^{\alpha^{g}}H$. Indeed, every nonzero value of $\beta_{\tau}(x_{h_1},\ldots,x_{h_r}; s(\zeta_{n}))$ is invertible in $F^{\alpha^{g}}H$ and the claim follows.
\end{proof}
We would like to use the polynomial $m(X)$ for the construction of an identity of $A$, defined over $k$ but not over $L$.
We replace every variable of $m(X)$ by a central polynomial $R_{d, h_{i}}$ and denote the polynomial by $q_{L}$. We insert the polynomial $q_{L}$ in the first $e$-segment of $p_{3}(X, X^{\diamond}, Y, W, T_{H}, \beta_{\rho, d})$, namely the segment which corresponds to the diagonal block $d \times d$ represented by the trivial $H$-coset representative $e$. We denote the resulting polynomial by $p_{4}(X, X^{\diamond}, Y, W, T_{H}, \beta_{\rho, d}, q_{L})$.
\begin{prop}
The polynomial $p_{4}(X, X^{\diamond}, Y, W, T_{H}, \beta_{\rho, d}, q_{L})$ is an identity of $A$.
\end{prop}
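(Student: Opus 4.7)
The plan is to show that on any $A$-substitution, every monomial surviving in the alternation over $\mathrm{Sym}(d_e)$ evaluates to zero. Fix a substitution. Because the designated variables $X$ are alternated, the only $\sigma\in\mathrm{Sym}(d_e)$ that can contribute are the nonvanishing ones. By the third step of the construction (the insertion of the $\beta_{\rho,d}$'s), every such $\sigma$ must map variables attached to $S$-blocks to values lying in $S$-blocks. In particular the first $e$-segment, which is attached to the representative $e\in S$ and where $q_L$ was inserted, gets sent by $\sigma$ into some $S$-block represented by an element $s\in S$.

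Next I would identify the internal $G$-graded structure of the $s$-block. Since $s\in S\subseteq N_G(H)$ and its frequency in $\mathfrak{g}$ equals $d$, that block has all its pages in $H$ and, as a graded subspace of $A$, is $G$-graded isomorphic to $F^{\alpha^s}H\otimes_F M_d(F)$, with the homogeneous degree of $u^{(s)}_h\otimes e_{i,j}$ being $h\in H$. On any substitution that makes $Z_{1,\sigma}$ nonzero, each central polynomial $R_{d,h_i}$ appearing inside $q_L$ must take a value of the form $\lambda_i\,(u^{(s)}_{h_i}\otimes I_d)$ with $\lambda_i\in F$; indeed $R_d$ is central on the matrix coordinate, so only the twisted group algebra coordinate is free. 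Consequently the evaluation of $q_L=m(R_{d,h_1},\ldots,R_{d,h_r})$ inside the $s$-block reduces, up to an overall scalar factor, to the evaluation of the binomial polynomial $m(X)$ in the twisted group algebra $F^{\alpha^s}H$.

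At this point the argument collapses to a single invocation of Lemma \ref{different actions on binomials}. By construction
$$
m(X)=\prod_{s_0\in S}\beta_\tau(x_{h_1},\ldots,x_{h_r};s_0(\zeta_n)),
$$
and part $(2)$ of that lemma says the factor corresponding to $s_0=s$ is an identity of $F^{\alpha^s}H$. Hence that factor, the whole product, and therefore $q_L$, all vanish on the given substitution. This kills $Z_{1,\sigma}$ for every nonvanishing $\sigma$, so $p_4$ evaluates to zero and is an identity of $A$.

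The step I expect to be the main obstacle is the reduction carried out in the second paragraph: one must justify rigorously that, even though $q_L$ sits inside a much longer monomial which jumps between many different $e$-blocks via the bridges $W$ and the frames $Y$, the portion produced by $q_L$ genuinely sees only the internal multiplication of the $s$-block. Once it is verified that the surrounding bridges and frames force the $R_{d,h_i}$-values to live inside this single block and that centrality in the matrix factor lets the matrix coordinate be factored out, the appeal to Lemma \ref{different actions on binomials}(2) finishes the proof.
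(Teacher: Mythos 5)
Your proposal is correct and follows the same route as the paper: the paper's proof is just the two-line observation that (by Steps 1--3) every nonvanishing permutation sends $S$-variables to $S$-values, and that ``the presence of $q_{L}$ annihilates the evaluation also in that case,'' which is exactly the mechanism you spell out via Lemma \ref{different actions on binomials} and the fact that $m(X)=\prod_{s_0\in S}\beta_{\tau}(x_{h_1},\ldots,x_{h_r};s_0(\zeta_n))$ is an identity of $F^{\alpha^{s}}H$ for every $s\in S$. The reduction you flag as the main obstacle (that the segment carrying $q_L$ must be evaluated inside a single diagonal block, with the matrix coordinate factored out by centrality of $R_{d,h_i}$) is indeed the content left implicit in the paper's construction of $E_1$, and your sketch of it is the intended one; the only cosmetic point is that the block represented by $s$ realizes $F^{\alpha^{s^{-1}}}H$ rather than $F^{\alpha^{s}}H$, which is immaterial since the product over $s_0\in S$ contains the factor for $s_0=s^{-1}$ as well.
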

\begin{proof}
Let us show the generalized polynomial $p_{4}(\hat{X}, X^{\diamond}, Y, W, T_{H}, \beta_{\rho, d}, q_{L})$ has no nonvanishing permutations. Indeed, the variables represented by $S$ can be permuted in any nonvanishing permutation only to basis elements represented by $S$. The presence of $q_{L}$ annihilates the evaluation also in that case.
\end{proof}
\begin{prop}
The identity $p_{4}(X, X^{\diamond}, Y, W, T_{H}, \beta_{\rho, d}, q_{L})$ is not defined over $L$, whenever $L\cap \mathbb{Q}(\mu_{n}) \nsupseteq k$.
\end{prop}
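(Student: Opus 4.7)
The plan is to argue by contradiction, following the pattern of the proof of Proposition \ref{Definition over L}. Suppose that $p_{4}:=p_{4}(X, X^{\diamond}, Y, W, T_{H}, \beta_{\rho, d}, q_{L})$ is defined over $L$. Since all coefficients that actually appear in $p_{4}$ lie in $\mathbb{Q}(\mu_{n})$, the standard linear-algebra argument already used for $m(X)$ in the proof of Proposition \ref{Definition over L} allows us to replace $L$ by $L':=L\cap \mathbb{Q}(\mu_{n})$. By the hypothesis $L'\nsupseteq k = \mathbb{Q}(\mu_{n})^{S}$, so we may choose $g\in Gal(\mathbb{Q}(\mu_{n})/L')\setminus \bar{S}$, where $\bar{S}=Gal(\mathbb{Q}(\mu_{n})/k)$. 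Applying Lemma \ref{binomial identities} with $C_{\gamma}=A$, we conclude that $p_{4}$ is an identity of the $G$-graded simple algebra $A^{g}$ with presentation $(H,\alpha^{g},\mathfrak{g})$. It therefore suffices to derive a contradiction by exhibiting a nonvanishing evaluation of $p_{4}$ on $A^{g}$.

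The candidate is the standard evaluation used to show $p_{3}$ is a nonidentity of $A$, that is, the assignment corresponding to the identity permutation of the monomial $E_{1}$, extended to $q_{L}$ via the basic Regev-central evaluations of the blocks $R_{d,h_{i}}$. I would verify that transferring this evaluation verbatim to $A^{g}$ yields a nonzero value. The central observation is that the Galois twist $g$ affects only the scalar values of the cocycle, not its kernel: one has $B_{\alpha^{g}}=B_{\alpha}$, so the subgroups $K$ and $S$, the equivalence classes of $H$-coset representatives in $\mathfrak{g}$, and the singleton/nonsingleton data used in the construction of $p_{3}$ are the same for $A^{g}$ as for $A$. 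It follows that the frames $Y$, the bridges $W$, the extras $X^{\diamond}$ and the $H$-variables $T_{H}$ behave identically in $A^{g}$, and for each $k_{i}\in K\setminus S$ the polynomial $\beta_{\rho,d}$ inserted in Step $3$ remains an identity of $F^{\alpha^{g}}H$ while remaining an invertible-valued nonidentity on the $k_{i}$-block of $A^{g}$; indeed $(\alpha^{g})^{k_{i}}=\alpha^{gk_{i}}$ shares its binomial-identity structure with $\alpha^{k_{i}}$ by injectivity of the Galois action on $\mu_{n}$.

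The sole ingredient that behaves differently on $A^{g}$ is the factor $q_{L}$. On the $e$-block of $A^{g}$ the centrality of each block $R_{d,h_{i}}$ reduces the evaluation of $q_{L}$ essentially to $m(u_{h_{1}},\ldots,u_{h_{r}})$ in $F^{\alpha^{g}}H$. Since $g\notin \bar{S}$ we have $g(\zeta_{n})\neq s(\zeta_{n})$ for every $s\in S$, so by the last assertion of Lemma \ref{different actions on binomials} each binomial factor of $m$ is a nonidentity of $F^{\alpha^{g}}H$ whose nonzero values are invertible; therefore so is the product $m$. Consequently $q_{L}$ admits an invertible (in particular nonzero) value on the $e$-block of $A^{g}$, and the identity-permutation monomial of $p_{4}$ takes a nonzero value there. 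The uniqueness-of-frames argument used in the proof that $p_{3}$ is a nonidentity guarantees that no other permutation contributes to this evaluation, so $p_{4}$ itself is nonzero on $A^{g}$, contradicting the consequence of Lemma \ref{binomial identities} drawn in the first paragraph.

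The main technical hurdle is the careful verification in the second paragraph: one has to trace every component of $p_{3}$ through the Galois twist by $g$ and check that its identity or nonidentity status on each relevant block is preserved, so that the \emph{only} net effect of passing from $A$ to $A^{g}$ is to flip $q_{L}$ from a block-wise zero evaluation into a block-wise invertible one.
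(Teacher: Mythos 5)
Your proposal is correct and follows essentially the same route as the paper: twist $A$ by a Galois element $g\in Gal(\mathbb{Q}(\mu_{n})/(L\cap\mathbb{Q}(\mu_{n})))\setminus\bar{S}$, invoke Lemma \ref{binomial identities} to conclude that $p_{4}$ would be an identity of $A_{\alpha^{g}}$ if defined over $L$, and then contradict this by showing $q_{L}$ takes invertible central values on the $e$-block of $A_{\alpha^{g}}$ so that the nonvanishing monomial of $p_{3}$ survives. The only stylistic difference is that where you verify component-by-component that the evaluation of $p_{3}$ transfers to $A_{\alpha^{g}}$ (via $B_{\alpha^{g}}=B_{\alpha}$, etc.), the paper gets the same conclusion in one stroke from the fact that $p_{3}$ is defined over $\mathbb{Q}$ and hence its nonidentity status is Galois-invariant.
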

\begin{proof}
We consider an action of $Gal(\mathbb{Q}(\mu_{n})/\mathbb{Q})$ and in particular of $Gal(\mathbb{Q}(\mu_{n})/(L\cap \mathbb{Q}(\mu_{n})))$ on the family of $G$-graded simple algebras via the action on the $2$-cocycle $\alpha$ and this, via the action on its values. We denote by $A_{\alpha^{g}}$ the image of $A = A_{\alpha}$ induced by $g \in Gal(\mathbb{Q}(\mu_{n})/(L\cap \mathbb{Q}(\mu_{n})))$. The polynomial $p_{3}$ above, obtained prior to the insertion of $q_{L}$, is defined over $\mathbb{Q}$ and hence it is not an identity of $A_{\alpha^{g}}$. The insertion of $q_{L}$ produces an identity of $A$ and if defined over $L$ it is an identity of $A_{\alpha^{g}}$. Let us show this is false: The argument is similar to the proof above, namely that $m(X)$ is a nonidentity of the twisted group algebra $F^{\alpha^{g}}H$. Indeed, because the variables of the binomial factors of $m(X)$ are replaced by central polynomials of $d\times d$-matrices, the values of the corresponding factors in $q_{L}$ obtain nonzero central values and hence invertible, within the corresponding $e$-block. This shows their product is nonzero and hence if the polynomial $p_{3}$ before the insertion of $q_{L}$ had a monomial with nonzero value on $A_{\alpha^{g}}$, the same monomial will not vanish upon the insertion of $q_{L}$. This proves the proposition. The proof of Theorem \ref{field of definition theorem} is now complete.
\end{proof}

As mentioned in Remark \ref{Ehud Meir Example}, there exist PI algebras $B$ with minimal field of definition that strictly contains the minimal field of definition of $Id(B)$. We close this section by showing that under some mild conditions the $T$-ideal of identities of an algebra $B$ has a unique minimal field of definition. The proof was communicated to us by Ehud Meir.
\begin{prop}
Let $B$ be an associative finite dimensional algebra over $F$. Suppose $I = Id(B) \leq k\langle X \rangle$ is its $T$-ideal of identities defined over an algebraic and Galois extension $k$ of $\mathbb{Q}$. Then there exists a subfield $k_{min}$ of $k$ such that the following holds.
\begin{enumerate}
\item
$k_{min}$ is a field of definition of $Id(B)$.
\item
If $L \leq k$ is a field of definition of $B$ then $k_{min} \leq L$.
\item
The form of $Id(B)$ over $k_{min}$ is unique.

\end{enumerate}

\end{prop}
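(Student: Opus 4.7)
The plan is Galois descent. I first observe that any form of $I$ over a subfield $L\le k$ is automatically unique: by the faithful flatness of $F$ over $L$, if $J_L\subseteq L\langle X\rangle$ satisfies $J_L\otimes_L F=I$, expanding any $p\in I\cap L\langle X\rangle$ in an $L$-basis of $F$ containing $1$ forces $p\in J_L$, so $J_L=I\cap L\langle X\rangle$. This handles $(3)$ as soon as $k_{min}$ is produced. In particular $I_0:=I\cap k\langle X\rangle$ is the unique $k$-form of $I$, and it is a $T$-ideal in $k\langle X\rangle$ since the $T$-ideal property passes to intersections with subalgebras closed under substitutions.

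Now let $G:=Gal(k/\mathbb{Q})$ act coefficient-wise on $k\langle X\rangle$ via $\mathbb{Q}$-algebra automorphisms $\sigma_g$; this action commutes with all substitutions, hence permutes $T$-ideals. Define
\[
H:=\{g\in G:\sigma_g(I_0)=I_0\},\qquad k_{min}:=k^H.
\]
Since $I_0$ is determined in characteristic zero by its multilinear components $I_0^{(n)}$, each of which is finite-dimensional over $k$ and has coefficients in a finite Galois subextension of $k/\mathbb{Q}$, the subgroup $H$ is an intersection of closed stabilizers, hence closed in the Krull topology on $G$. For $(1)$, I would apply Galois descent to the (possibly infinite) Galois extension $k/k_{min}$ with group $H$: since the $H$-action on $I_0$ is locally finite, the identity $V^H\otimes_{k_{min}}k=V$ for $H$-stable $k$-subspaces reduces to the classical finite Galois case on each multilinear component. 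Setting $I_{min}:=I_0^H=I_0\cap k_{min}\langle X\rangle$, this is a $T$-ideal in $k_{min}\langle X\rangle$ satisfying
\[
I_{min}\otimes_{k_{min}}F=(I_{min}\otimes_{k_{min}}k)\otimes_k F=I_0\otimes_k F=I.
\]

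For $(2)$, let $L\le k$ be any field of definition with unique form $I_L=I\cap L\langle X\rangle$. The inclusion $I_L\otimes_L k\hookrightarrow I_0$ inside $k\langle X\rangle$ becomes the equality $I=I$ after tensoring over $k$ with $F$, and is itself an equality by the faithful flatness of $F/k$. Every $g\in Gal(k/L)$ acts trivially on $I_L$ and therefore preserves its $k$-span $I_0$, so $Gal(k/L)\subseteq H$; the Galois correspondence for the Galois extension $k/\mathbb{Q}$ then gives $k_{min}=k^H\subseteq k^{Gal(k/L)}=L$. Combined with Lemma \ref{field of definition for algebra is also for identities}, the same conclusion holds when $L$ is merely a field of definition of the algebra $B$.

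The main technical point will be ensuring that the infinite-degree case behaves well. This is addressed by the closedness of $H$, which is required so that the Galois correspondence recovers $H$ from $k_{min}$, together with the local finiteness of the coefficient-wise $G$-action on $k\langle X\rangle$, which is what allows one to replace infinite Galois descent by its classical finite counterpart on each multilinear component.
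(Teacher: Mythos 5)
Your argument is correct, and its skeleton --- take the stabilizer $H$ of the $k$-form inside $\mathrm{Gal}(k/\mathbb{Q})$, set $k_{min}=k^{H}$, and descend via Speiser's lemma --- is the same as the paper's. Two of your sub-arguments are genuinely different and worth noting. First, you prove uniqueness of the form by the elementary observation that any $L$-form $J_L$ with $J_L\otimes_L F=I$ must equal $I\cap L\langle X\rangle$ (expand in an $L$-basis of $F$ containing $1$); the paper instead identifies the set of forms with the non-abelian cohomology set $H^{1}(H_n,GL_d(k))$ and kills it by Hilbert 90. Your route is shorter, works uniformly for every intermediate field at once, and is the correct statement for forms \emph{embedded} in the free algebra, which is the notion actually in play; it also lets you dispense with the reduction to finitely many variables via \cite{AlBeKar} that the paper needs to make each homogeneous component finite dimensional. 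Second, you are more careful than the paper about the case where $k/\mathbb{Q}$ is infinite: the paper applies the Galois correspondence and Speiser's lemma as if the group were finite, whereas you verify that $H$ is closed (as an intersection of the open stabilizers of the finite-dimensional multidegree components) and that the coefficient-wise action is continuous, so that $k/k^{H}$ is genuinely Galois with group $H$ and infinite descent reduces to the finite case. One small imprecision: for the descent identity $I_0^{H}\otimes_{k_{min}}k=I_0$ you should decompose $I_0$ into its \emph{multihomogeneous} components (which do span $I_0$ as a direct sum of finite-dimensional $H$-stable pieces in characteristic zero), rather than its multilinear ones, which only generate $I_0$ as a $T$-ideal; alternatively, descend the multilinear part and use that the $T$-ideal it generates over $k_{min}$ extends to $I_0$ over $k$. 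Either fix is routine.
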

\begin{proof}

We note that the algebra $k\langle X\rangle$ is already defined over $\mathbb{Q}$ and is graded by $\mathbb{N}^{+}$ where the grading is determined by the length of the monomials.
It is sufficient to show that $I_{n}$, the $n$th homogeneous component of $I$, has a unique minimal field of definition $k_{n}$ and over $k_{n}$ $I$ has a unique form, for then $k_{min} = \sum k_n$ will be the unique field of definition of $I = Id(B)$ and $I = \oplus I_{n}$ the unique form.

In order to prove that we recall from (\cite{AlBeKar}, Theorem $8.2$) that we may assume $X$ is finite which implies $I_{n}$ is finite dimensional vector space over $k$.

Write $\Lambda = \text{Gal}(k/\mathbb{Q})$. The group $\Lambda$ acts on $k\langle X\rangle_n  = k\otimes_{\mathbb{Q}} \mathbb{Q}\langle X\rangle_n$ by its action on $k$. It thus also acts on the set of subspaces of $k\langle X\rangle_n$. Write $H_n = \text{stab}_\Lambda(I_n)$.
Claim: $I_n$ has a unique form over the unique minimal field of definition $k_n = k^{H_n}$.
Indeed, if $I_n$ is defined over $k^H$ for some $H<\Lambda$ then $I_n = V\otimes{_{k^H}} k$, where $V$ is a subspace of $k^H\langle X\rangle$. It is then easy to see that every $h\in H$ stabilizes $I_n$ as a subspace. This means that $H<H_n$ and therefore $k_n\subseteq k^H$. On the other hand, by a result due to Speiser (\cite{GilleSzamuely}, Lemma 2.3.8.) $I_{n}^{H_{n}}$ is a form of $I_{n}$ over $k_{n}$ and hence $k_{n}$ is its unique minimal field of definition. Let us show $I_{n}^{H_{n}}$ is the only form of $I_{n}$ over $k_{n}$. Fix a basis $v_1,\ldots v_d$ for $I_n$. Then for every $h\in H_n$ we have that $h(v_j) = \sum_i a_{ij}(h) v_i$ for some uniquely defined $a_{ij}(h)\in k$. The assignment $h\mapsto a_{ij}(h)$ then gives a one cocycle which represents a class in the non-abelian cohomology $H^1(H_n,GL_d(k))$. By the general version of Hilbert theorem $90$, namely for the group $U = GL_{d}(k)$, we have that the cohomology set $H^1(H_n,GL_d(k))$ is trivial. But by descent theory the set of forms of $I_{n}$ over $k_{n}$ is in bijection with the set $H^1(H_n,GL_d(k))$ and the claim follows.
\end{proof}

\section{generic algebras}

In this section we construct the algebra~ $\mathcal{U}$ and prove the Artin-Procesi's condition (Theorem \ref{main theorem}, $1-3$).

We fix a finite dimensional $G$-graded simple algebra $A$ over an algebraically closed field $F$ of characteristic zero. Let $k$ be the unique minimal field of definition of $A$ and let $k\langle X_{G} \rangle$ be the free $G$-graded algebra over $k$. As shown in the previous section, the field $k$ is a field of definition (in fact unique minimal) also for the $T$-ideal of $G$-graded identities of $A$ and hence there is $\Gamma = Id_{G,k}(A)$ with $\Gamma \otimes_{k}F = Id_{G,F}(A)$. This allows us to consider the relatively free algebra $\mathcal{A} = k\langle X_{G} \rangle/\Gamma$.

Next we recall that such an algebra $A$, namely finite dimensional $G$-graded simple, admits a nonidentity $G$-graded $e$-central polynomial $f$ (see \cite{Karasik}). Moreover, by its construction, $f$ is alternating on homogeneous sets of variables $X_{g}$ of cardinality $d_{g}$, $g \in G$, where $d_{g}$ is the dimension of $A_{g}$ as a vector space over $F$. Now, since $A$ is defined over $k$, writing $f =  \sum_{i}\beta_{i}p_{i}$ where the $p_{i}$'s are polynomials with coefficients in $k$ and the $\beta_{i}'s \in F^{*}$ are linearly independent over $k$, we have that every $p_{i}$ is $e$-central and at least one of them, say $p_{1}$, is a nonidentity. Note that by decomposing $f$ over $k$ we may have lost the alternation property of $f$ and so in order to retrieve it back, we insert in $p_{1}$ a Kemer polynomial of $A$, defined over $\mathbb{Q}$ as constructed in \cite{AB}. We obtain again an $e$-central polynomial of $A$ with coefficients in $k$ alternating on homogeneous sets of variables $X_{g}$ of cardinality $d_{g}$. We denote this polynomial again by $f$.

Let us briefly outline the proof of Theorem \ref{main theorem} (the detailed proof is contained in this section and in the next two sections): We would like to localize $\mathcal{A}$ on a central polynomial $m(X)$ which turns out to be a \textit{a refinement} of $f$, so that the localized algebra $m(X)^{-1}\mathcal{A}$ satisfies the polynomial identities of $A$ and no nonzero homomorphic image of $m(X)^{-1}\mathcal{A}$ satisfies more identities than $A$ (that is, $m(X)^{-1}\mathcal{A}$ is Azumaya in the sense of Artin-Procesi). This will be the $\mathcal{U}$ of the theorem. For the localization we will need to show that the nonzero elements of the $e$-center of $\mathcal{A} = k\langle X_{G} \rangle/\Gamma$ are nonzero divisors in $\mathcal{A} = k\langle X_{G} \rangle/\Gamma$. Note that this implies in particular that $R = m(X)^{-1}Z(\mathcal{A})_{e}$ is an integral domain (see \cite{Karasik}).

After proving the algebra $\mathcal{U} = m(X)^{-1}\mathcal{A}$ is Azumaya in the sense of Artin-Procesi, we show that the $G$-graded two sided ideals of $\mathcal{U}$ are in $1-1$ correspondence with the ideals of $R$, the $e$-center of $\mathcal{U}$ (part $4$ of the theorem). Azumaya in the sense of Artin-Procesi $+$ the $1-1$ correspondence between $G$-graded ideals and $e$-central ideals of $\mathcal{U}$ allows us to prove the algebra $\mathcal{U}$ is representing (part $5$ of the theorem).

We start with the construction of the polynomial $m(X)$, a refinement of $f$.

\begin{prop}\label{vanishing e-central polynomials on E}
There exists an $e$-central polynomial $m(X)$ of $A$, defined over $k$, which is an identity of every $G$-graded simple algebra $E$ with $Id_{G}(E) \varsupsetneq Id_{G}(A)$.
\end{prop}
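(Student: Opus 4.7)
The plan is to produce $m$ as the product $f \cdot \tilde{q}$ on disjoint sets of variables, where $f$ is the $e$-central polynomial over $k$ described in the paragraph preceding the proposition (alternating on each $X_{g}$ of cardinality $d_{g} = \dim_{F} A_{g}$), and $\tilde{q}$ is an auxiliary $e$-central polynomial over $k$ designed so as to vanish on every $G$-graded simple algebra $E$ having the same graded dimensions as $A$ but strictly more graded identities.

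First I would reduce to this case by a dimension argument. For every $G$-graded simple $E$ with $Id_{G,F}(E) \supsetneq Id_{G,F}(A)$, the graded Capelli polynomial $c_{d_{g}+1, g}$ lies in $Id_{G,F}(A) \subseteq Id_{G,F}(E)$, so $\dim_{F} E_{g} \leq d_{g}$ for every $g$. If the inequality is strict for some $g$, the alternation in $f$ on $X_{g}$ already forces $f \in Id_{G,F}(E)$, and the factor $\tilde{q}$ plays no role for such $E$. So it remains to design $\tilde{q}$ to kill every $E$ with $\dim_{F} E_{g} = d_{g}$ for all $g$. These form a finite family up to isomorphism: a presentation $(H',\alpha',\mathfrak{g}')$ of such $E$ has $H' \leq G$ (finitely many choices), $\mathfrak{g}'$ a multiset of bounded size (finitely many), and $[\alpha']$ in the finite group $H^{2}(H', F^{*}) = \mathrm{Hom}(M(H'), F^{*})$ (finite because $M(H')$ is a finite abelian group and $F^{*}$ is divisible). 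Label them $E_{1}, \ldots, E_{s}$ and set $J = \bigcap_{i=1}^{s} Id_{G,F}(E_{i}) \supsetneq Id_{G,F}(A)$.

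The heart of the construction is Galois descent on $J$. By Theorem \ref{field of definition theorem} applied to each $E_{i}$, there is a finite Galois extension $L/k$ over which every $E_{i}$ is defined. Because $Id_{G,F}(A)$ is defined over $k$ (Theorem \ref{field of definition theorem} again), the group $\mathrm{Gal}(L/k)$ permutes the $E_{i}$'s: a Galois twist of an algebra satisfying strictly more identities than $A$ still satisfies strictly more identities than $A$. Hence the ideal $J_{L} := \bigcap_{i} Id_{G,L}(E_{i})$ is $\mathrm{Gal}(L/k)$-stable, and applying finite Galois descent multilinear-component by multilinear-component gives $J_{L} = (J_{L} \cap k\langle X_{G}\rangle) \otimes_{k} L$. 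Combining with $J_{L} \otimes_{L} F = J \supsetneq Id_{G,k}(A) \otimes_{k} F$ yields a polynomial $q \in k\langle X_{G}\rangle$ with $q \in Id_{G,F}(E_{i})$ for every $i$ and $q \notin Id_{G,F}(A)$.

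Finally I would convert $q$ into an $e$-central polynomial $\tilde{q}$ by substituting each of its variables with a suitable graded central polynomial of $A$ defined over $k$ (available via \cite{Karasik}), preserving homogeneity. The resulting $\tilde{q}$ is $e$-central on $A$, a non-identity of $A$ (by the genericity of the Regev-style substitution), and still an identity of each $E_{i}$ (since substitutions into an identity yield an identity). Setting $m = f \cdot \tilde{q}$ on disjoint variable sets produces a polynomial over $k$, $G$-homogeneous of degree $e$, $e$-central and non-identity on $A$, and vanishing on every $E$ with $Id_{G,F}(E) \supsetneq Id_{G,F}(A)$ — either $f$ vanishes by alternation, or $\tilde{q}$ vanishes by design. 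The main obstacle is the Galois descent step, which critically relies both on the finiteness of bad $E$'s established above and on Theorem \ref{field of definition theorem} ensuring that $Id_{G,F}(A)$ is itself defined over $k$.
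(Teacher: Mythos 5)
Your route differs from the paper's in the key step: the paper produces, for each bad $E$, a separating polynomial $z_{E}\in Id_{G}(E)\setminus Id_{G}(A)$ \emph{defined over $\mathbb{Q}$} by re-running the argument of \cite{AljHaile} (finite dimensional $G$-graded simple algebras are determined by their $\mathbb{Q}$-identities), whereas you replace this by Galois descent on $J=\bigcap_{i}Id_{G,F}(E_{i})$ to get a single $k$-defined $q\in J\setminus Id_{G,F}(A)$. That descent argument is essentially sound (the twist of a bad $E_{i}$ is bad because $Id_{G,F}(A)$ is $\mathrm{Gal}(L/k)$-stable, and Speiser's lemma applies componentwise to the finite dimensional multilinear components), and it is an attractive way to avoid the ten-step case analysis of \cite{AljHaile}. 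However, you assert without proof that $J\supsetneq Id_{G,F}(A)$. This is not automatic for an intersection of finitely many strictly larger $T$-ideals; here it follows because $Id_{G}(A)$ is $G$-graded verbally prime (\cite{AljKarasik}): if $\bigcap_{i}Id_{G}(E_{i})=Id_{G}(A)$ then $\prod_{i}Id_{G}(E_{i})\subseteq Id_{G}(A)$ forces some $Id_{G}(E_{i})\subseteq Id_{G}(A)$, a contradiction. You need to say this (or give an equivalent argument); without it the descent has nothing nontrivial to descend.

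The second, more serious, gap is the final assembly. Converting $q$ into an $e$-central $\tilde{q}$ ``by substituting each of its variables with a graded central polynomial, preserving homogeneity'' does not work as stated: a variable of degree $g\neq e$ cannot be replaced by an $e$-central polynomial without changing its homogeneous degree, and even for $e$-variables such substitutions need not preserve the non-identity property (central substitutions can annihilate a polynomial, as with a commutator). Moreover, even granting an $e$-homogeneous non-identity $\tilde{q}$, the product $f\cdot\tilde{q}$ takes values $\lambda v$ with $\lambda\in F$ a value of $f$ and $v\in A_{e}$ a value of $\tilde{q}$, which is central only if $v$ is; so $m=f\cdot\tilde{q}$ need not be $e$-central. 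The correct move is the paper's: pass to a $G$-homogeneous component of $q$ that is still a non-identity of $A$, append a variable to make its degree $e$, and \emph{insert} it into $f$ in place of one of the alternated $e$-variables. The values of the resulting polynomial are among the values of $f$, hence central; it is a non-identity of $A$ because $f$ is alternating on a full homogeneous basis of $A_{e}$ and any nonzero value of the inserted polynomial extends to such a basis; and it vanishes on every bad $E$ either by the alternation (when some $\dim E_{g}<d_{g}$) or because $q\in Id_{G}(E)$. With these two repairs your argument goes through.
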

\begin{proof} For an algebra $E$ as above we have the following.

\begin{lem}
There is an homogeneous polynomial $z_{E}$, defined over $\mathbb{Q}$, which is an identity of $E$ but not of $A$.
\end{lem}
\begin{proof}
The proof of the lemma strongly relies on proof of the main Theorem in \cite{AljHaile} (see \textit{Theorem} in the introduction) which says that finite dimensional $G$-graded simple algebras are determined by their $T$-ideal of identities. Suppose $A$ and $B$ are two arbitrary finite dimensional $G$-graded simple algebras. Let $P_{A}$ and $P_{B}$ be presentations in the sense of Bahturin, Sehgal and Zaicev of $A$ and $B$ respectively. The proof of the theorem in \cite{AljHaile} consists of $10$ steps in which the authors construct nonidentities $f$ of one algebra, either $A$ or $B$, (in each step it may be a different one, depending on their possible structure) and use the assumption that $f$ is also a nonidentity of the other algebra in order to put constrains on its $G$-graded structure (in comparison to the $G$-graded structure of the former one). In general, the polynomials needed for the proof in  \cite{AljHaile} are not defined over $\mathbb{Q}$ but over a cyclotomic extension of $\mathbb{Q}$. Nevertheless, a key point in the proof is that if $A$ and $B$ share the same identities defined over $\mathbb{Q}$ (and hence the same $\mathbb{Q}$-nonidentities) this is sufficient to impose either $A \cong B$ or $Id_{G}(A) \nsubseteq Id_{G}(B)$ and $Id_{G}(B) \nsubseteq Id_{G}(A)$. Since we are assuming $Id_{G}(E) \varsupsetneq Id_{G}(A)$ the lemma follows.
For the reader convenience, let us provide some details.

We fix $P_{A} = (H, \mathfrak{g}, \alpha)$ and $P_{E} = (\hat{H}, \hat{\mathfrak{g}}, \hat{\alpha})$, presentations of $A$ and $E$ respectively. Recall that any $G$-graded simple algebra is \textit{basic} (or \textit{fundamental}) (see \cite{AB}) and hence it admits Kemer polynomials which alternate on sets of variables of degree $g$ of cardinality $d_{g}$, where $d_{g}$ is the dimension of the $g$-component. It follows immediately that $dim(E_{g})$ is bounded by $dim(A_{g})$ for otherwise a Kemer polynomial for $E$ would be an identity of $A$ contradicting our assumption. On the other hand, if $dim(E_{g}) < dim(A_{g})$ for some $g \in G$, because Kemer polynomials are defined over $\mathbb{Q}$, we may take $z_{A}$ $=$ any Kemer polynomial of $A$, which clearly vanishes on $E$. Thus for the proof of the lemma, it is sufficient to consider the case where $dim_{F}(E_{g}) = dim_{F}(A_{g})$, every $g \in G$.

\begin{rem}
It is known that if $G$ is abelian, the conditions above, namely
\begin{enumerate}
\item
$A$ and $E$ are finite dimensional $G$-graded simple

\item
$dim_{F}(E_{g}) = dim_{F}(A_{g})$, every $g \in G$
\item
$Id_{G}(A) \subseteqq Id_{G}(E)$

\end{enumerate}
already implies that $A$ and $E$ are $G$-graded isomorphic(!) and hence the lemma is proved in that case (see \cite{OfirDavid}). Interestingly, for arbitrary $G$ this is not known.
\end{rem}
Our next step is to show there exists $z_{E} \in Id_{G}(E) \setminus Id_{G}(A)$, defined over $\mathbb{Q}$, unless the $e$-blocks determined by $\mathfrak{g}$ and $\hat{\mathfrak{g}}$ have the same sizes and in particular $\mathfrak{g}$ and $\hat{\mathfrak{g}}$ have the same cardinality. Note that this will imply that $H$ and $\hat{H}$ have the same order. To see this, we order the tuples $\mathfrak{g}$ and $\hat{\mathfrak{g}}$ as in the proof following Lemma \ref{equal derivative II} and let $d_{1}, \ldots, d_{r}$ and $\hat{d}_{1},\ldots,\hat{d}_{s}$ be the corresponding multiplicities. We are assuming $d_{1} \geq d_{2} \cdots \geq d_{r} \geq 1$ and $\hat{d}_{1} \geq \hat{d}_{2} \cdots \geq \hat{d}_{s} \geq 1$. Note that the cardinality of $\mathfrak{g}$ (resp. $\hat{\mathfrak{g}}$) is $\sum_{i}d_{i}$ (resp. $\sum_{j}\hat{d}_{j}$). Note also that $\delta_{A} = (d^{2}_{1}, \ldots, d^{2}_{r})$ and $\delta_{E} = (\hat{d}^{2}_{1},\ldots,\hat{d}^{2}_{s})$ are partitions of $dim_{F}(A_{e}) = dim_{F}(E_{e})$. We want to show a suitable $z_{E}$ exists unless the partitions are equal.
Suppose first $\delta_{A} \precneqq \delta_{E}$ and construct a monomial for the algebra $E$ with designated variables, frames and bridges as in \cite{AljHaile}. Alternating the designated variables we obtain a nonidentity $p$ of $E$. Moreover, in any nonzero evaluation, because of the alternation, we must evaluate all designated variables on the entire $e$-component of $E$ or $A$. We claim $p$ is an identity of $A$ contradicting the assumption $Id_{G}(A) \subseteqq Id_{G}(E)$. Indeed, there is no nonvanishing evaluation of $p$ on $A$ as long as an $e$-segment of $p$ is evaluated on $e$-elements which belong to different $e$-blocks which must be the case if $\delta_{E} \succneqq \delta_{A}$.
A similar argument can be used also in case $\delta_{A} \succneqq \delta_{E}$. Indeed, as above, we may construct a nonidentity $z_{E}$ of $A$, defined over $\mathbb{Q}$, which is an identity of $E$.

In the next steps, as they appear in \cite{AljHaile}, we show a suitable $z_{E} \in Id_{G}(E) \setminus Id_{G}(A)$ (i.e. defined over $\mathbb{Q}$) exists unless we have $H = \hat{H}$ and $\mathfrak{g} = \hat{\mathfrak{g}}$ (we may need to apply basic moves on the presentations of $A$ or $E$ to achieve that).

Thus, we have found a suitable $z_{E}$ unless the presentations of $A$ and $E$ are equal or else they differ by the cohomology class. Clearly, the presentations cannot be equal if $Id_{G}(A) \varsubsetneq Id_{G}(E)$. The final step in the proof shows that if the presentations differ only in the cohomology class, then again $Id_{G}(A) \varsubsetneq Id_{G}(E)$ is not possible. Details are omitted.

\end{proof}
Now, for the construction of $m(X)$ we take the central polynomial $f$, defined over $k$, and replace one of the designated $e$-variables by $z_{E}$ (note that we may assume by adding a suitable variable, that the homogeneous degree of $z_{E}$ is $e$). We denote the resulting polynomial by $\hat{f}$. Thus the polynomial $\hat{f}$ is a nonidentity $e$-central polynomial of $A$ which is defined over $k$. Moreover, $\hat{f}$ is an identity of $E$. Next we want to do the same for every $G$-graded simple $E$ with $Id_{G}(A) \varsubsetneq Id_{G}(E)$.

\begin{lem}
There are only a finite number of finite dimensional $G$-graded simple algebras with $Id_{G}(A) \varsubsetneq Id_{G}(E)$ over $F = \bar{F}$.
\end{lem}

\begin{proof}
We have seen that $dim(E_{g}) \leq dim(A_{g})$. This shows the order of the group $\hat{H}$ and the cardinality of the tuple $\hat{\mathfrak{g}}$ are bounded. Finally, because $H$ is finite, the Schur multiplier $M(H)$ is a finite (abelian) group and hence, because the number of roots of unity in $F$ of order dividing $exp(M(H))$ is finite, the group $Hom(M(H), F^{*})$ is finite and the result follows.
\end{proof}

Thus, in order to complete the construction of $m(X)$ we insert in $f$ polynomials $z_{E}$ for every $G$-graded simple algebra $E$ with $Id_{G}(A) \varsubsetneq Id_{G}(E)$. This completes the proof of Proposition \ref{vanishing e-central polynomials on E}.
\end{proof}

Our conclusion at this point is that the algebra $m^{-1}\mathcal{A}$ satisfies precisely the identities of $A$ and there is no nonzero homomorphic image $B$ with $Id_{G}(B) \supseteq Id_{G}(E) \varsupsetneq Id_{G}(A)$ where $E$ is $G$-graded simple. Indeed, the polynomial $m(X)$ would vanish on $E$ and hence on $B$ which is impossible if $m(X)$ is invertible.

We can now complete the proof of part $3$ in Theorem \ref{main theorem}. Suppose $Id_{G}(B) \varsupsetneq Id_{G}(A)$. Because $A$ is finite dimensional the $T$-ideal $Id_{G}(A)$ and hence also $Id_{G}(B)$ contains a nongraded Capelli polynomial and so we may apply Kemer's representability theorem for $G$-graded algebras ($G$-finite). Consequently we obtain a finite dimensional algebra $B_{0}$ over $F$ which is PI equivalent to $B$. Let $B_{0} \cong E_{1} \times \cdots \times E_{q} \oplus J(B_{0})$ be the Wedderburn decomposition of $B_{0}$ as a $G$-graded algebra where the $E_{i}$, $i = 1,\ldots, q$, are $G$-graded simple algebras and $J(B_{0})$ is the radical Jacobson of $B_{0}$. Here, the sum is direct as vector spaces over $F$. But $Id_{G}(E_{i}) \supseteq Id_{G}(B_{0}) \supsetneq Id_{G}(A)$ and so, applying Proposition \ref{vanishing e-central polynomials on E}, the polynomial $m(X)$ vanishes on $E_{i}$, $i=1,\ldots,q$. It follows that $m(X)$ is an identity of the semisimple supplement of $J(B_{0})$ in $B_{0}$ and hence every evaluation of $m(X)$ on $B_{0}$ is contained in $J(B_{0})$. Denoting by $n_{B_{0}}$ the nilpotency index of $J(B_{0})$ we have that $m(X)^{n_{B_{0}}}$ is an identity of $B_{0}$ and by the PI equivalence also of $B$. This contradicts the fact that $m(X)$ is invertible in $\mathcal{U}$ and hence invertible also in its image $B$.

\section{Graded Azumaya algebras}

Let $A$ be a $G$-graded $F$-algebra, where $F$ is a field of characteristic
$0$. Let $R=Z(A)_e$ be the e-center of $A$.
Note that $R$ might not be a field.

The goal of this section is to show the following Theorem:
\begin{thm}
\label{idealCorrespondence}Suppose that $A$ also satisfies:
\begin{enumerate}
\item $A$ is Azumaya $($as an ungraded algebra$)$.
\item $J\left(R\right)=0$.
\end{enumerate}
Then, there is a $1-1$ correspondence between ideals
of $R$ and $G$-graded ideals of $A$ given by
\begin{align*}
A & \vartriangleright I\mapsto I\cap R\vartriangleleft R\\
 & R\vartriangleright\mathfrak{a}\mapsto\mathfrak{a}A\vartriangleleft A.
\end{align*}
Furthermore, condition (2) is satisfied if $A$ satisfies (1), is PI and $J(A)=0$.
\end{thm}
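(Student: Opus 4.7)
My approach is to factor the desired bijection through the classical ungraded Azumaya correspondence. By hypothesis $(1)$, the maps $I\mapsto I\cap Z$ and $\mathfrak{b}\mapsto \mathfrak{b}A$ give an order-preserving bijection between two-sided ideals of $A$ and ideals of $Z=Z(A)$. The first observation I would record is that this restricts to $G$-graded ideals on both sides: for graded $\mathfrak{b}\subseteq Z$ the ideal $\mathfrak{b}A$ is generated by central homogeneous elements and hence graded, and for graded $I\subseteq A$ the intersection $I\cap Z$ is graded as the intersection of two graded subspaces. This reduces the problem to matching graded ideals of $Z$ with ordinary ideals of $R=Z_{e}$.

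For this second matching I would use the maps $\mathfrak{b}\mapsto \mathfrak{b}_{e}$ and $\mathfrak{a}\mapsto \mathfrak{a}Z$. One direction is immediate: since $Z$ is commutative and $\mathfrak{a}\subseteq R=Z_{e}$, the product decomposes as $(\mathfrak{a}Z)_{g}=\mathfrak{a}\cdot Z_{g}$, so $\mathfrak{a}Z\cap R=\mathfrak{a}$. The hard direction is to show $\mathfrak{b}=\mathfrak{b}_{e}Z$ for every graded ideal $\mathfrak{b}$ of $Z$, equivalently $\mathfrak{b}_{e}=0\Rightarrow \mathfrak{b}=0$. My plan is to exploit that $G$ is finite, so for a homogeneous $z\in \mathfrak{b}_{g}$ the power $z^{|g|}$ lies in $Z_{e}\cap\mathfrak{b}=\mathfrak{b}_{e}=0$, forcing $z$ to be nilpotent; it then remains to rule out nonzero homogeneous nilpotents in $Z$. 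For this last step, $J(R)=0$ gives that $R$ itself is reduced since $\mathrm{nil}(R)\subseteq J(R)$, and I expect the Azumaya structure to force each $Z_{g}$ to be a faithful flat $R$-module (in particular $Z$ is integral over $R$ via the monic relations $T^{|g|}-z^{|g|}\in R[T]$), so that $z^{|g|}=0$ in $R$ propagates to $z=0$ in $Z$.

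Composing the two bijections recovers the theorem's maps: a graded $I$ is sent to $(I\cap Z)_{e}=I\cap R$, and an ideal $\mathfrak{a}\subseteq R$ is sent to $\mathfrak{a}Z\cdot A=\mathfrak{a}A$. For the ``furthermore'' statement, I would use the classical Azumaya identity $J(A)=J(Z)A$ so that $J(A)=0$ implies $J(Z)=0$; combined with the integrality of $Z$ over $R$ noted above, lying-over for integral extensions yields $J(R)\subseteq J(Z)\cap R=0$. The main obstacle throughout is the implication $\mathfrak{b}_{e}=0\Rightarrow \mathfrak{b}=0$: it is here that the full Azumaya structure must really be brought to bear (rather than just the ungraded ideal bijection), in order to promote the reducedness of $R$ to the absence of homogeneous nilpotents in $Z$.
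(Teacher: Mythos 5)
Your reduction to the ungraded Azumaya ideal correspondence is a reasonable starting point, but two steps do not go through. First, the intermediate object ``graded ideals of $Z=Z(A)$'' is not available in general: when $G$ is nonabelian the center of a $G$-graded algebra need not be a graded subspace (in $FG$ with its tautological $G$-grading the class sums are central but not homogeneous), so neither ``$I\cap Z$ is graded as an intersection of graded subspaces'' nor the decomposition $(\mathfrak{a}Z)_{g}=\mathfrak{a}Z_{g}$ is legitimate. The paper uses only the much weaker fact that the $e$-component of a central element is again central; combined with the $Z$-module retraction $A\to Z$ coming from separability this gives an $R$-linear retraction $A\to R$ and hence $\mathfrak{a}A\cap R=\mathfrak{a}$, without ever needing (or having) a grading on all of $Z$.

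Second, and more seriously, the step you yourself flag as the main obstacle --- promoting $z^{|g|}=0$ to $z=0$ --- is not closed by the tools you invoke. Faithful flatness and integrality over a reduced base do not exclude nilpotents: $k[x]/(x^{2})$ is free and integral over $k$. The absence of homogeneous central nilpotents is essentially the content of the theorem at this point, and it is exactly where $J(R)=0$ must act; your heuristic never uses that hypothesis in a way that could produce it. The paper's mechanism is entirely different: for each maximal ideal $\mathfrak{m}$ of $R$ it passes to $A/\mathfrak{m}A$, which is Azumaya with $e$-center the field $R/\mathfrak{m}$ and therefore $G$-graded \emph{simple}, and comparing $A/\mathfrak{i}A\to A/I$ modulo all such $\mathfrak{m}$ traps the kernel inside $\bigcap_{\mathfrak{m}}\mathfrak{m}A$, which the vanishing of the Jacobson radical kills. (Your reduction of ``$\mathfrak{b}=\mathfrak{b}_{e}Z$'' to ``$\mathfrak{b}_{e}=0\Rightarrow\mathfrak{b}=0$'' hides the same difficulty: it requires the statement for $Z/\mathfrak{a}Z$, whose $e$-center $R/\mathfrak{a}$ need not have zero Jacobson radical.) The ``furthermore'' part inherits both problems, since your integrality of $Z$ over $R$ again presupposes that $Z$ is graded, whereas the paper derives the equivalence $J(A)=0\Leftrightarrow J(R)=0$ from the graded Kaplansky theorem together with the PI hypothesis and the maximal-ideal correspondence already established.
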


We will assume till the end of this section that $A$ is a $G$-graded
algebra which is also an Azumaya algebra.
\begin{lem} \label{e_center_preservation} If $B$ is another $G$-graded algebra
over $R$ and $\phi: A\to B$ is a graded $R$-epimorphism, then $B$
is Azumaya and
\[
Z_{e}\left(B\right)=\phi\left(Z_{e}\left(A\right)\right).
\]
\end{lem}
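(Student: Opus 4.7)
The plan is to combine two ingredients: the classical quotient theory of ungraded Azumaya algebras, and an elementary observation that in any $G$-graded algebra the $e$-homogeneous component of an ordinary central element is itself central.

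The first step is to deduce that $B$ is Azumaya. Since $A$ is ungraded Azumaya over $Z(A)$, every two-sided ideal of $A$ has the form $\mathfrak{b}A$ for an ideal $\mathfrak{b}$ of $Z(A)$; applying this to $\ker(\phi)$ with $\mathfrak{b} = \ker(\phi) \cap Z(A)$ gives $B \cong A/\mathfrak{b}A$. By the classical Azumaya theory the quotient is Azumaya over $Z(A)/\mathfrak{b}$, and moreover the natural map $Z(A) \to Z(B)$ is surjective, that is $Z(B) = \phi(Z(A))$.

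The next step is to prove the two inclusions defining the $e$-center equality. The inclusion $\phi(Z_e(A)) \subseteq Z_e(B)$ is immediate from $R = Z(A) \cap A_e$ together with the graded nature of $\phi$, which gives $\phi(R) \subseteq \phi(Z(A)) \cap B_e = Z(B) \cap B_e = Z_e(B)$. For the reverse inclusion, given $b \in Z_e(B)$, the previous step produces $z \in Z(A)$ with $\phi(z) = b$; decomposing $z = \sum_{g \in G} z_g$ into homogeneous components and using that $\phi$ is graded and $b$ is $e$-homogeneous, the equality $b = \sum_g \phi(z_g)$ forces $\phi(z_e) = b$ and $\phi(z_g)=0$ for $g \neq e$.

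The only point that requires genuine care, and which I expect to be the main obstacle, is verifying that $z_e$ is itself central in $A$, not just its $e$-component. This must be argued in the possibly non-abelian setting. The plan is to fix an arbitrary homogeneous $a \in A_h$ and project the identity $za = az$ onto the degree-$h$ component of $A$: among the summands $z_g a \in A_{gh}$ exactly the term $g = e$ contributes to $A_h$, and symmetrically for $a z_g \in A_{hg}$, so the projection yields $z_e a = a z_e$. Since this holds for every homogeneous $a$, we conclude $z_e \in Z(A) \cap A_e = R$ with $\phi(z_e) = b$, which completes the reverse inclusion and hence the proof.
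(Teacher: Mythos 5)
Your proof is correct and follows the same route as the paper: first establish that $B$ is Azumaya with $Z(B)=\phi(Z(A))$, then use the gradedness of $\phi$ to pass to $e$-centers. The only differences are that the paper obtains the first step by citing Braun's result on epimorphic images of separable algebras (where you argue directly via the ideal correspondence for Azumaya algebras), and that your careful verification that $z_{e}\in Z(A)$ is precisely the content of a lemma the paper proves separately a bit later (that $R$ is a direct $R$-summand of $Z(A)$) and uses implicitly in the chain $Z_{e}(B)=\phi(Z(A))_{e}=\phi(Z_{e}(A))$.
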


\begin{proof}
Because $A$ is separable and $\phi$ is an epimorphism, by Proposition
1.11 in \cite{Braun}, $B$ is Azumaya and $Z\left(B\right)=\phi\left(Z\left(A\right)\right)$.
As $\phi$ preserves the grading, we deduce:
\[
Z_{e}\left(B\right)=\phi\left(Z\left(A\right)\right)_{e}=\phi\left(Z_{e}\left(A\right)\right).
\]
\end{proof}
\begin{lem} \label{azumaya_over_field}If $R$ is a field, then $A$ is a
finite dimensional $G$-graded simple $R$-algebra.
\end{lem}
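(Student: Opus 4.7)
The proof of Lemma~\ref{azumaya_over_field} will establish two statements: that $A$ is $G$-graded simple, and that $A$ is finite-dimensional over $R$. The natural tool is the classical correspondence, valid because $A$ is Azumaya, between two-sided ideals $I\vartriangleleft A$ and ideals $\mathfrak{a}\vartriangleleft Z(A)$ given by $I\mapsto I\cap Z(A)$ with inverse $\mathfrak{a}\mapsto \mathfrak{a}A$. Combined with Lemma~\ref{e_center_preservation} this will be the engine of the proof.

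For the graded-simplicity statement, I would take a nonzero $G$-graded two-sided ideal $I$ of $A$ and consider the graded surjection $\pi\colon A\twoheadrightarrow A/I$. By Lemma~\ref{e_center_preservation}, the quotient $A/I$ is again Azumaya and its $e$-center equals $\pi(R)\cong R/(R\cap I)$. Since $R$ is a field, $R\cap I$ is either $0$ or $R$; in the second case $1\in I$ forces $I=A$, as desired. The task is therefore to rule out the first case, i.e.\ to show that a nonzero graded ideal must intersect $R$ nontrivially. The plan is to use the Azumaya correspondence to transfer the nontriviality of $I$ to a nonzero ideal $\mathfrak{a}=I\cap Z(A)$ of $Z(A)$, and then to exploit the graded structure together with the identification $R=Z(A)\cap A_e$ in order to produce a nonzero element of $\mathfrak{a}\cap R$.

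For the finite-dimensionality statement, the plan is to reduce to showing that $Z(A)$ is a finite-dimensional $R$-algebra; since $A$ is finitely generated projective over $Z(A)$ (being Azumaya), this will give $\dim_R A<\infty$. To bound $\dim_R Z(A)$, I would use that $A$ has some PI-degree $n$, so every element of $A$ is integral of degree $\le n$ over $Z(A)$; combining this with the graded decomposition $A=\bigoplus_{g\in G}A_g$ and the finiteness of $G$, one argues that $Z(A)$ is integral over $R$ of bounded degree. The reducedness of $Z(A)$, inherited from the semiprimeness of the Azumaya algebra $A$ under the hypothesis that $R$ is a field, then forces $Z(A)$ to be a finite product of finite field extensions of $R$, hence finite-dimensional over $R$.

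The main obstacle is the extraction argument in the graded-simplicity step: producing a nonzero element of $\mathfrak{a}\cap R$ from a nonzero graded ideal $I$, given that $Z(A)$ itself need not be a $G$-graded subring when $G$ is non-abelian, so the homogeneous components of a central element are not central in general. Overcoming this will require a careful analysis of the interaction between the ungraded Azumaya correspondence and the grading, likely through the reduced trace $\mathrm{Trd}\colon A\to Z(A)$ applied to suitable homogeneous decompositions of elements of $\mathfrak{a}$, together with the observation from Lemma~\ref{e_center_preservation} that taking graded quotients preserves both the Azumaya property and the $e$-center $R$.
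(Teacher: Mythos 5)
Your proposal does not contain a proof of the crucial step, and the step you defer is precisely the content of the lemma. For graded simplicity you correctly reduce to showing that a nonzero $G$-graded ideal $I$ must meet $R=Z(A)_e$ nontrivially, but you then declare this ``the main obstacle,'' gesture at the reduced trace and at ``a careful analysis of the interaction between the ungraded Azumaya correspondence and the grading,'' and stop. Since $Z(A)$ need not be a graded subring when $G$ is non-abelian (as you note), and since $I\cap Z(A)$ being nonzero does not obviously yield a nonzero element of $A_e\cap Z(A)$, no actual argument is on the table. The finite-dimensionality half has the same character: the assertion that ``one argues that $Z(A)$ is integral over $R$ of bounded degree'' by combining the PI property with the graded decomposition and the finiteness of $G$ is exactly the hard point and is given no justification ($A$ need not even be finitely generated over $A_e$ for a general $G$-grading). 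Moreover, your appeal to reducedness of $Z(A)$ via semiprimeness of $A$ is circular in the paper's logical order: at this stage $J(A)=0$ is only deduced from $J(R)=0$ via Corollary \ref{cor:maximal_ideals_correspondence}, whose proof uses the present lemma.

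The paper's proof avoids all of this by invoking separability: by \cite[Proposition 1.12]{Braun} the Azumaya algebra $A$ is separable over the field $R$, and by \cite[Theorem 2.5]{Braun} a separable algebra over a field is finite dimensional and semisimple. Semisimplicity forces $G$-semisimplicity, so $A$ is a direct product of $G$-graded simple algebras; since the $e$-center $R$ is a field it admits no nontrivial idempotents, so there is only one factor and $A$ is $G$-graded simple. If you want to salvage your approach, the honest route is to first establish finite dimensionality and semisimplicity (which is what the separability citation delivers) and only then deduce graded simplicity from the block decomposition; trying to run the ideal-extraction argument directly, without finiteness in hand, is where your outline breaks down.
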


\begin{proof}
By \cite[Proposition 1.12]{Braun}, $A$ is separable over $R$. By \cite[Theorem 2.5]{Braun},
$A$ is finite dimensional over $R$ and is semisimple as an ungraded
algebra. As a result, $A$ is $G$-semisimple. Because $R$ is a field,
$A$ must be $G$-graded simple.
\end{proof}
\begin{cor}
\label{cor:maximal_ideals_correspondence}If $M$ is a $G$-graded
maximal ideal of $A$, then $M=\mathfrak{m}A$, where $\mathfrak{m}=M\cap F$.
\end{cor}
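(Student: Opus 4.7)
The plan is to read $\mathfrak{m}=M\cap R$ (which is what the statement must mean in the setup of this section) and to reduce the claim to showing that $A/\mathfrak{m}A$ is $G$-graded simple. Once that holds, the graded ideal $M/\mathfrak{m}A\subsetneq A/\mathfrak{m}A$ must be zero and hence $M=\mathfrak{m}A$. By Lemma~\ref{e_center_preservation} the quotient $A/\mathfrak{m}A$ is Azumaya with $e$-center $R/(\mathfrak{m}A\cap R)$, so by Lemma~\ref{azumaya_over_field} it suffices to establish the contraction identity $\mathfrak{m}A\cap R=\mathfrak{m}$ together with the fact that $R/\mathfrak{m}$ is a field.

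The contraction identity I would get from the ungraded Azumaya correspondence applied to $A$ over $Z(A)$. Since $\mathfrak{m}\subseteq R\subseteq Z(A)$, one has $\mathfrak{m}A\cap Z(A)=\mathfrak{m}\,Z(A)$; and any $r\in R\cap\mathfrak{m}\,Z(A)$ can be written $r=\sum m_iz_i$ with $m_i\in\mathfrak{m}$ and $z_i\in Z(A)$, so projecting onto the $e$-component of the grading on $Z(A)$ gives $r=\sum m_i(z_i)_e\in\mathfrak{m}\cdot R=\mathfrak{m}$.

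To show $R/\mathfrak{m}$ is a field, I would apply Lemma~\ref{e_center_preservation} to the projection $A\to A/M$, giving that $A/M$ is Azumaya with $e$-center $R/\mathfrak{m}$, and that $A/M$ is $G$-graded simple by maximality of $M$. For a nonzero ideal $\mathfrak{b}\lhd R/\mathfrak{m}$, the extension $\mathfrak{b}(A/M)$ is a $G$-graded two-sided ideal (as $\mathfrak{b}$ lies in the $e$-component), so by graded simplicity $\mathfrak{b}(A/M)=A/M$; the ungraded Azumaya correspondence then forces $\mathfrak{b}\,Z(A/M)=Z(A/M)$, and projecting an expression $1=\sum b_iz_i$ to the $e$-component yields $1\in\mathfrak{b}\cdot(R/\mathfrak{m})=\mathfrak{b}$, so $\mathfrak{b}=R/\mathfrak{m}$. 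The main obstacle is this last step: converting graded simplicity of $A/M$ into triviality of ideals in the scalar ring $R/\mathfrak{m}$, which requires combining the ungraded Azumaya correspondence with $e$-component projection; the contraction identity and the final invocation of Lemma~\ref{azumaya_over_field} are then routine.
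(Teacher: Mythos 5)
Your proposal is correct and follows the same skeleton as the paper's proof: show $A/\mathfrak{m}A$ is Azumaya with $e$-center a field (via Lemma~\ref{e_center_preservation}), conclude it is $G$-graded simple by Lemma~\ref{azumaya_over_field}, and deduce $M=\mathfrak{m}A$. You additionally supply justifications the paper leaves implicit (the contraction $\mathfrak{m}A\cap R=\mathfrak{m}$, which in fact follows at once from $\mathfrak{m}\subseteq\mathfrak{m}A\cap R\subseteq M\cap R=\mathfrak{m}$, and the fact that $R/\mathfrak{m}$ is a field), and these added arguments are sound.
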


\begin{proof}
Clearly $M\supseteq\mathfrak{m}A$. By Lemma \ref{e_center_preservation}, the algebra $A/\mathfrak{m}A$ has an $e$-center which is a field and
is Azumaya by \cite[Proposition 1.11]{Braun}. Hence, by Lemma \ref{azumaya_over_field},
it is $G$-graded simple. Furthermore, this algebra projects onto $A/M$
and hence the projection is a $G$-isomorphism. We conclude that $M=\mathfrak{m}A$.
\end{proof}
\begin{rem}
It is clear from the proof that if $\mathfrak{m}$ is a maximal ideal
of $R$, then so does $M=\mathfrak{m}A$.
\end{rem}

We now can conclude the ``furthermore'' part of the Theorem. Indeed, in the case where $A$ is a PI algebra, $J\left(A\right)$
is equal to the $G$-graded Jacobson radical (\cite[Theorem 4.4]{Dicks})
and by the $G$-graded Kaplansky Theorem (\cite[Corollary 2.18]{Karasik}),
it is the intersection of two-sided maximal graded ideals. By the above Corollary, it is equal to $J(R)A$. As a result,  $J(A)=0$ if and only if $J(R)=0$.

\begin{lem}
$R$ is a direct $R$-summand of $Z\left(A\right)$.
\end{lem}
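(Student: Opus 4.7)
The plan is to construct an explicit $R$-linear retraction $\pi\colon Z(A)\to R$ of the inclusion $R\hookrightarrow Z(A)$; this immediately gives the direct summand decomposition $Z(A)=R\oplus\ker(\pi)$ of $R$-modules.

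I would define $\pi$ as the restriction to $Z(A)$ of the canonical $G$-grading projection $A\to A_e$ that sends $z=\sum_{g\in G}z_g$ to its $e$-component $z_e$. The substantive step is to verify that this restriction actually lands in $R=Z(A)\cap A_e$, i.e.\ that $z_e\in Z(A)$ whenever $z\in Z(A)$. To check this, pick any homogeneous $a_h\in A_h$ and expand the identity $za_h=a_hz$ as
\[
\sum_{g\in G}z_g a_h \;=\; \sum_{g\in G}a_h z_g,
\]
noting that $z_g a_h\in A_{gh}$ and $a_h z_g\in A_{hg}$. Comparing the $A_h$-components on either side, the only surviving term on the left comes from the unique $g$ with $gh=h$, namely $g=e$, giving $z_e a_h$; symmetrically, the $A_h$-component on the right is $a_h z_e$. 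Hence $z_e$ commutes with every homogeneous element of $A$, so $z_e\in Z(A)$, and since $z_e\in A_e$ trivially, we get $\pi(z)\in R$.

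It remains to observe that $\pi$ is $R$-linear and restricts to the identity on $R$. $R$-linearity is immediate, since for $r\in R\subseteq A_e$ one has $rz_g\in A_g$, so $(rz)_e=rz_e=r\pi(z)$; the identity restriction is tautological. Thus $\pi$ splits $R\hookrightarrow Z(A)$ in the category of $R$-modules, proving the lemma.

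The only real obstacle is the centrality check for $z_e$, and it rests on the single observation that $e\in G$ is characterized by $eh=he=h$ for all $h\in G$, which is what lets the $e$-homogeneous projection commute with the centrality condition; in particular, neither the Azumaya hypothesis nor $J(R)=0$ is needed at this step, so the conclusion is a purely formal consequence of the $G$-grading. Those hypotheses will only be needed in subsequent steps that use this decomposition to establish the ideal correspondence.
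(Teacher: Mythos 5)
Your proof is correct and is essentially the same as the paper's: both establish that the $e$-component $z_e$ of a central element $z$ is itself central by comparing the $A_h$-graded components of the commutator $[a_h,z]=0$ for homogeneous $a_h$, which forces $[a_h,z_e]=0$. The extra verification that the grading projection is an $R$-linear retraction is the (tacit) formal step the paper leaves to the reader, so there is nothing to add.
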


\begin{proof}
We need to show that if $Z\left(A\right)\ni x=x_{e}+x_{\perp}$, where
$x_{e}\in A_{e}$ and $x_{\perp}\in\oplus_{e\neq g\in G}A_{g}$, then
$x_{e}\in R$: Indeed for any component $A_{h}$ we have $0=\left[A_{h},x\right]=\left[A_{h},x_{e}\right]+\left[A_{h},x_{\perp}\right]$
and clearly $\left[A_{h},x_{e}\right]$ has degree $h$ whereas the
projection of $\left[A_{h},x_{\perp}\right]$ into $A_{h}$ is $0$.
As a result, $\left[A_{h},x_{e}\right]=0$ and so $x_{e}\in Z\left(A\right)$.
\end{proof}
\begin{cor}
\label{cor:from_center_to_algebra}If $\mathfrak{a}$ is an ideal
of $R$, then $\mathfrak{a}A\cap R=\mathfrak{a}$.
\end{cor}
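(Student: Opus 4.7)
The containment $\mathfrak{a}\subseteq \mathfrak{a}A\cap R$ is immediate since $\mathfrak{a}\subseteq R$ and $1\in A$, so the plan is to establish the reverse inclusion $\mathfrak{a}A\cap R\subseteq \mathfrak{a}$. The strategy is a two-step reduction: first descend from $A$ down to its full center $Z(A)$ using the (ungraded) Azumaya hypothesis, then descend from $Z(A)$ down to $R$ using the preceding lemma that $R$ is a direct $R$-summand of $Z(A)$.

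For the first step, recall that a classical property of Azumaya algebras (see, e.g., \cite{Braun}) is that for any ideal $\mathfrak{b}\vartriangleleft Z(A)$ one has $\mathfrak{b}A\cap Z(A)=\mathfrak{b}$. Applying this with $\mathfrak{b}=\mathfrak{a}Z(A)$, and noting that $\mathfrak{a}A=\bigl(\mathfrak{a}Z(A)\bigr)A$ since $R\subseteq Z(A)$, I obtain
\[
\mathfrak{a}A\cap Z(A)=\mathfrak{a}Z(A).
\]
Intersecting both sides with $R\subseteq Z(A)$ gives $\mathfrak{a}A\cap R=\mathfrak{a}Z(A)\cap R$, which reduces the problem to showing $\mathfrak{a}Z(A)\cap R=\mathfrak{a}$.

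For the second step, I invoke the previous lemma. Its proof actually gives a homogeneous decomposition
\[
Z(A)=R\oplus C,\qquad C:=Z(A)\cap\Bigl(\textstyle\bigoplus_{g\neq e}A_{g}\Bigr),
\]
as $R$-modules, since any $x\in Z(A)$ decomposes as $x=x_{e}+x_{\perp}$ with $x_{e}\in R$ and $x_{\perp}\in C$. Multiplying by $\mathfrak{a}\subseteq R$ (which lies in $A_{e}$) respects this grading, so
\[
\mathfrak{a}Z(A)=\mathfrak{a}R+\mathfrak{a}C=\mathfrak{a}\oplus\mathfrak{a}C,
\]
where the summands lie respectively in $A_{e}$ and in $\bigoplus_{g\neq e}A_{g}$. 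Intersecting with $R\subseteq A_{e}$ kills the second summand and yields $\mathfrak{a}Z(A)\cap R=\mathfrak{a}$, completing the proof.

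The only potentially subtle point is verifying that the Azumaya ideal-correspondence can be invoked with $\mathfrak{b}=\mathfrak{a}Z(A)$ and that $\mathfrak{a}A=(\mathfrak{a}Z(A))A$; both are routine once one notes $\mathfrak{a}\subseteq R\subseteq Z(A)$ and that $Z(A)\cdot A=A$. The main conceptual content is the grading-driven splitting $Z(A)=R\oplus C$ inherited from the preceding lemma, which is what converts the classical Azumaya correspondence into its $e$-center refinement.
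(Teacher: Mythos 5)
Your proof is correct. The paper's own argument is a one-line module computation: it asserts that $R$ is an $R$-module direct summand of $A$, writes $A=R\oplus L$, and concludes $\mathfrak{a}A\cap R=(\mathfrak{a}\oplus\mathfrak{a}L)\cap R=\mathfrak{a}$. Note, however, that the preceding lemma only establishes that $R$ is a direct summand of $Z(A)$; passing from there to ``$R$ is a direct summand of $A$'' silently uses the standard separability fact that $Z(A)$ is a $Z(A)$-module direct summand of $A$. Your two-step factorization through $Z(A)$ --- first the classical Azumaya ideal correspondence $\mathfrak{b}A\cap Z(A)=\mathfrak{b}$ to descend from $A$ to $Z(A)$, then the graded splitting $Z(A)=R\oplus C$ to descend from $Z(A)$ to $R$ --- reaches the same conclusion while making that hidden ingredient explicit, at the modest cost of invoking the ideal correspondence (itself a consequence of the same separability projection). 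So the two arguments rest on the same pillars (the Azumaya hypothesis plus the grading lemma), but yours is organized so that each descent step is individually justified, which is arguably a cleaner account than the paper's.
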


\begin{proof}
Notice that we got that $R$ is an $R$-direct summand of $A$. So
we may write $A=R\oplus L$. As a result,
\[
\mathfrak{a}A\cap R=\left(\mathfrak{a}\left(R\oplus L\right)\right)\cap R=\left(\mathfrak{a}\oplus\mathfrak{a}L\right)\cap R=\mathfrak{a}.
\]
\end{proof}

We are ready to prove Theorem \ref{idealCorrespondence}.

\begin{proof}[Proof of Theorem \ref{idealCorrespondence}]
Let $I$ be a $G$-graded ideal of $A$ and let $\mathfrak{i}=I\cap R$
be the corresponding ideal of $R$. We consider the graded projection:
\[
p:A/\mathfrak{i}A\to A/I.
\]
 Notice that $p$ is injective when restricted to the $e$-center,
because by Lemma \ref{e_center_preservation}
\[
Z\left(A/\mathfrak{i}A\right)_{e}=R/\mathfrak{i}=Z\left(A/I\right)_{e}.
\]
By the same Lemma, both algebras are Azumaya. We want to show that
$p$ is injective; from which it follows that $I=\left(I\cap R\right)A$.
We are in the following situation: We have a surjection between $p:B\to B^{\prime}$
between two $G$-graded Azumaya algebras, which, when restricted to
the $e$-center, is an isomorphism. Denote by $R_{0}$ the common $e$-center.
By Corollary \corref{maximal_ideals_correspondence} the maximal $G$-graded
ideals of $B$ and $B^{\prime}$ are respectively $\mathfrak{m}B$
and $\mathfrak{m}B^{\prime}$, where $\mathfrak{m}$ is a maximal
ideal of $R_{0}$. Hence, by dividing by $\mathfrak{m}B$ and $\mathfrak{m}B^{\prime}$,
we obtain the graded projection
\[
p_{\mathfrak{m}}:\left(A/\mathfrak{i}A\right)/\mathfrak{m}B\to\left(A/I\right)/\mathfrak{m}B'.
\]
By Lemma \ref{azumaya_over_field} we conclude that $p_{\mathfrak{m}}$
is an isomorphism. As a result, the kernel of $p$ must be contained
in $\bigcap_{\mathfrak{m\text{ is maximal in \ensuremath{R}}}}\mathfrak{m}A$. Since we also assume that $J(R)=0$, we conclude that this intersection is zero.

The other direction, which is, $\mathfrak{i}A\cap R=\mathfrak{i}$
is Corollary \corref{from_center_to_algebra}.
\end{proof}

\section{Generic $G$-graded Azumaya algebras}

Let $A$ be a $G$-graded simple finite dimensional algebra over an algebraically
closed field $F$ of characteristic $0$. Denote by $\mathcal{A}$
its $G$-graded relatively free algebra.

In this section we show that there is an $e$-central localization
$s^{-1}\mathcal{A}$ of $\mathcal{A}$ satisfying all the assumptions
(hence the conclusion) of Theorem \ref{idealCorrespondence}.

Let us state some facts regarding $\mathcal{A}$ which were proved
in \cite{Karasik}:
\begin{prop}
(\cite[Lemmta 5.3,5.4,5.7]{Karasik})The algebra $\mathcal{A}$ has
$J\left(A\right)=0$ and $Z\left(\mathcal{A}\right)_{e}$ has no zero-divisors
of $\mathcal{A}$. Furthermore, if we denote $S=Z\left(\mathcal{A}\right)_{e}\setminus\left\{ 0\right\} $,
then $S^{-1}\mathcal{A}$ is finite dimensional $G$-graded simple algebra
over its $e$-center $R$.
\end{prop}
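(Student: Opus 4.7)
The plan is to realize $\mathcal{A}$ inside a concrete algebra and transport all three assertions through the resulting injection. Fix once and for all a homogeneous $F$-basis $\{a_{g,\ell}\}_\ell$ of each component $A_g$, and introduce commuting indeterminates $\xi_{g,i,\ell}$ indexed by the pairs $(g,i)$ labelling the free variables $x_{g,i}$ of $k\langle X_G\rangle$ together with the basis index $\ell$. The \emph{generic $G$-graded evaluation}
\[
\Phi \colon k\langle X_G\rangle \longrightarrow A\otimes_F F[\Xi],\qquad x_{g,i}\longmapsto \sum_{\ell} a_{g,\ell}\otimes \xi_{g,i,\ell},
\]
is a graded $k$-algebra homomorphism whose kernel is exactly the $T$-ideal of $G$-graded identities of $A$, since a multilinear polynomial vanishes on $A$ if and only if it vanishes under this generic substitution. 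Hence $\Phi$ descends to an injective graded homomorphism $\phi\colon \mathcal{A}\hookrightarrow A\otimes_F F[\Xi]$.

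For the assertion $J(\mathcal{A})=0$, note that $A$ is finite dimensional semisimple over $F$ (characteristic zero, $G$-graded simple), so by Amitsur's theorem on Jacobson radicals of polynomial extensions one has $J(A\otimes_F F[\Xi])=J(A[\Xi])=0$; injectivity of $\phi$ then forces $J(\mathcal{A})=0$. For the assertion that $Z(\mathcal{A})_e$ contains no zero divisors of $\mathcal{A}$, observe that $\phi$ carries $Z(\mathcal{A})_e$ into the $e$-component of the center of $A\otimes_F F[\Xi]$, which equals $Z(A)_e\otimes_F F[\Xi]=F[\Xi]$ because $Z(A)_e=F$ by hypothesis. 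In particular $Z(\mathcal{A})_e$ is an integral domain. Moreover $A\otimes_F F[\Xi]$ is free of finite rank over $F[\Xi]$, so multiplication by any nonzero $p\in F[\Xi]$ is injective on it, and nonzero elements of $Z(\mathcal{A})_e$ are regular in $\mathcal{A}$.

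For the last assertion, let $R$ be the field of fractions of $Z(\mathcal{A})_e$, so that $R=S^{-1}Z(\mathcal{A})_e$. Via $\phi$ and the inclusion $F[\Xi]\subset F(\Xi)$ the localization $S^{-1}\mathcal{A}$ sits inside $A\otimes_F F(\Xi)$, which is a $G$-graded simple algebra with $e$-center $F(\Xi)$. The main obstacle is showing that $S^{-1}\mathcal{A}$ is itself finite dimensional over $R$ and $G$-graded simple, since a priori $R$ is strictly smaller than $F(\Xi)$. For this I would invoke the $e$-central multialternating polynomial $f$ produced just after Proposition \ref{vanishing e-central polynomials on E}: its nonzero generic values lie in $Z(\mathcal{A})_e\setminus\{0\}$, and the alternation on full homogeneous sets of size $d_g=\dim_F A_g$ yields, by a graded adaptation of the classical Razmyslov--Rowen central-polynomial argument, an explicit finite $R$-spanning set of each homogeneous component of $S^{-1}\mathcal{A}$, so that $\dim_R S^{-1}\mathcal{A}\le \dim_F A$. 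Extending scalars to $F(\Xi)$, one obtains a graded inclusion $S^{-1}\mathcal{A}\otimes_R F(\Xi)\hookrightarrow A\otimes_F F(\Xi)$ of graded algebras of the same finite $F(\Xi)$-dimension, hence an isomorphism; $G$-graded simplicity of $S^{-1}\mathcal{A}$ then descends from that of $A\otimes_F F(\Xi)$ by faithfully flat descent along $R\to F(\Xi)$.
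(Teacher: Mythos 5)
First, for calibration: the paper does not prove this proposition at all --- it is quoted verbatim from \cite{Karasik} (Lemmata 5.3, 5.4, 5.7) --- so your reconstruction can only be judged on its own terms. The generic embedding $\phi\colon\mathcal{A}\hookrightarrow A\otimes_F F[\Xi]$ and the regularity argument for $Z(\mathcal{A})_e$ are correct and standard (the inclusion $\phi\left(Z(\mathcal{A})_e\right)\subseteq 1\otimes F[\Xi]$ does need a word: let a central element commute with a generic element in fresh variables and compare coefficients of the $\xi$'s to see that its values centralize all of $A$). The outline of the last assertion is also the right one, but note that you have deferred its entire mathematical content --- producing a finite $R$-spanning set of $S^{-1}\mathcal{A}$ from the multialternating $e$-central polynomial --- to an unexecuted ``graded adaptation of Razmyslov--Rowen''; that adaptation \emph{is} Karasik's Lemma 5.7 (the $G$-graded Posner/Rowen theorem), so as written this part is a plan rather than a proof.

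The genuine gap is in the step $J(\mathcal{A})=0$. From $J\left(A\otimes_F F[\Xi]\right)=0$ and the injectivity of $\phi$ one cannot conclude $J(\mathcal{A})=0$: semiprimitivity is not inherited by subrings. For instance $B=F+x\,T_2(F[x])$ sits inside the semiprimitive ring $M_2(F[x])$, is $\mathbb{N}$-graded with degree-zero part $F$, and yet contains the square-zero ideal $xe_{12}F[x]\subseteq J(B)$; and indeed if $A$ were nilpotent the relatively free algebra would have a large radical even though it still embeds in $A\otimes_F F[\Xi]$. The semisimplicity of $A$ must therefore enter through \emph{surjections}, not through the injection. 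The standard repair: let $A_0$ be the $k$-form of $A$ constructed in Section 2, so $A_0$ is finite dimensional $G$-graded simple over $k$, whence $J(A_0)=0$ ($G$ finite, characteristic zero), and $Id_{G,k}(A_0)=\Gamma$ since $k$ is infinite. Given $0\neq\bar f\in\mathcal{A}$, choose a graded substitution into $A_0$ that is nonzero on $\bar f$ and that, on variables not occurring in $\bar f$, runs over a homogeneous $k$-spanning set of $A_0$ (possible because each $X_g$ is infinite). This gives a surjective graded $k$-algebra map $\psi\colon\mathcal{A}\twoheadrightarrow A_0$ with $\psi(\bar f)\neq 0$, and surjectivity yields $\psi\left(J(\mathcal{A})\right)\subseteq J(A_0)=0$, so $\bar f\notin J(\mathcal{A})$. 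With this substitution for your second paragraph the first two assertions are proved; the third still rests on the graded Posner theorem.
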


By the above proposition and \cite[Theorem 2.5]{Braun} we conclude that
$S^{-1}\mathcal{A}$ is separable over $R$. By \cite[Theorem 3.8]{Braun}
$S^{-1}\mathcal{A}$ is Azumaya. Using Braun's characterization of
Azumaya algebras (see \cite{Braun,Dicks}), one can find $s\in S$ and
$a_{1},\dots,a_{m};b_{1},\dots,b_{m}\in\mathcal{A}$ such that
\[
\frac{1}{s}\sum_{i=1}^{m}a_{i}b_{i}=1
\]
and for every $a\in\mathcal{A}$,
\[
\sum_{i=1}^{m}a_{i}ab_{i}\in Z\left(\mathcal{A}\right).
\]
 As a result, $\mathcal{U}:=s^{-1}\mathcal{A}$ is an Azumaya algebra
such that $J(R)=0$.

We can now complete the proof of Theorem \ref{main theorem}. To this end we modify the $G$-graded generic algebra constructed above so that it satisfies in addition the Artin-Procesi condition.
\begin{prop}\label{Azumaya and and Artin Procesi together}
Let $A$ be a finite dimensional $G$-graded simple algebra over an algebraically closed field $F$ of characteristic zero. Let $k$ be its unique minimal field of definition. Let $k\langle X_{G} \rangle/\Gamma$ be the relatively free $G$-graded algebra of $A$. Then there exists an $e$-central polynomial $t(X)$ such that the localized algebra $\mathcal{U} = t(X)^{-1} (k\langle X_{G} \rangle/\Gamma)$ is Azumaya in the sense of Artin-Procesi and satisfies the condition on graded ideals.
\end{prop}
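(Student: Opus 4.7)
The plan is to take $t(X) = m(X)\cdot s$, where $m(X)$ is the $e$-central polynomial produced by Proposition \ref{vanishing e-central polynomials on E} (defined over $k$ and vanishing on every $G$-graded simple $E$ with $Id_G(E)\supsetneq Id_G(A)$), and $s\in Z(\mathcal{A})_e\setminus\{0\}$ is the Braun element produced in Section~4 which witnesses that $s^{-1}\mathcal{A}$ is Azumaya. Both elements are $e$-central and nonzero, and by the results of \cite{Karasik} recalled at the start of Sections~3 and~4, the nonzero elements of $Z(\mathcal{A})_e$ are non-zero-divisors in $\mathcal{A}$. So $t(X)$ is an $e$-central non-zero-divisor and
\[
\mathcal{U}\;=\;t(X)^{-1}\bigl(k\langle X_G\rangle/\Gamma\bigr)
\]
simultaneously inverts $m(X)$ and $s$.

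First I would verify that $\mathcal{U}$ meets the hypotheses of Theorem \ref{idealCorrespondence}. It is Azumaya as an ungraded algebra because $s^{-1}\mathcal{A}$ is already Azumaya by Braun's criterion, and Azumaya-ness is preserved under further central localization: concretely, the Braun elements $a_i,b_i$ for $s^{-1}\mathcal{A}$ remain Braun elements for $\mathcal{U}$. Moreover $J(\mathcal{U})=0$: by the proposition recalled at the top of Section~5 we have $J(\mathcal{A})=0$, and the Jacobson radical is preserved under localization at central non-zero-divisors. Then the ``furthermore'' half of Theorem \ref{idealCorrespondence} yields $J(R)=0$ for $R=Z(\mathcal{U})_e$, and the theorem itself delivers the bijection between $G$-graded two-sided ideals of $\mathcal{U}$ and ideals of $R$.

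Second, I would verify the Artin--Procesi condition. The inclusion $Id_{G,F}(A)\subseteq Id_{G,F}(\mathcal{U})$ is immediate, since $\mathcal{U}$ is a localization of the relatively free algebra. For the converse, suppose $B$ is a nonzero $G$-graded homomorphic image of $\mathcal{U}$ with $Id_G(B)\supsetneq Id_G(A)$. Since $A$ is finite dimensional, $Id_G(A)$ (hence $Id_G(B)$) contains a non-graded Capelli identity, so by the $G$-graded representability theorem \cite{AB} there exists a finite-dimensional $G$-graded algebra $B_0$ with $Id_G(B_0)=Id_G(B)$ and Wedderburn decomposition $B_0\cong E_1\times\cdots\times E_q\oplus J(B_0)$, the $E_i$ being $G$-graded simple over $F$. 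Each $Id_G(E_i)\supseteq Id_G(B_0)\supsetneq Id_G(A)$, so by Proposition \ref{vanishing e-central polynomials on E} the polynomial $m(X)$ vanishes on every $E_i$. Hence every evaluation of $m(X)$ on $B_0$ lies in $J(B_0)$, so a suitable power $m(X)^{n_{B_0}}$ is an identity of $B_0$, of $B$, and therefore of the image of $\mathcal{U}$ — contradicting the invertibility of $m(X)$ in $\mathcal{U}$.

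The main obstacle, such as it is, is essentially bookkeeping: one must confirm that inverting $m$ in addition to $s$ preserves both the Azumaya property and the vanishing of the Jacobson radical. Both facts boil down to $t=ms$ being an $e$-central non-zero-divisor, exactly what the lemmas of \cite{Karasik} quoted in Sections~3 and~4 supply. With that in hand, the two separate constructions — the Artin--Procesi localization of Section~3 and the Azumaya/ideal-correspondence localization of Section~4 — combine without interference to give the polynomial $t(X)$ and the algebra $\mathcal{U}$ required by the proposition.
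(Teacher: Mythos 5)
Your proposal is correct and follows the paper's own route exactly: the paper also sets $t(X)=s\cdot m(X)$, observes that inverting $t$ inverts both the Braun element $s$ (giving the Azumaya property, hence via Theorem \ref{idealCorrespondence} the ideal correspondence) and the polynomial $m(X)$ of Proposition \ref{vanishing e-central polynomials on E} (giving the Artin--Procesi condition by the Kemer representability and nilpotency argument at the end of Section 3, which you reproduce). The only difference is that you spell out details the paper leaves as ``it is clear,'' and your level of justification for $J(R)=0$ after localization matches the paper's own.
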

\begin{proof}
Let $t(X) = s\cdot m(X)$. It is clear that the algebra $t(X)^{-1} (k\langle X_{G} \rangle/\Gamma)$ satisfies the conditions of the above paragraph and so it is $G$-graded generic Azumaya. As for the Artin-Procesi property, it is easily seen that if $m(X)$ vanishes on $G$-graded simple algebras $E$ with $Id_{G}(E) \varsupsetneq Id_{G}(A)$, so does $t(X)$ and the result follows.
\end{proof}

We close this section by proving the algebra~ $\mathcal{U} = \mathcal{U}_{A}$ satisfies all conditions stated in Theorem \ref{main theorem}.
\begin{proof}
We need to prove $(5)$ (indeed ($1$) follows from (\cite[Lemmta 5.3,5.4,5.7]{Karasik}), ($2$) follows from the fact that $\mathcal{U}$ is an $e$-central localization of the relatively free algebra $\mathcal{A} = k\langle X_{G} \rangle /\Gamma$ of $A$ and finally ($3$) and ($4$) were taken care in Proposition \ref{Azumaya and and Artin Procesi together}).

Let $\eta: \mathcal{U} \rightarrow B \neq 0$ be a $G$-graded map of $k$ algebras and onto. Let $S$ be the $e$-center of $B$ and suppose it is an integral domain. If $J = ker(\eta)$, we know by Theorem \ref{main theorem} that it is induced by an $e$-central ideal of $R$, the $e$-center of $\mathcal{U}$. It follows that $R/I \cong S$ and $B \cong \mathcal{U}/I\mathcal{U}$ as $G$-graded algebras. Extending scalars to $L = frac (S)$ we obtain a finite dimensional algebra $A_{0}$, $G$-graded simple over its $e$-center $L$. By the Artin-Procesi property of $\mathcal{U}$ we have that $B$ is PI equivalent to $A$ and hence $A_{0}$ is a $G$-graded form of $A$.

Conversely: Let $A_{0}$ be a finite dimensional algebra, $G$-graded simple over a field $L$, the $e$-center of $A_{0}$. Suppose $A_{0}$ is a $G$-graded $L$-form of $A$. Our goal is to exhibit a homomorphic image of $\mathcal{U}$ which yields $A_{0}$ by scalar extension (note that we cannot expect better than that since $\mathcal{U}$ is countable whereas $A_{0}$ is not in general).
Because the polynomial $t(X)$ is $e$-central and nonzero on $A$, it is $e$-central and nonzero on $A_{0}$ and hence there is an evaluation of $t(X)$ on $A_{0}$ with nonzero, and hence invertible value in $L$. This fixes the value of the variables that appear in $t(X)$. We assume as we may that $k\langle X_{G} \rangle$ has additional variables $Y_{G}$, at least as the dimension of $A$ over its $e$-center $F$ (with the corresponding homogeneous degrees). Fix a basis $\beta_{G}$ of $A_{0}$ over $L$ consisting of homogeneous elements and choose an evaluation of the variables $Y_{G}$ onto $\beta_{G}$.  We denote the map obtained from the evaluation by $\phi$ and let $J = ker(\phi)$. Because $J$ is a $G$-graded ideal of $\mathcal{U}$, it corresponds to an ideal $I$ of $R$. The $e$-center of $image(\phi)$ is contained in $L$, and hence is a domain. This shows $I$ is a prime ideal of $R$ and extension of scalars of $image(\phi)$ from $R/I$ to $L$ shows $image(\phi)$ is an $R/I$-form of $A_{0}$.
\end{proof}

\section{$G$-graded verbally prime $T$-ideals and $G$-graded division algebras}

This section is devoted to $G$-graded division algebras and their $T$-ideals of graded identities. The main theorems are Theorem \ref{essentially verbally prime} and Theorem \ref{G = S}.

We start by recalling the characterization of $G$-graded strongly verbally prime $T$-ideals in terms of Bahturin, Sehgal and Zaicev presentation of $G$-graded simple algebras (see Definition \ref{definition verbally and strongly verbally} above and \cite{AljKarasik} for more details).
\begin{thm} \label{stronglyverballyprime}
A $G$-graded $T$-ideal $\Gamma$ over an algebraically closed field $F$ of characteristic zero with $c_{n} \in \Gamma$ the $n$th Capelli polynomial for some $n$, is strongly verbally prime if and only if it is the $T$-ideal of $G$-graded identities of a finite dimensional $G$-graded simple algebra $A$ with a presentation $P_{A} = \{H, \alpha, \mathfrak{g} = (g_{1},\ldots, g_{n})\}$ of the following form $($we assume as we may the $G$-grading is connected, that is, the elements $g\in G$ for which $A_{g} \neq 0$ generate $G$$)$:
\begin{enumerate}
\item
The group $H$ is normal in $G$.
\item
All coset representatives of $H$ in $G$ appear in $\mathfrak{g}$ with equal frequency.
\item
The class $\alpha \in H^{2}(H, F^{*})$ is $G$ invariant where the action of $G$ on $F^{*}$ is trivial.
\end{enumerate}
\end{thm}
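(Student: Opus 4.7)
The plan is to reduce to the verbally prime case first, and then to characterize which finite-dimensional $G$-graded simple algebras $A$ give rise to \emph{strongly} verbally prime $T$-ideals. A strongly verbally prime $T$-ideal is automatically verbally prime (apply the polynomial-level condition to multilinear generators of $T$-ideals), so invoking Theorem 1.6 of \cite{AljKarasik}, recalled in the paragraph just above Definition \ref{definition verbally and strongly verbally}, together with the assumption that $\Gamma$ contains a Capelli polynomial, we obtain $\Gamma = Id_{G}(A)$ for some finite-dimensional $G$-graded simple $A$ over $F$. Fix a Bahturin-Sehgal-Zaicev presentation $P_{A} = (H, \alpha, \mathfrak{g})$. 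The theorem reduces to the claim that $Id_G(A)$ is strongly verbally prime if and only if some presentation of $A$ satisfies $(1)$--$(3)$.

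For the \emph{if} direction I would assume $P_A$ satisfies $(1)$--$(3)$. Under these hypotheses, the support of the grading is all of $G$, every $g \in G$ appears with a common multiplicity $d$ in $\mathfrak{g}$, and a type III move by any $g \in G$ preserves both $H$ and $[\alpha]$. This lets one write $A \cong M_d(F) \otimes_F B$ where $B$ is a $G$-graded algebra in which every nonzero homogeneous component contains an invertible element (a ``weak graded division'' property deduced from the $G$-invariance of $\alpha$ plus the normality of $H$). Given homogeneous multilinear $p,q$ on disjoint variables that are not identities of $A$, I would fix nonvanishing evaluations $\hat p, \hat q$ and use the invertible homogeneous elements of $B$, together with the matrix units $e_{ij}$ from the $M_d(F)$ factor, to build an explicit bridge monomial of the appropriate homogeneous degree so that the concatenated evaluation of $pq$ is nonzero. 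This yields $pq \notin \Gamma$, as required.

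For the \emph{only if} direction I would argue by contrapositive: assuming that no presentation of $A$ satisfies all of $(1)$--$(3)$, construct homogeneous multilinear $p,q$ on disjoint variable sets with $pq \in \Gamma$ and $p,q \notin \Gamma$. The three failure modes are handled in the spirit of Section \ref{sec:2} (the field of definition section). Failures of $(1)$ or $(2)$ (equivalently, the maximal subgroup $K$ of Corollary \ref{Definition of the subgroup K} is a proper subgroup of $G$) are exploited by placing designated $e$-variables in blocks whose representatives lie outside $K$ and alternating, so that any nonvanishing permutation would force an incompatible homogeneous bridge of the type ruled out by Lemma \ref{bridges of singletons and nonsingletons}. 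A failure of $(3)$ (the cocycle $\alpha$ is not $G$-invariant for the trivial action) is exploited by inserting binomial-Regev polynomials $\beta_{\rho,d}$ as in Step 3 of Section \ref{sec:2}, which detect the differences between $[\alpha]$ and its $G$-conjugates via Proposition \ref{binomial identities and Schur multiplier} and Lemma \ref{different actions on binomials}. The alternation and these insertions force any evaluation of the full monomial to vanish while keeping the individual halves nonidentities.

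The main obstacle is the only if direction, specifically arranging in each failure mode that $p$ and $q$ can be placed on \emph{disjoint} variable sets while still forcing $pq$ to be an identity of $A$. This is precisely the distinction between strongly verbally prime and verbally prime: in the $T$-ideal-level condition one is allowed to take $T$-ideal closures and reuse variables, whereas here each factor must be a single multilinear polynomial on its own variable set. Ensuring the disjointness without losing the forced vanishing of $pq$ is what the polynomial-level construction of \cite{AljKarasik} is designed to supply, and it is the technical heart of the characterization.
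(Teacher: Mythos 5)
First, note that the paper does not prove this theorem itself: it is quoted from \cite{AljKarasik}, and the present article only proves the parallel statements (Theorems \ref{G = S} and \ref{essentially verbally prime}) in Section 6. Your overall architecture --- reduce to the verbally prime characterization to get $\Gamma = Id_G(A)$, then prove the equivalence with conditions $(1)$--$(3)$ on a presentation, handling the ``only if'' direction by constructing explicit zero divisors for each failure mode --- matches the source. The ``only if'' sketch is in the right spirit (it is essentially what the paper itself invokes in the step $3\Rightarrow 4$ of Section 6), and your closing remark correctly identifies where the real work lies.

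There is, however, a concrete gap in your ``if'' direction. The product $pq$ is a literal concatenation of two multilinear homogeneous polynomials; there is no slot in which to insert a ``bridge monomial'' between the value of $p$ and the value of $q$. So the problem is not to \emph{connect} two fixed nonzero values $\hat p$ and $\hat q$, but to show that among all nonzero values of $p$ and all nonzero values of $q$ there exist a pair whose product is nonzero --- and the fact that every homogeneous component of $B$ contains \emph{some} invertible element says nothing about which elements arise as values of $p$ or $q$. (For instance, if every nonzero value of $p$ lay in $e_{1,1}Ae_{1,1}$ and every nonzero value of $q$ in $e_{2,2}Ae_{2,2}$, then $pq$ would be an identity even though both components are full of invertibles.) What is actually needed, and what conditions $(1)$--$(3)$ are designed to deliver, is the \emph{path property}: a homogeneous nonidentity attains nonzero values in \emph{every} diagonal block, because vanishing along one path transfers to all paths precisely when $\alpha$ is $G$-invariant (this is where condition $(3)$ enters; compare Proposition \ref{evaluations on the algebra} and the lemma preceding it, where the same transfer is carried out with $g_i(\alpha_\sigma)=\alpha_\sigma$). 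One then concludes either by the Zariski-density argument $\mathfrak{A}_{p,i}\nsubseteq\mathfrak{B}_{q,i}$ used in Section 6, or equivalently by passing to the generic $G$-graded division algebra form, on which all nonzero values are invertible. Without one of these two mechanisms your ``if'' direction does not close.
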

As was mentioned in the introduction these are precisely the $G$-graded simple algebras $A$ which admit a $G$-graded division algebra for over a field $k$ which contains an algebraically closed field $F$.

In the next theorem we characterise $G$-graded simple algebras $A$ which admit a $G$-graded division algebra in terms of their graded presentation $P_{A}$. The terminology is as in Theorem \ref{stronglyverballyprime} (in particular we assume the $G$-grading on $A$ is connected).

\begin{thm} \label{G = S}
Let $A$ be a finite dimensional $G$-graded simple algebra over $F$. Then it has a $G$-graded division algebra form over its $e$-center if and only if $G = S$ $($that is $H$ is normal in $G$, $H$-cosets are equally represented in $\mathfrak{g}$ and $G$ normalizes $B_{\alpha}$$)$.
\end{thm}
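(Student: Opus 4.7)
The plan is to prove Theorem~\ref{G = S} by reducing, through Theorem~\ref{essentially verbally prime}, to the combinatorial equivalence: $\Gamma = \operatorname{Id}_G(A)$ is essentially verbally prime if and only if $G = S$. This parallels Theorem~\ref{stronglyverballyprime}, which characterises strongly verbally prime $T$-ideals by normality of $H$, equal frequency and $G$-invariance of $\alpha$; the weaker essentially verbally prime condition relaxes $G$-invariance of $\alpha$ to $G$-invariance of $B_\alpha$ only, which is exactly the content of $G = S$ when combined with normality and equal frequency.

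For the direction $G = S \Rightarrow A$ admits a graded division algebra form, I would work with the generic algebra $\mathcal{U}$ constructed in Theorem~\ref{main theorem} and its fraction-field localisation $\mathcal{U}_K = \mathcal{U} \otimes_R K$, where $K = \operatorname{Frac}(R)$. By Theorem~\ref{main theorem}(5), $\mathcal{U}_K$ is a $G$-graded form of $A$ over $K$ and is $G$-graded Azumaya. The central claim to establish is that under $G = S$ the graded Wedderburn decomposition $\mathcal{U}_K \cong D \otimes_K M_r(K)$ has $r = 1$, so that $\mathcal{U}_K = D$ itself is a graded division algebra; I would verify this by using the explicit structure following equation~(\ref{A_S}) together with normality of $H$, equal frequency of cosets, and Galois descent of $\alpha$ controlled by $G$-invariance of $B_\alpha$, which together absorb the elementary matrix factor into the twisted part. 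Once $\mathcal{U}_K$ is known to be a graded division algebra, essentially verbally prime is immediate: if $pq \in \Gamma$ with $p, q$ multilinear on disjoint variables having $k$-coefficients, any nonzero evaluation of $p$ on $\mathcal{U}_K$ is a homogeneous invertible element, forcing every evaluation of $q$ to vanish, whence $q \in \Gamma$.

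For the converse direction I would argue the contrapositive via Theorem~\ref{essentially verbally prime}. Assume $G \neq S$, so that one of the three conditions fails, and build multilinear polynomials $p, q$ on disjoint sets of variables with $k$-coefficients such that $pq \in \Gamma$ but $p, q \notin \Gamma$. The constructions adapt the identity constructions of Section~2: if $H$ is not normal, exploit the bridge asymmetry between singleton and non-singleton blocks of Lemma~\ref{bridges of singletons and nonsingletons}; if coset frequencies are unequal, alternate designated $e$-variables over blocks of mismatched sizes as in the construction of $p_1$; if $G \not\subseteq N$, pick $g \in G \setminus N$ so that $g(\zeta_n) \neq s(\zeta_n)$ for every $s \in S$, and split the $k$-polynomial $m(X) = \prod_{s \in S} \beta_\tau(\cdots; s(\zeta_n))$ of Lemma~\ref{different actions on binomials} into two $k$-coefficient factors whose product is in $\Gamma$ while each factor alone is a nonidentity of $A$.

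The main obstacle is the claim in Direction~1 that $\mathcal{U}_K$ is a graded division algebra when $G = S$. Establishing this requires pinpointing how each of the three combinatorial conditions makes the elementary matrix factor of the graded Wedderburn decomposition of $\mathcal{U}_K$ trivial, which amounts to showing that under $G = S$ the Galois action of $G/C$ on $\mathbb{Q}(\mu_n)$ together with the cocycle $\alpha$ descend the generic form down to $K$ as a graded division algebra; this step intertwines the twisted group algebra structure, the elementary grading determined by $\mathfrak{g}$, and the Galois symmetry, and is where the delicate interaction between the three defining conditions of $S$ plays its role.
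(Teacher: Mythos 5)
Your overall architecture matches the paper's: both prove Theorems \ref{G = S} and \ref{essentially verbally prime} simultaneously via a cycle of equivalences ($G$-graded division algebra form $\Rightarrow$ essentially verbally prime $\Rightarrow$ $\mathbb{Q}$-strongly verbally prime $\Rightarrow$ $G=S$ $\Rightarrow$ form), and your sketches of the easy implications are essentially the paper's arguments (invertibility of nonzero homogeneous values in a graded division form for $1\Rightarrow 2$; the \cite{AljKarasik}-style zero-divisor constructions for the contrapositive of $3\Rightarrow 4$). One caveat on the latter: when $G$ does not normalize $B_{\alpha}$, simply splitting $m(X)=\prod_{s\in S}\beta_{\tau}(\cdots;s(\zeta_n))$ into two factors produces zero divisors modulo the identities of the twisted group algebra $F^{\alpha}H$, not of $A$; the paper must first embed the binomials into $A$ via the Regev central polynomials $R_{d,h_i}$ and insert them into a Capelli-type alternating polynomial ordered so that blocks with $g_i(z)\in B_{\alpha}$ are separated from those with $g_i(z)\notin B_{\alpha}$, and only then does the product of two such polynomials on disjoint variables become an identity of $A$ while each factor remains a nonidentity.

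The genuine gap is the implication $G=S \Rightarrow$ existence of a division algebra form, which you correctly flag as the main obstacle but do not resolve. Your plan --- show $r=1$ in the graded Wedderburn decomposition of $\mathcal{U}_K$ by ``absorbing the elementary matrix factor'' via Galois descent --- is not how the argument goes and, as stated, has no mechanism: being a form of $A$ over $K=\mathrm{Frac}(R)$ says nothing about $r$, and the conditions defining $S$ do not act on the matrix factor directly. What the paper actually proves is that $\mathcal{A}_e$ is a domain (equivalently $\Lambda_e$ is a division algebra after localizing, whence $\Lambda$ is a graded division algebra). This requires: (i) reducing from $e$-homogeneous $k$-polynomials to multihomogeneous, then to multilinear, then to \emph{pure} polynomials; (ii) the key Galois-equivariance computation that a good permutation $Z_{\sigma}$ of a monomial $Z$ evaluated on the $i$th path satisfies $(\hat{Z}_{\sigma})_i = g_i(\alpha_{\sigma})\hat{Z}_i$, so that for a pure polynomial $\sum_{\sigma}b_{\sigma}Z_{\sigma}$ with $b_{\sigma}\in k=\mathbb{Q}(\mu_n)^S$ one gets $\sum_{\sigma}b_{\sigma}g_i(\alpha_{\sigma}) = g_i\bigl(\sum_{\sigma}b_{\sigma}\alpha_{\sigma}\bigr)$, hence vanishing on one path forces vanishing on all paths --- this is precisely where $G=S$ and the minimality of $k$ enter; (iii) a Zariski-density argument (the nonvanishing locus of $P_i p$ is open nonempty, the vanishing locus of $P_i q$ is closed proper) to conclude $pq$ is a nonidentity; and (iv) a separate reduction of the case $d>1$ to $d=1$ by tensoring with a generic division algebra of index $d$. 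None of these steps appears in your proposal, so the central implication remains unproved.
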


Before we proceed with the proofs let us introduce the following terminology.  If $K$ is a subfield of $F$, we say $p \in F\langle X_{G} \rangle$ is a $K$-polynomial if its coefficients are in $K$.
\begin{defn}

Let $\Gamma$ be a $G$-graded $T$-ideal over an algebraically closed field $F$ of characteristic zero and let $K$ be any subfield. We say that $\Gamma$ is \textit{$K$-strongly verbally prime} if $\Gamma$ has no nonzero $G$-homogeneous zero divisors over $K$, that is for any two multilinear $G$-homogeneous $K$-polynomials $p$ and $q$, on disjoint sets of variables, their product $pq$ is in $\Gamma$ if and only if $p$ or $q$ is in $\Gamma$.
\end{defn}

\begin{rem}
Note that with this terminology

\begin{enumerate}
\item
A $T$-ideal $\Gamma$ is \textit{essentially verbally prime} (Definition \ref{essentially verbally prime definition}) if and only if it is $k$-strongly verbally prime where $k$-is its unique minimal field of definition.
\item
A $G$-graded $T$-ideal $\Gamma$ being strongly verbally prime in the sense of Theorem \ref{stronglyverballyprime} if and only if it is $F$-strongly verbally prime where $F$ is an algebraically closed field.
\end{enumerate}
\end{rem}

\begin{proof} {of Theorems \ref{G = S} and \ref{essentially verbally prime}}

We will show the following conditions are equivalent.

\begin{enumerate}
\item
$A$ admits a $G$-graded division algebra form over its $e$-center.
\item
$Id_{G}(A)$ is essentially verbally prime.
\item
$Id_{G}(A)$ is $\mathbb{Q}$-strongly verbally prime.
\item
$G = S$.
\end{enumerate}

$1\Rightarrow 2:$ Let $B$ be a $G$-graded division algebra form of $A$ over its $e$-center $K$, that is there is a field $L$ extending $K$ and $F$ such that $B\otimes_{K}L \cong A\otimes_{F}L$ as $G$-graded algebras. If $k$ is the minimal field of definition of $A$, we have $K \geq k$ and so, if $p$ and $q$ are multilinear homogeneous $k$-polynomials, nonidentities of $A$ with disjoint sets of variables, we have that $p$ and $q$ are nonidentities also of $B$ and so nonzero evaluations on $B$ must be invertible. This implies their product is nonzero and in particular $pq$ is a nonidentity of $B$ and hence of $A$.

$2\Rightarrow 3:$ Clear.

$3\Rightarrow 4:$ We need to show if $P_{A} = \{H, \alpha, \mathfrak{g} = (g_{1},\ldots, g_{n})\}$ is a presentation of $A$ then $H$ is normal, all cosets of $H$ in $G$ appear with equal frequency in the tuple $\mathfrak{g}$ and finally the subgroup $B_{\alpha}$ of $M(H)$ is normalized by $G$. Now, following the proof of Theorem \ref{stronglyverballyprime} in \cite{AljKarasik} we have that if $H$ is not normal in $G$ or the cosets of $H$ in $G$ do not appear with equal frequency in $\mathfrak{g}$ one can construct $\mathbb{Q}$-polynomials, multilinear, $G$-homogeneous nonidentities with disjoint sets of variables whose product is an identity. This contradicts $(3)$.

In order to show $G$ normalizes $B_{\alpha}$ (i.e. $G = S$) we may assume $H$ is normal in $G$ and the $H$-cosets are represented in $\mathfrak{g}$ with equal frequency. Note that in that case we have a well defined action of $G$ on $M(H)$.

By contradiction suppose $g(B_{\alpha}) \nsubseteq B_{\alpha}$ for some $g\in G$ and let $z \in B_{\alpha}$ be such that $g(z) \notin B_{\alpha}$. Applying Proposition \ref{binomial identities and Schur multiplier} there is a monomial $x_{1, h_1}\cdots x_{r, h_r}$, $h_{i} \in H$, and element $\sigma \in Sym(r)$, such that $$z = x_{1, h_1}\cdots x_{r, h_r}\cdot (x_{\sigma(1), h_\sigma(1)}\cdots x_{\sigma(r), h_\sigma(r)})^{-1} \in B_{\alpha}$$ but $$z^{g} = x_{1, h_1}^{g}\cdots x_{r, h_r}^{g}\cdot (x_{\sigma(1), h_\sigma(1)}^{g}\cdots x_{\sigma(r), h_\sigma(r)}^{g})^{-1} \notin B_{\alpha}.$$ It follows that
$$
\beta_{\sigma}(x_{1,h_1},\ldots, x_{r, h_{r}}) = x_{1, h_1}\cdots x_{r, h_r} - x_{\sigma(1), h_\sigma(1)}\cdots x_{\sigma(r), h_\sigma(r)}
$$
is a $H$-graded identity of $F^{\alpha}H$ whereas
$$
x_{1, h_1}^{g}\cdots x_{r, h_r}^{g} - x_{\sigma(1), h_\sigma(1)}^{g}\cdots x_{\sigma(r), h_\sigma(r)}^{g}
$$
is not.

Now, we order the tuple $\mathfrak{g} = (g_{1}=e, g_{2}, \ldots, g_{s})$ as follows. For simplicity it starts with the representative $e$ followed with all coset representatives $g_{i}$ with $g_{i}(z) \in B_{\alpha}$. Then, on the right of the tuple, we put the remaining elements $g_{i}$, namely those with $g_{i}(z) \notin B_{\alpha}$. We arrange equal representatives to be adjacent in $\mathfrak{g}$. Note that in each category, namely coset representatives that leave $z$ in $B_{\alpha}$ and those that do not, there is at least one element, that is $e$ and $g$ respectively.

Now, we construct the Capelli type polynomial, nonidentity, which alternates on the $e$-blocks. We insert the polynomial
$$
\beta_{\sigma, d} = \beta_{\sigma}(R_{d, h_{1}},\ldots, R_{d, h_{r}})
$$
on the right of each segment which corresponds to one of the blocks on the right part of the tuple $\mathfrak{g}$, namely the blocks that correspond to coset representatives $g_{j}$ with $g_{j}(z) \notin B_{\alpha}$. Here $R_{d, h_{i}}$ is the polynomial obtained from Regev's central polynomial for $d \times d$-matrices where all its variables but one are replaced by different $e$-variables and the remaining variable is replaced by an $h_{i} \in H$ variable (see Section $2$ above).

This implies that if we evaluate one of these segments on a block represented by $g_{i}(z) \in B_{\alpha}$, we get zero. So for a nonzero evaluation we are forced to evaluate the segments on the left with the first blocks. This shows the product of several (in fact two) such polynomials, different variables is an identity and so we have nontrivial zero divisors over $\mathbb{Q}$. Again this contradicts $(3)$ and so we are done with the proof of $3\Rightarrow 4$.

$4\Rightarrow 1:$ Assume $4$. Let $\mathcal{A}$ be the relatively free algebra of $A$ over $k$. We claim it is sufficient to show $\mathcal{A}_{e}$ is a domain: Recall the $e$-center of $\mathcal{A}$ has no nontrivial zero divisors in $\mathcal{A}$ and moreover, extending the $e$-center to its field of fractions $E$ yields a $G$-graded simple algebra $\Lambda$ finite dimensional over its $e$-center $E$. But being the $e$-component of $\Lambda$ a domain and finite dimensional over $E$ we have $\Lambda_{e}$ is a division algebra and hence $\Lambda$ is a $G$-graded division algebra. This proves the claim. We refer to the algebra $\Lambda$ as the \textit{generic $G$-graded division algebra} corresponding to $A$ over its field of definition.

In order to show $\mathcal{A}_{e}$ is a domain take $p$ and $q$ $k$-polynomials, $e$-homogeneous   nonidentities of $A$. We need to prove $pq$ is a nonidentity. For this we make some reductions following the approach in \cite{AljKarasik}.

Recall $p \in k\left\langle X_{G}\right\rangle$ is $G$-\textit{homogeneous} if it belongs to $k\left\langle X_{G}\right\rangle_{g}$ for some $g \in G$. We say $p$ is \textit{multihomogeneous } if any variable $x_{i_g}$ appears the same number of times in each monomial of $p$. More restrictive, we recall that $p$ is \textit{multilinear} if it is multihomogeneous and any variable $x_{i_g}$ appears at most once in each monomial of $p$. Note that unless $G$ is abelian, multihomogeneous or multilinear polynomials are not necessarily $G$-homogeneous.

We first reduce the problem to polynomials $p$ and $q$, $e$-homogeneous and multihomogeneous.

To this end we introduce an ordering on the variables which appear either in $p$ or $q$. Then counting variables multiplicities we order lexicographically the monomials in $p$ and $q$ respectively. Let $p=p_1+\ldots +p_r$ and $q=q_1+\dots +q_s$ be the decomposition of $p$ and $q$ into its multihomogeneous constituents where $p_1 < \ldots < p_r$ and $q_1 < \ldots < q_s$. Suppose $pq$ is a $G$-graded identity. It follows that its homogeneous constituent are $G$-graded identities and in particular $p_1q_1$ is a $G$-graded identity. This proves the reduction to multihomogeneous polynomials.

Let $p$ be an $e$-homogeneous, multihomogeneous with coefficients in $k$. Suppose $p$ is a nonidentity of $A$. It is sufficient to prove the following proposition.

\begin{prop}\label{evaluations on the algebra} Evaluations of $p$ on the algebra $A$ yield nonzero values on \textit{every} component of $A_{e}$. More precisely, if we denote by $P_{i}$ the projection $A_{e} \rightarrow A_{e,i}$, then for any $i$ the evaluation set of $P_{i}p$ on $A$ is nonzero.
\end{prop}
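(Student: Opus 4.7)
The plan is to exploit the hypothesis $G=S$ via the admissible moves of type III on the presentation $P_{A}$, using them to transport a nonzero evaluation of $p$ between diagonal $e$-blocks. Because $p$ is $e$-homogeneous and a nonidentity, some evaluation produces a nonzero value $\hat p=\sum_{i}\hat p_{i}\in A_{e}=\bigoplus_{i}A_{e,i}$, so $P_{i_{0}}p$ is already a nonidentity for at least one index $i_{0}$. The content of the proposition reduces to showing that a single such $i_{0}$ forces $P_{j}p$ to be a nonidentity for every block index $j$.

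First I would record that $G$ acts transitively on the set of diagonal $e$-blocks. The blocks are labelled by the distinct right $H$-cosets in $G$ which appear in $\mathfrak g$, and the hypothesis $G=S\subseteq K$ together with Lemma \ref{three equivalent conditions} gives the equal frequency property and left $G$-invariance of the multiset $\Lambda$. In particular every right $H$-coset is represented, and the $G$-action on the blocks is transitive since $G$ acts transitively on $G/H$.

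Next, for each $g\in G$ I would compare the presentations $(H,\alpha,\mathfrak g)$ and $(H,\alpha^{g},g\mathfrak g)$, which both represent $A$ by a type III move. The comparison yields an $F$-algebra isomorphism $\phi_{g}\colon A\to A$ that sends the diagonal $e$-block indexed by $Hg_{i}$ to the block indexed by $Hgg_{i}$, realising a permutation $\pi_{g}$ of blocks. Because $G=S$, the cocycle $\alpha^{g}$ is related to $\alpha$ only by the Galois twist on $\mu_{n}$ corresponding to the image of $g$ under the homomorphism $S\to Gal(\mathbb Q(\mu_{n})/k)$ constructed in Section $2$, and this image is trivial on $k=\mathbb Q(\mu_{n})^{S}$.

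The crux is then to verify that $\phi_{g}$ transports evaluations of $p$ to evaluations of $p$: if $\hat p=p(\hat x_{1},\ldots,\hat x_{N})$ with the $\hat x_{\ell}$ ranging over the standard homogeneous basis of $A$, then $p(\phi_{g}(\hat x_{1}),\ldots,\phi_{g}(\hat x_{N}))=\phi_{g}(\hat p)$. The scalar twists that arise when $\phi_{g}$ is pushed through monomials of $p$ are roots of unity encoding the discrepancy between $\alpha$ and $\alpha^{g}$; on every monomial they combine into the same scalar that the relevant Galois element applies to the coefficient of that monomial. Since the coefficients of $p$ lie in $k$, they are fixed by this Galois element, and the claim follows. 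Transitivity from the first step then lets me pick, for any prescribed $j$, an element $g\in G$ with $\pi_{g}(i_{0})=j$; the resulting evaluation $\phi_{g}(\hat p)\in A_{e,j}$ has nonzero $P_{j}$-component, exhibiting $P_{j}p$ as a nonidentity.

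The principal obstacle will be the bookkeeping in the previous paragraph, namely tracking how the cocycle twist propagates through a multilinear evaluation of $p$ and checking that its net effect on each monomial cancels exactly against the Galois action on its coefficient. The hypothesis $G=S$ is essential in two complementary ways: dropping $G\subseteq K$ would invalidate the block permutation step, since $g$ might fail to normalise $\Lambda$, while keeping $G=K$ but allowing $G\ne S$ would leave a nontrivial character of $\mu_{n}$ lying outside $k$ acting on $p$, destroying the invariance of $p$ under $\phi_{g}$.
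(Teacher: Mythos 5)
Your underlying mechanism is the right one, and it is in fact the same one the paper uses: the elements of $G=S$ act on $\mu_{n}=Im(\pi(\alpha))$ through $Gal(\mathbb{Q}(\mu_{n})/k)$, so a polynomial whose coefficients lie in $k$ cannot distinguish the diagonal $e$-blocks. The paper implements this by reducing to multilinear and then to \emph{pure} polynomials and comparing path evaluations: for good permutations one has $(\hat Z_{\sigma})_{i}=g_{i}(\alpha_{\sigma})\hat Z_{i}$, whence $\sum_{\sigma}b_{\sigma}g_{i}(\alpha_{\sigma})=g_{i}\bigl(\sum_{\sigma}b_{\sigma}\alpha_{\sigma}\bigr)$ precisely because $b_{\sigma}\in k$.

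The gap in your version is the map $\phi_{g}$. As you describe it --- an $F$-algebra isomorphism $A\to A$ carrying the block of $Hg_{i}$ to the block of $Hgg_{i}$ --- it cannot exist when $g$ acts nontrivially on $\mu_{n}$. If it did, your transport argument would prove the proposition for polynomials with arbitrary coefficients in $F$, and that is false: for a suitable monomial $Z$ one can choose $b_{\sigma}\in F$ with $\sum_{\sigma}b_{\sigma}\alpha_{\sigma}=0$ but $\sum_{\sigma}b_{\sigma}g(\alpha_{\sigma})\neq0$, giving a pure polynomial that vanishes identically on one block and not on another. Concretely, the type III move lands you in the model $(H,\alpha^{g},g\mathfrak{g})$, and type I/II moves reindex $g\mathfrak{g}$ back to $\mathfrak{g}$, but you must then cross from $\alpha^{g}$ back to $\alpha$; since $[\alpha^{g}]\neq[\alpha]$ for $g\notin T$, no $H$-degree-preserving $F$-algebra isomorphism $F^{\alpha^{g}}H\to F^{\alpha}H$ accomplishes this. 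What does exist is a $\bar g$-\emph{semilinear} graded automorphism of the $\mathbb{Q}(\mu_{n})$-form of $A$ (equivalently, a $k$-algebra automorphism of that form), and verifying that it intertwines evaluations of $k$-polynomials --- the ``bookkeeping'' you explicitly defer --- is not a routine check but exactly the content of the paper's key lemma $(\hat Z_{\sigma})_{i}=g_{i}(\alpha_{\sigma})\hat Z_{i}$. So the proof as written is incomplete at its decisive step, and internally inconsistent (an honest algebra isomorphism would produce no ``scalar twists'' on monomials). That said, if you carry the semilinear verification out, your packaging is a clean alternative to the paper's pure-polynomial/path-property reduction, and it would treat the case $d>1$ uniformly, which the paper instead defers to the tensor decomposition $A\cong A_{1}\otimes M_{d}(F)$; your closing remarks on why both $G\leq K$ and $G\leq N$ are needed are correct.
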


Let us complete the proof of $4\Rightarrow 1$ using the proposition. We let $\mathfrak{A}_{p,i} = \{z \in A^{n}: P_{i}p(z) \neq 0\}$  and  $\mathfrak{B}_{q,i} = \{z \in A^{n}: P_{i}q(z) = 0\}$. As $\mathfrak{A}_{p,i}$ is nonempty and open, $\mathfrak{B}_{q,i} \neq A^{n}$ and closed, we have $\mathfrak{A}_{p,i} \nsubseteq \mathfrak{B}_{q,i}$. This shows $P_{i}pq=P_ip\cdot P_iq: A^{n} \rightarrow M_{r}(F)$ is a nonzero polynomial function and we are done.

Note that by the reduction above we need to prove the proposition for multihomogeneous, $e$-homogeneous $k$-polynomials.

Claim: \textit{It is sufficient to prove the proposition for $p$ multilinear.}

We assume the proposition holds for multilinear nonidentities and suppose by contradiction $p(x_{1},x_{2}\ldots,x_{n})$ is a nonidentitiy, multihomogeneous, nonmultilinear for which $P_{i}p(x_1,x_2,\ldots,x_n)=0$ for some $i$.
Suppose the variables $x_{1},\ldots,x_{n}$ appear in $p$ with degrees $k_1,\ldots,k_n$. Without loss of generality we may assume $k_1 \geq k_2 \geq \cdots \geq k_n$, and hence $k_{1} > 1$, making the tuple $(k_1,\ldots,k_n)$ a partition of $m=\sum_{s}k_{s}$. Clearly, we may assume the polynomial $p$ is a counter example of minimal degree in the lexicographic ordering determined by the degrees of $x_{1},\ldots,x_{n}$.
Put $x_{1}=t_{1}+\cdots +t_{k_1}$ and let
$$\bar{p}(t_1,\ldots,t_{k_1},x_{2},\ldots,x_{n}) = p((t_1+\ldots + t_{k_1}),x_{2},\ldots,x_{n}).$$
We have that $P_{i}\bar{p}(t_1,\ldots,t_{k},x_{2},\ldots,x_{n})=0$. Furthermore, replacing $x_{1}$ by $t_{j}$ we have also $P_{i}p(t_{j},x_{2},\ldots,x_{n})=0$, for $j=1,\ldots,k_1$, and so $P_{i}(q) = 0$ where

$$q(t_1,\ldots,t_{k_1},x_{2},\ldots,x_{n}) = \bar{p}(t_1,\ldots,t_{k_1},x_{2},\ldots,x_{n}) - \sum_{j=1}^{k_1}p(t_{j},x_{2},\ldots,x_{n}))$$
is a sum of multihomogeneous polynomials of degree $m$ whose multihomogeneous degrees in the lexicographic ordering are strictly smaller than $(k_1,\ldots,k_n)$. Finally, note that the polynomial $p(x_{1},x_{2}\ldots,x_{n})$ is obtained is in the $T$-ideal generated by $q$ and hence the latter is a nonidentity of $A$.

Next we proceed to prove the proposition for multilinear polynomials, first with the extra condition that each $H$ coset is represented only once in $\mathfrak{g}$, that is $d = 1$. To this end we recall here the notions of \textit{good permutations} on multilinear monomials, \textit{pure polynomials} and the \textit{path property} as they were presented in \cite{AljKarasik}. Recall we are assuming $A$ is $G$-graded simple with presentation $P_{A} = \{H, \alpha, \mathfrak{g} = (g_{1},\ldots, g_{n})\}$ and $G = S$. In particular $H$ is normal in $G$. For $g \in G$ denote by $\bar{g}$ its image in $G/H$.
\begin{defn} (see \cite{AljKarasik} Definition $4.13$) \label{Good and Pure}

We say the $G-$graded
multilinear monomial $Z_{\sigma}=x_{\sigma(1),t_{\sigma(1)}}\cdots x_{\sigma(n),t_{\sigma(n)}}$ is a \textit{good permutation}
of $Z=x_{1,t_{1}}\cdots x_{n,t_{n}}$
if
\begin{enumerate}
\item $(\mbox{deg}Z=)t_{1}\cdots t_{n}=t_{\sigma(1)}\cdots t_{\sigma(n)}(=\mbox{deg}Z_{\sigma})$.
\item For every $1\leq i\leq n$, $\overline{t_{1}\cdots t_{i}}=\overline{t_{\sigma(1)}\cdots t_{\sigma(\sigma^{-1}(i))=i}}$.
\end{enumerate}

A $G$-graded multilinear polynomial is said to be \textit{pure} if all its monomials are \textit{good permutations} of each other.
\end{defn}
For the definition of the path property we refer to \cite{AljKarasik} Definition $4.16$ where here, in addition, we assume $d = 1$ (each coset representative appears only once in $\mathfrak{g}$).

Let $p(x_{t_1},\ldots,x_{t_n})$ be a $G$-graded \textit{pure} polynomial.

\begin{defn}

\begin{enumerate}
\item

We say the polynomial $p(x_{t_1},\ldots,x_{t_n})$ satisfies the \textit{path property} if the following condition holds: If $p$ vanishes whenever the evaluation of $x_{t_1}$ has the form $u_{h(i_1)}\otimes e_{i_1,j(i_1)} $ for some index $i_1 \in \{1,\ldots,[G:H]\}$, then $p$ is a $G$-graded identity of $A$.

\item
We say the $G$-graded simple algebra $A$ satisfies the \textit{path property} if any \textit{pure} polynomial satisfies the \textit{path property}.

\end{enumerate}
\end{defn}

The following lemma extends \cite{AljKarasik} Lemma $4.17$.
\begin{lem}
Let $A$ be a $G$-graded simple algebra with presentation $P_{A}$. Suppose $G = S$ and let $k$ be the unique minimal field of definition of $A$. Suppose that the frequency of the representatives of $H$ cosets in $\mathfrak{g}$ is $1$. Then $A$ satisfies the path condition on pure $k$-polynomials, that is, if $p = \sum_{\sigma \in Sym(n)} b_{\sigma} Z_{\sigma}$ is a pure $k$-polynomial, then it vanishes on one path if and only if $p$ is a graded identity of $A$.
\end{lem}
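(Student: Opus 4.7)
The plan is to extend \cite{AljKarasik}, Lemma 4.17, by a Galois-theoretic descent argument. The earlier lemma establishes the path property for polynomials whose coefficients live in a field over which the cocycle $\alpha$ is (cohomologically) defined; the new content here is that \emph{purity} together with the hypothesis $G=S$ allows one to descend the coefficient field all the way to $k=\mathbb{Q}(\mu_{n})^{S}$, because the Galois group $\mathrm{Gal}(\mathbb{Q}(\mu_{n})/k)$ precisely absorbs the discrepancy between the evaluations of $p$ on any two different paths. The reverse implication is of course trivial.

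In detail, I would fix a row index $i_{1}$ on which $p$ vanishes identically (for every compatible choice of the remaining variables), pick any other admissible row index $i_{1}'$, and set out to show that $p$ also vanishes on every compatible extension of an evaluation with first variable in row block $i_{1}'$. Because the frequency of coset representatives in $\mathfrak{g}$ is $1$ and $H\triangleleft G=S$, there is a unique coset $Hg$ with $g\cdot g_{i_{1}}\in Hg_{i_{1}'}$, and left multiplication by a representative $g\in G$ induces a canonical bijection between the $i_{1}$-extensions and the $i_{1}'$-extensions. Because $p$ is pure, on each such path every monomial $Z_{\sigma}$ evaluates to a scalar $c_{\sigma}$ times a single fixed basis element $\tilde{E}$ of $A$, with the scalars $c_{\sigma}$ given by products of cocycle values $\alpha(h_{1},\ldots,h_{r})$ dictated by the good-permutation data (via $u_{h_{1}}\cdots u_{h_{r}}=\alpha(h_{1},\ldots,h_{r})\,u_{h_{1}\cdots h_{r}}$). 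Unpacking the combinatorics of good permutations shows that translating the path by $g$ replaces the cocycle $\alpha$ by its conjugate $\alpha^{g}$, so that the scalars transform as $c_{\sigma}^{(i_{1}')}=\bar{g}(c_{\sigma}^{(i_{1})})$, where $\bar{g}\in\mathrm{Gal}(\mathbb{Q}(\mu_{n})/\mathbb{Q})$ is the image of $g$ under the action defined in the discussion preceding Lemma \ref{different actions on binomials}. The hypothesis $G=S$ is what guarantees that this output stays in $\mathbb{Q}(\mu_{n})$ (i.e.\ $g$ normalises $B_{\alpha}$), and the frequency-$1$ hypothesis is what makes the translation $g$ uniquely determined.

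The vanishing of $p$ on the $i_{1}$-path reads $\sum_{\sigma}b_{\sigma}c_{\sigma}^{(i_{1})}=0$ in $\mathbb{Q}(\mu_{n})$. Applying $\bar{g}$ and using $\bar{g}(b_{\sigma})=b_{\sigma}$ (because $b_{\sigma}\in k=\mathbb{Q}(\mu_{n})^{S}$ and $g\in G=S$) gives $\sum_{\sigma}b_{\sigma}c_{\sigma}^{(i_{1}')}=0$, which is precisely the value of $p$ on the corresponding $i_{1}'$-path. As $i_{1}'$ and the compatible extension were arbitrary, $p$ is a $G$-graded identity of $A$. The chief technical obstacle is the explicit verification of the transformation law $c_{\sigma}^{(i_{1}')}=\bar{g}(c_{\sigma}^{(i_{1})})$: one must show that path translation by $g$ acts on the scalar factor purely through the $S$-action on $\mu_{n}$, without any extraneous twist coming from the matrix-unit coordinates. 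This is where the frequency-$1$ and purity assumptions are essential, since they eliminate ambiguity in selecting representatives and guarantee that all monomials undergo the same coset-trajectory translation.
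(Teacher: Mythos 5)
Your proposal is correct and follows essentially the same route as the paper: both arguments reduce to the transformation law that moving a pure monomial's evaluation from one path to another multiplies its scalar by the image of the translating coset representative under the $S$-action on $\mu_{n}$ (the paper obtains this by citing \cite{AljKarasik}, Proposition 4.20), and then both conclude by applying that Galois element to the vanishing relation $\sum_{\sigma}b_{\sigma}\alpha_{\sigma}=0$ and using $b_{\sigma}\in k=\mathbb{Q}(\mu_{n})^{S}$ together with $G=S$. The step you flag as the chief technical obstacle is exactly the step the paper also does not reprove but imports from the earlier work.
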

\begin{proof} The proof here is an adaptation of the proof in \cite{AljKarasik}. We provide some details.
Let $Z$ be a multilinear monomial of degree $n$ and $Z_{\tau}$ is the monomial obtained from $Z$ by permuting the variables with some $\tau \in Sym(n)$. Choose a nonzero (path) evaluation of $Z$ on basis elements and suppose the value is in the $(i,i)$-block. Then the following hold (see proof of Lemma $4.15$ in \cite{AljKarasik}).

\begin{enumerate}
\item
If $Z_{\tau}$ is a good permutation of $Z$, then the value of $Z_{\tau}$ is nonzero and lies in the $(i,i)$-block.
\item
If $Z_{\tau}$ is not a good permutation of $Z$, then the value of $Z_{\tau}$ lies in the $(j,j)$-block for some $j \neq i$ or is zero.

\end{enumerate}
Next suppose $Z$ and $Z_{\sigma}$ are good permutations of each other and let $\hat{Z}_{1} = u_{e}\otimes e_{1,1}$ denote the evaluation $Z$ on the $1$st path. By the preceding statement, we let $(\hat{Z}_{\sigma})_{1} = \alpha\hat{Z}_{1} = \alpha\cdot u_{e}\otimes e_{1,1}$ be the corresponding evaluation where $\alpha = \alpha_{\sigma}$ is an $n$th root of unity. Let us evaluate the monomials $Z$ and $Z_{\sigma}$ on the $i$th path. Following the proof of (\cite{AljKarasik}, Proposition $4.20$) verbatim we obtain that $(\hat{Z}_{\sigma})_{i} = g_{i}(\alpha_{\sigma})\hat{Z}_{i}$.

We now show that if $p = \sum_{\sigma}b_{\sigma}Z_{\sigma}$ is a pure polynomial with coefficients in $k$ then it satisfies the path property on $A$.
By symmetry we may assume it vanishes on the $1$st path so $\sum_{\sigma}b_{\sigma}\alpha_{\sigma} = 0$. The value on the $i$th path is $\sum_{\sigma}b_{\sigma}g_{i}(\alpha_{\sigma})\hat{Z}_{i}$. But $b_{\sigma} \in k$, every $\sigma$, and so
$$\sum_{\sigma}b_{\sigma}g_{i}(\alpha_{\sigma})\hat{Z}_{i} = g_{i}(\sum_{\sigma}b_{\sigma}\alpha_{\sigma})\hat{Z}_{i} = 0.$$
\end{proof}

We complete now the proof of Proposition \ref{evaluations on the algebra}. Firstly we prove it for pure $k$-polynomials and $e$-homogeneous, and then later we extend the proof to multilinear $k$-polynomials and $e$-homogeneous.

Suppose $p$ is $e$-homogeneous and pure. We know that since $p$ is a $k$-polynomial, if it does not vanish on a certain path, say the $i$th path, it does not vanish on any path $j \in \{1,\ldots, [G:H]\}$. Furthermore, we saw that different paths yield values on different diagonal blocks and so the claim is clear in this case.

Suppose now $p$ is multilinear and let $p = p_1 + \ldots + p_q$ be the decomposition of $p$ into its pure components. Let $Z$ be a monomial of $p_1$ say and suppose it does not vanish on a certain path evaluation. Let $Z_{\sigma}$ be a monomial in $p$ which is obtained from $Z$ by the permutation $\sigma \in S_{n}$. If $Z_{\sigma}$ is a good permutation of $Z$ then they are constituents of the same pure polynomial, the case we considered above. Otherwise, applying the claim in the proof of Lemma 4.16 in \cite{AljKarasik} we know $Z$ and $Z_{\sigma}$ get values in different blocks (this includes the case the evaluation annihilates $Z_{\sigma}$). This establishes the claim in case $p$ is multilinear.

We can now complete the proofs of Theorems \ref{G = S} and \ref{essentially verbally prime}.
\end{proof}

\begin{proof}
We have completed the proofs of $1\Rightarrow 2 \Rightarrow 3 \Rightarrow 4$ whereas the implication $4\Rightarrow 1$ was shown under the condition $d=1$. Let us prove $4\Rightarrow 1$, $d > 1$, assuming the case $d=1$. Write the $G$-graded simple algebra $A$ as the tensor product of $A_{1} \otimes M_{d}(F)$ where the grading on $M_{d}(F)$ is trivial. Applying the case $d=1$ we know $A_{1}$ has a $G$-graded division algebra form $B_{1}$. In order to complete the proof we need to find a division algebra form for the algebra of $d\times d$- matrices such that tensoring with $(B_{1})_{e}$ yields a division algebra. This can be done by taking a generic division algebra of index $d$ whose variables are disjoint to the variables appearing in $(B_{1})_{e}$. Details are omitted.
This completes the proof of implication $4\Rightarrow 1$ and therefore the proofs of Theorems \ref{G = S} and \ref{essentially verbally prime} are now complete.

\end{proof}

Final remarks: It is natural to ask whether some of the statements presented in this article may be extended to $G$-graded simple algebras over fields of characteristic $p > 0$. Let us make some comments.

Bahturin, Sehgal and Zaicev' characterization of $G$-graded simple algebras requires a Maschke's type condition, namely that $p$ does not divide the order of $G$ (\cite{BSZ}). In fact it would be sufficient to assume $p$ does not divide the order of $H$ where as above, $H$ is the group appearing in the presentation $P_{A}= \{H, \alpha, \mathfrak{g} = (g_{1},\ldots,g_{n})\}$. We note that it would be necessary to make such an assumption since a twisted group algebra $F^{\alpha}H$, $F$ algebraically closed (or more generally, if $F$ is perfect) of characteristic $p$ is semsimple if and only if the order of $H$ is prime to $p$ (see \cite{AljRob}, Theorem $2$).
Another point which should be addressed is the extension (or possibly \textit{replacement}) of Kemer's representability theorem or what is also referred in the literature as \textit{Kemer's finite dimensionality theorem} for $G$-graded algebras over fields of characteristic zero (every affine PI algebra and $G$-graded is PI equivalent to a finite dimensional algebra $G$-graded algebra  \cite{AB}) to $G$-graded algebras over fields of positive characteristic. For instance we apply Kemer's finite dimensionality theorem to prove Part $3$ of Theorem \ref{main theorem}, that is, to show the algebra $\mathcal{U}$ is Azumaya in the sense of Artin-Procesi. It is known that Kemer's finite dimensionality theorem holds for ungraded affine PI algebras over infinite fields however it is false for algebras over finite fields. In the latter case we have that an affine relatively free algebra can be embedded into a Noetherian module over its centroid. For the treatment of representability and Specht problem for (ungraded) algebras over fields of positive characteristic we refer the reader to articles of Belov and Belov, Rowen and Vishne (see \cite{Belov}, \cite{BelovRoweVishne1}, \cite{BelovRoweVishne2}, \cite{BelovRoweVishne3}).

\end{document}